\newtheorem{thm}{Theorem}[section]
\newtheorem{lem}[thm]{Lemma}
\newtheorem{prop}[thm]{Proposition}
\theoremstyle{definition}
\newtheorem{defn}[thm]{Definition}
\newtheorem{rem}[thm]{Remark}
\newtheorem{ex}[thm]{Example}
\newcommand{\ch}{\mathrm{ch_{BC}}}
\newcommand{\chr}{\mathrm{ch_{BC,\mathbb{Q}}}}
\newcommand{\DC}{\mathrm{D^{b}_{coh}}}
\newcommand{\CO}{\mathcal{O}}
\newcommand{\SE}{\mathscr{E}}
\newcommand{\CE}{\mathcal{E}}
\newcommand{\CF}{\mathcal{F}}
\newcommand{\CG}{\mathcal{G}}
\newcommand{\C}{\mathcal{C}^{\infty}}
\newcommand{\BC}{\hat{c}}
\numberwithin{equation}{section}
\def\@tocline#1#2#3#4#5#6#7{\relax
  \ifnum #1>\c@tocdepth 
  \else
    \par \addpenalty\@secpenalty\addvspace{#2}%
    \begingroup \hyphenpenalty\@M
    \@ifempty{#4}{%
      \@tempdima\csname r@tocindent\number#1\endcsname\relax
    }{%
      \@tempdima#4\relax
    }%
    \parindent\z@ \leftskip#3\relax \advance\leftskip\@tempdima\relax
    \rightskip\@pnumwidth plus4em \parfillskip-\@pnumwidth
    #5\leavevmode\hskip-\@tempdima
      \ifcase #1
       \or\or \hskip 1em \or \hskip 2em \else \hskip 3em \fi%
      #6\nobreak\relax
    \dotfill\hbox to\@pnumwidth{\@tocpagenum{#7}}\par
    \nobreak
    \endgroup
  \fi}
\begin{document}

\title{Bott--Chern characteristic classes of blow-ups}

\author[X. Wu]{Xiaojun Wu}
\address{Laboratoire J.-A. Dieudonn\'{e}, U.M.R. 7351, Universit\'{e} C\^{o}te d'Azur, Parc Valrose 06108 Nice Cedex 02, France}%
\email{Xiaojun.WU@univ-cotedazur.fr}%

\author[S. Yang]{Song Yang}
\address{Center for Applied Mathematics and KL-AAGDM, Tianjin University, Tianjin 300072, P. R. China}%
\email{syangmath@tju.edu.cn}%

\author[X. Yang]{Xiangdong Yang}
\address{Department of Mathematics, Lanzhou University, Lanzhou 730000, P.R. China}
\email{yangxd@lzu.edu.cn}

\date{\today}


\begin{abstract}
We prove a blow-up formula for Bott--Chern characteristic classes of compact complex manifolds.
To this end, we establish a version of Riemann--Roch without denominators for the Bott--Chern characteristic classes.
In particular, as an application, we study the behaviour of the Bott--Chern characteristic classes of the Iwasawa manifold under a blow-up transformation.
\end{abstract}

\subjclass[2010]{32Q55; 14E05; 14C17}

\keywords{Bott--Chern cohomology, Characteristic classes, Blow-ups}
\maketitle
\setcounter{tocdepth}{1}
\tableofcontents
\section{Introduction}
In algebraic geometry, one of the most important geometric operations is the blow-up which plays a significant role in birational geometry.
From the viewpoint of birational geometry, it is a natural problem to compare the Chern classes or other characteristic classes of an algebraic variety under a blow-up transformation.
This problem was first propounded by Todd  \cite{Tod38} and further studied by Todd \cite{Tod38a, Tod41, Tod57},  Segre \cite{Seg54} and van de Ven \cite{van56}.
Here the Chern classes of an algebraic variety are understood as elements in the Chow group of the algebraic variety.
For non-singular irreducible projective algebraic varieties over an algebraically closed field of characteristic zero,
the general blow-up formula of Chern classes was proved by Porteous \cite{Por60} using Grothendieck--Riemann--Roch theorem (GRR), and the proof by Porteous only valid modulo torsion in positive characteristic until Riemann--Roch without denominators was proved.
In \cite{LS75, LS78}, Lascu--Scott gave an alternative proof for the blow-up formula of Chern classes without using GRR.
This classical blow-up formula for Chern classes has been generalized to the cases of complex manifolds by Atiyah--Hirzebruch \cite{AH62}, symplectic manifolds by Geiges--Pasquotto \cite{GP07}, blow-ups of singular varieties with regularly embedding centers by Aluffi \cite{Alu10}, and weighted blow-ups by  Mus\c{t}at\v{a}--Mus\c{t}at\v{a} \cite{MM12} and  Abramovich--Arena--Obinna \cite{ASA23}, etc.

Consider a compact complex manifold $Y$ with complex dimension $n$.
Then we have a natural double complex $(\Omega^{\bullet,\bullet}(Y,\mathbb{C}), \partial^{Y}, \overline{\partial}^{Y})$ called the Dolbeault double complex of $Y$.
Let $E$ be a holomorphic vector bundle over $Y$.
In 1965, Bott--Chern \cite{BC65} proved a double transgression formula which says that the Chern forms $c(E, h)$ and $c(E, h^{\prime})$ corresponding to different Hermitian metrics $h$ and $h^{\prime}$ satisfy the equation
$$c(E, h)-c(E, h^{\prime})=\partial^{Y}\overline{\partial}^{Y}\lambda$$
for some $\lambda$.
This leads to the definition of Bott--Chern characteristic class (or refined Chern class) of the holomorphic vector bundle $E$ which lies in the quotient space
$$
H^{\bullet,\bullet}_{BC}(Y):=\frac{\ker\partial^{Y}\,\cap\,\ker\overline{\partial}^{Y}}{\mathrm{im}\partial^{Y}\overline{\partial}^{Y}}.
$$
Now we call $H^{\bullet, \bullet}_{BC}(Y)$ the Bott--Chern cohomology of $Y$.
It is worth noting that the Bott--Chern cohomology refines the de Rham cohomology.
If $Y$ is a compact K\"{a}hler manifold or more generally satisfies the $\partial^{Y}\overline{\partial}^{Y}$-Lemma, then the Bott--Chern cohomology of $Y$ is canonically isomorphic to its Dolbeault cohomology.
In general,  this is not the case and Bott--Chern cohomology has become a very useful tool in complex geometry, see \cite{Sch07, AT13, Bis13, Qia16, RYY19, RYY20, CGL20, YY20, Ste21, Ste21b, Wu23, YY23, BR23} etc.

Akin to birational geometry, in complex geometry, we consider the bimeromorphic transformations of compact complex manifolds.
A blow-up of a given complex manifold with a smooth center is the simplest form of a bimeromorphic transformation.
Particularly, on account of the weak factorization theorem \cite{AKMW02}, each bimeromorphic map between compact complex manifolds is the composition of finite blow-ups and blow-downs with smooth centers.
From the viewpoint of bimeromorphic geometry, it is natural to study the behavior of the Bott--Chern characteristic classes of a compact complex manifold under a blow-up.
Based on the GRR for coherent sheaves on complex manifolds attributed to Bismut--Shen--Wei \cite{BSW23}, we prove the Riemann--Roch without denominators (Theorem \ref{RR-domi}) and then we establish the blow-up formula of Bott--Chern characteristic classes.

\begin{thm}\label{thm1}
Assume that $Y$ is a compact complex manifold of complex dimension $n$.
Let $\imath:X\hookrightarrow Y$ be a closed complex submanifold of the complex codimension $r\geq2$ and let $\pi:\widetilde{Y}\rightarrow Y$ be the blow-up of $Y$ with the center $X$ and the exceptional divisor $E$.
Denote by $N$ the normal bundle of $X$ in $Y$ and $\zeta$ the first Bott--Chern class of the line bundle $\CO_{E}(1)$.
Then the Bott--Chern characteristic classes satisfies the identity
\begin{equation*}
\hat{c}(\widetilde{Y})-\pi^{\ast}\hat{c}(Y)
=\jmath_{\ast}
\bigl( \rho^{\ast}\hat{c}(X)\cdot\alpha\bigr),
\end{equation*}
where $\jmath:E\hookrightarrow \widetilde{Y}$ is the inclusion, $\rho=\pi|_{E}:E\rightarrow X$, and
$$\alpha= \frac{1}{\zeta}\cdot\biggl[\sum^{r}_{i=0}\rho^{\ast}\hat{c}_{r-i}(N)
-(1-\zeta)\sum^{r}_{i=0}(1+\zeta)^{i}\rho^{\ast}\hat{c}_{r-i}(N)\biggr].
$$
\end{thm}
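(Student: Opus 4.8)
The plan is to adapt Porteous's argument for Chern classes in the Chow ring \cite{Por60} (in the form used by Atiyah--Hirzebruch \cite{AH62} for de Rham classes), with the Grothendieck--Riemann--Roch input replaced by its Bott--Chern counterpart, Theorem \ref{RR-domi}. Write $\hat c(Y)=\hat c(T_{Y})$ and $\hat c(\widetilde Y)=\hat c(T_{\widetilde Y})$. The geometric input is the classical exact sequence of coherent sheaves on $\widetilde Y$ comparing the holomorphic tangent bundles,
\begin{equation*}
0\longrightarrow T_{\widetilde Y}\longrightarrow \pi^{*}T_{Y}\longrightarrow \jmath_{*}Q\longrightarrow 0,
\end{equation*}
in which the first map is the differential $d\pi$ and $Q=\rho^{*}N/\CO_{E}(-1)$ is the universal quotient bundle on $E=\mathbb{P}(N)$, with $\CO_{E}(-1)\hookrightarrow\rho^{*}N$ the tautological subbundle; recall $N_{E/\widetilde Y}=\CO_{E}(-1)$, so $\hat c_{1}(N_{E/\widetilde Y})=-\zeta$. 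One checks this sequence in the standard local model (the blow-up of $\mathbb{C}^{n-r}\times\{0\}$ in $\mathbb{C}^{n}$), where $d\pi$ is visibly injective as a sheaf map with cokernel $\jmath_{*}(\CO_{E}^{r}/\CO_{E}(-1))$. Since the total Bott--Chern Chern class is multiplicative on short exact sequences of coherent sheaves, it follows that $\hat c(\widetilde Y)=\pi^{*}\hat c(Y)\cdot\hat c(\jmath_{*}Q)^{-1}$, and hence $\hat c(\widetilde Y)-\pi^{*}\hat c(Y)=\pi^{*}\hat c(Y)\cdot\bigl(\hat c(\jmath_{*}Q)^{-1}-1\bigr)$. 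So everything reduces to computing $\hat c(\jmath_{*}Q)$.

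For this I would apply Theorem \ref{RR-domi} to the divisorial embedding $\jmath\colon E\hookrightarrow\widetilde Y$: its codimension-one case expresses the Riemann--Roch-without-denominators class as $\hat c(\jmath_{*}Q)=1+\jmath_{*}\beta$ with
\begin{equation*}
\beta=\frac{\hat c(Q\otimes\CO_{E}(1))-\hat c(Q)}{\zeta\cdot\hat c(Q\otimes\CO_{E}(1))}.
\end{equation*}
To invert $1+\jmath_{*}\beta$, I would use the projection formula together with the self-intersection identity $\jmath^{*}\jmath_{*}(-)=\hat c_{1}(N_{E/\widetilde Y})\cdot(-)=-\zeta\cdot(-)$; these collapse the geometric series $\sum_{k\geq0}(-\jmath_{*}\beta)^{k}$ to a single Gysin term, $\hat c(\jmath_{*}Q)^{-1}-1=-\jmath_{*}\bigl(\beta/(1-\zeta\beta)\bigr)$. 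A short computation gives $1-\zeta\beta=\hat c(Q)/\hat c(Q\otimes\CO_{E}(1))$, whence $\beta/(1-\zeta\beta)=\bigl(\hat c(Q\otimes\CO_{E}(1))-\hat c(Q)\bigr)/\bigl(\zeta\cdot\hat c(Q)\bigr)$.

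It then remains to substitute back and push forward. Since $\pi\circ\jmath=\imath\circ\rho$ one has $\jmath^{*}\pi^{*}\hat c(Y)=\rho^{*}\imath^{*}\hat c(Y)=\rho^{*}\hat c(X)\cdot\rho^{*}\hat c(N)$, the last equality from the normal bundle sequence $0\to T_{X}\to\imath^{*}T_{Y}\to N\to 0$. The projection formula then gives
\begin{equation*}
\hat c(\widetilde Y)-\pi^{*}\hat c(Y)=-\jmath_{*}\Bigl(\rho^{*}\hat c(X)\cdot\rho^{*}\hat c(N)\cdot\tfrac{\beta}{1-\zeta\beta}\Bigr)=\jmath_{*}\bigl(\rho^{*}\hat c(X)\cdot\alpha\bigr),
\end{equation*}
with $\alpha=-\rho^{*}\hat c(N)\cdot\beta/(1-\zeta\beta)$. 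Feeding in the tautological sequence $0\to\CO_{E}(-1)\to\rho^{*}N\to Q\to 0$ (which yields $\hat c(\rho^{*}N)=(1-\zeta)\hat c(Q)$ and, after twisting by $\CO_{E}(1)$, $\hat c(Q\otimes\CO_{E}(1))=\hat c(\rho^{*}N\otimes\CO_{E}(1))$) turns this into $\alpha=\zeta^{-1}\bigl[\hat c(\rho^{*}N)-(1-\zeta)\hat c(\rho^{*}N\otimes\CO_{E}(1))\bigr]$; expanding $\hat c(\rho^{*}N\otimes\CO_{E}(1))=\sum_{i=0}^{r}(1+\zeta)^{i}\rho^{*}\hat c_{r-i}(N)$ by the splitting principle and noting $\hat c(\rho^{*}N)=\sum_{i=0}^{r}\rho^{*}\hat c_{r-i}(N)$ recovers precisely the expression for $\alpha$ in the statement.

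The formal manipulation above is routine; the real difficulty, which is exactly what the preliminary sections and Theorem \ref{RR-domi} are set up to handle, is to make each step legitimate at the level of Bott--Chern cohomology: that $\hat c$ is defined for coherent sheaves and is multiplicative on short exact sequences (so that the non-locally-free sheaf $\jmath_{*}Q$ may be inserted into the tangent sequence), and that the Gysin pushforward $\jmath_{*}$, the pullbacks $\pi^{*}$ and $\jmath^{*}$, the projection formula, and the self-intersection formula are all available for Bott--Chern classes. Once that framework is in place, the codimension-one case of Theorem \ref{RR-domi} is the only substantive ingredient, and the rest is bookkeeping with the splitting principle.
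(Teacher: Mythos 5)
Your proposal is correct, and it follows the same Porteous-style template as the paper: feed the tangent sheaf sequence $0\to T_{\widetilde Y}\to\pi^{*}T_{Y}\to\jmath_{*}\mathcal{Q}_{E}\to 0$ into the Whitney formula, evaluate $\hat c(\jmath_{*}\mathcal{Q}_{E})$ by Riemann--Roch without denominators for the divisorial embedding $\jmath$, and then reduce $\alpha$ to the stated form via the tautological sequence and the splitting principle. The one place where you genuinely diverge is in the middle bookkeeping. The paper isolates the difference as $\pi^{*}\hat c(Y)-\hat c(\widetilde Y)=\jmath_{*}\bigl(\jmath^{*}\hat c(\widetilde Y)\cdot f(\CO_{E}(-1),Q_{E})\bigr)$ and then computes $\jmath^{*}\hat c(\widetilde Y)=\hat c(\rho^{*}N\otimes\CO_{E}(1))\cdot\rho^{*}\hat c(X)\cdot(1-\zeta)$ from two further exact sequences (the relative Euler sequence of $E=\mathbb{P}(N)$ and $0\to TE\to\jmath^{*}T\widetilde Y\to\CO_{E}(-1)\to 0$, parts (i)--(ii) of its Lemma on exact sequences). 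You instead factor through $\pi^{*}\hat c(Y)$, which forces you to invert $1+\jmath_{*}\beta$; you do this by collapsing the geometric series with the self-intersection identity $\jmath^{*}\jmath_{*}=(-\zeta)\cdot(-)$ (available as the appendix's self-intersection formula for Bott--Chern cohomology) and the projection formula, obtaining $\hat c(\jmath_{*}\mathcal{Q}_{E})^{-1}-1=-\jmath_{*}\bigl(\beta/(1-\zeta\beta)\bigr)$, and then you only need $\jmath^{*}\pi^{*}\hat c(Y)=\rho^{*}\hat c(X)\cdot\rho^{*}\hat c(N)$ from the normal bundle sequence on $X$. The trade is clean: your route dispenses with the two Euler-type sequences on $E$ at the cost of invoking the self-intersection formula; the paper's route avoids the inversion of a Gysin class. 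I verified the algebra on both sides (in particular $1-\zeta\beta=\hat c(Q_{E})/\hat c(Q_{E}\otimes\CO_{E}(1))$ and $\rho^{*}\hat c(N)=(1-\zeta)\hat c(Q_{E})$), and both arrive at the same $\alpha$, so your argument is a valid alternative organization of the same proof.
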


In \cite{Sch07}, analogous to Deligne cohomology, Schweitzer introduced a new hypercohomology description of the Bott--Chern cohomology and defined the integral (resp. rational) Bott--Chern cohomology of complex manifolds.
Recently, by the axiomatic approach of Grivaux \cite{Gri10}, the first named author defined the Chern classes of coherent sheaves in rational Bott--Chern cohomology (cf. \cite{Wu23}).
With the same setting as in Theorem \ref{thm1}, we show that the blow-up formula for rational Bott--Chern characteristic classes is valid (Theorem \ref{thm2}).

The paper is organized as follows.
In Section \ref{sec2}, we review the definition of Chern character of coherent sheaves on complex manifolds.
In Section \ref{sec3}, we describe the Bott--Chern characteristic classes of coherent sheaves.
In Section \ref{sec4}, we give the proofs of the Riemann--Roch without denominators and Theorem \ref{thm1}.
In Section \ref{sec5},  we show that the blow-up formula for rational Bott--Chern characteristic classes holds.
In Section \ref{sec6}, we compute the Bott--Chern characteristic classes of blowing up complex surfaces and complex threefolds.
In Appendix \ref{app}, we collect some key formulae on Bott--Chern characteristic classes.

\subsection*{Acknowledgement}
Most of this work was completed during the third named author's visit at the Chern Institute of Mathematics (CIM) in May and June 2024.
He would like to thank Professor Huitao Feng for hosting his visit and sincerely thank the CIM for the hospitality and the excellent working environment.
This work is supported by the \lq\lq Excellence Fellowships for Young Researchers" provided by IdEx of Universit\'{e} C\^{o}te d'Azur, the Visiting Scholars Program of CIM and the National Nature Science Foundation of China (Grant Numbers:12171351, 12271225).

\section{Chern characters of coherent sheaves}\label{sec2}

In this section, we review the construction of Chern character for coherent sheaves on complex manifolds, which are understood as elements in real Bott--Chern cohomology.
We follow the notations in \cite{BSW23}, see also \cite{Qia16, BR23}.
Throughout of this section we assume that $X$ is a compact complex manifold of complex dimension $n$.
For simplicity, we fix some notations for later use:
\begin{itemize}
\item[--] $\mathscr{A}\widehat{\otimes}\mathscr{A^{\prime}}$ = the tensor product of $\mathbb{Z}_{2}$-graded algebras $\mathscr{A}$ and $\mathscr{A}^{\prime}$ which is also $\mathbb{Z}_{2}$-graded.

\item[--] $\overline{T^{\ast}X}$ = the antiholomorphic cotangent bundle of $X$;

\item[--] $\CO_{X}$ = the structure sheaf of $X$;

\item[--] $\Omega^{p,q}(X,\mathbb{C})$ = the space of complex differential $(p,q)$-forms;

\item[--] $\Omega^{p,p}(X,\mathbb{R})$ = the space of real differential $(p,p)$-forms;

\item[--] $\Omega^{(=)}(X,\mathbb{R})=\bigoplus^{n}_{p=0}\Omega^{p,p}(X,\mathbb{R})$;

\item[--] $\mathfrak{D}^{(=)}(X,\mathbb{R})$ = the space of real currents on $X$;

\item[--] $[Z]$ = the current of integration over a complex submanifold $Z\subset X$;

\item[--] $\DC(X)$ = the derived category of bounded complexes of $\CO_{X}$-modules which have coherent cohomology.
\end{itemize}
The real Bott--Chern cohomology of $X$ is defined to be
$$
H^{(=)}_{BC}(X,\mathbb{R})=\bigoplus_{0\leq p\leq n}H^{p,p}_{BC}(X,\mathbb{R}),
$$
where
$$
H^{p,p}_{BC}(X,\mathbb{R})=\frac{\Omega^{p,p}(X,\mathbb{R})\cap\ker\,d^{X}}{\Omega^{p,p}(X,\mathbb{R})\cap\mathrm{Im}\partial^{X}\overline{\partial}^{X}}.
$$

We begin with the definition of antiholomorphic superconnections introduced by Block \cite{Blo10}, and we follow the notations in \cite[Chapter 5]{BSW23}.
For any integers $r^{\prime}$ and $r$ with $r^{\prime}<r$, let $E=\bigoplus^{r}_{i=r^{\prime}}E^{i}$ be a complex $\mathbb{Z}$-graded vector bundle on $X$.
Assume that $E$ is a left $\wedge(\overline{T^{\ast}X})$-module and hence each antiholomorphic $p$-form $\alpha\in\wedge^{p}(\overline{T^{\ast}X})$ acts on $E$ as a morphism of degree $p$.
For every $0\leq p\leq n$, set
$F^{p}E=\wedge^{p}(\overline{T^{\ast}X})E$.
Then we get a decreasing filtration
\begin{equation*}
F^{0}E=E\supset F^{1}E\supset\cdots\supset F^{n}E\supset F^{n+1}E=0.
\end{equation*}
Set $E^{p,q}_{0}=F^{p}E^{p+q}/F^{p+1}E^{p+q}$ and $D^{q}=E^{q}/F^{1}E^{q}$ for any $r^{\prime}\leq q\leq r$.
As a result, there exist a non-canonical splitting of vector bundles
\begin{equation}\label{no-ca-spl}
E^{q}=D^{q}\oplus F^{1}E^{q}
\end{equation}
and a non-canonical isomorphism of $\mathbb{Z}$-graded $\wedge(\overline{T^{\ast}X})$-modules
\begin{equation}\label{no-ca-iso}
E\cong E_{0}.
\end{equation}
We call the $\mathbb{Z}$-graded vector bundle $D=\bigoplus^{r}_{q=r^{\prime}}D^{q}$ the \emph{diagonal vector bundle} associated with $E$.
For any $r^{\prime}\leq i\leq r$, let $E^{i}_{0}=\bigoplus_{p+q=i}E^{p,q}_{0}$ and $E_{0}=\bigoplus^{r}_{i=r^{\prime}}E^{i}_{0}$.
It is noteworthy that, as $\mathbb{Z}$-graded filtered vector bundles over $X$, $E_{0}$ and $D$ has the same properties as $E$.
A \emph{generalized Hermitian metric} on $D$ is defined to be a smooth section $h=\sum^{2n}_{i=0}h_{i}$ of the $\mathbb{Z}$-graded vector bundle  $\wedge(T^{\ast}_{\mathbb{C}}X)\otimes\mathrm{Hom}(D, \overline{D}^{\ast})$ which is of degree 0 and self-adjoint and satisfies the condition that $h_{0}$ is a $\mathbb{Z}$-graded Hermitian metric on $D$ (cf. \cite[Definition 4.4.1]{BSW23}).

From definition, the space of smooth sections of $E$, denoted by $\C(X, E)$, is a $\mathbb{Z}$-graded vector space over $\mathbb{C}$.
By an \emph{antiholomorphic flat superconnection} on $E$, we mean a differential operator $A^{E\prime\prime}$ acting on $\C(X, E)$ of degree 1 such that:
\begin{itemize}
\item[(i)] $A^{E\prime\prime}(\alpha s)=(\overline{\partial}^{X}\alpha)s+(-1)^{\mathrm{deg}\alpha}\alpha A^{E\prime\prime} s$, for any $\alpha\in\Omega^{0,\ast}(X, \mathbb{C})$ and $s\in\C(X,E)$;
\item[(ii)] the identity $A^{E\prime\prime}\circ A^{E\prime\prime}=0$ holds.
\end{itemize}
For the sake of simplicity, we call $\mathscr{E}=(E, A^{E\prime\prime})$ an antiholomorphic superconnection on $X$.
We can define the morphism between two different antiholomorphic superconnections on $X$ and its cone, the pullback of an antiholomorphic superconnection on $X$ with respect to a holomorphic map $f:Y\rightarrow X$, and the tensor product of antiholomorphic superconnections (cf. \cite[\S 5.6-\S 5.8]{BSW23}).
Particularly, all antiholomorphic superconnections on $X$ form a differential graded category, denoted by $\mathrm{B_{dg}}(X)$ (cf. \cite[\S 6.2]{BSW23}).

Under the splitting \eqref{no-ca-spl} and the isomorphism \eqref{no-ca-iso}, $A^{E\prime\prime}$ can be considered as an antiholomorphic superconnection $A^{E_{0}\prime\prime}$ on $E_{0}$ and we obtain an isomorphism of $\mathbb{Z}$-graded vector bundles
$$
\wedge(T^{\ast}X)\widehat{\otimes} E_{0}\cong\wedge(T^{\ast}X)\widehat{\otimes} E\cong\wedge(T^{\ast}_{\mathbb{C}}X)\widehat{\otimes} D.
$$
Denote by $\Omega(X, D)$ the space of smooth sections of $ \wedge(T^{\ast}_{\mathbb{C}}X)\widehat{\otimes} D$ and then we get an identity
$$
\Omega(X, D)=\C(X, \wedge(T^{\ast}X)\widehat{\otimes} E).
$$
The tensor product of the trivial antiholomorphic superconnection $\overline{\partial}^{X}$ on $\wedge(T^{\ast}_{\mathbb{C}}X)$ and $A^{E_{0}\prime\prime}$ gives rise to an antiholomorphic superconnection on $\wedge(T^{\ast}X)\widehat{\otimes} E_{0}$ and we still write it as $A^{E_{0}\prime\prime}$ for the simplicity.
Let $h$ be a generalized Hermitian metric on $D$.
The metric $h$ induces a non-degenerate Hermitian form $\theta_{h}$ on $\Omega(X, D)$ (cf. \cite[Definition 7.1.3]{BSW23}).
Let $A^{E_{0}\prime}$ be the formal adjoint of $A^{E_{0}\prime\prime}$ with respect to $\theta_{h}$.
Then $A^{E_{0}}=A^{E_{0}\prime}+A^{E_{0}\prime\prime}$ is an antiholomorphic superconnection on $E_{0}$ and the curvature of the superconnection $A^{E_{0}}$ is given by
$A^{E_{0},2}=[A^{E_{0}\prime}, A^{E_{0}\prime\prime}]$ which is a smooth section of the super vector bundle $\wedge(T^{\ast}_{\mathbb{C}}X)\widehat{\otimes}\mathrm{End}(D)$.
Recall the definition of supertrace.
Let $V$ be a superspace with the involution $\tau:V\rightarrow V$ defining its $\mathbb{Z}_{2}$-grading
 $V=V_{+}\oplus V_{-}$.
For any $A\in\mathrm{End}(V)$, the supertrace of $A$ is defined to be $\mathrm{Tr_{s}}[A]=\mathrm{Tr}[\tau A]$.
Note that $\wedge(T^{\ast}_{\mathbb{C}}X)\widehat{\otimes}\mathrm{End}(D)$ is a super vector bundle over $X$.
We can extend the supertrace $\mathrm{Tr_{s}}$ to a  morphism
$$
\mathrm{Tr_{s}}:\wedge(T^{\ast}_{\mathbb{C}}X)\widehat{\otimes}\mathrm{End}(D)\longrightarrow \wedge(T^{\ast}_{\mathbb{C}}X).
$$
In particular, we have the following definition of Chern character form (cf. \cite[Definition 8.1.1]{BSW23}).
\begin{defn}
The differential form
\begin{equation}\label{c-t-z-form}
\mathrm{ch}(A^{E_{0},2}, h)=
\mathrm{Tr_{s}}
\biggl[\exp\biggl(\frac{\sqrt{-1}}{2\pi}A^{E_{0},2}\biggr)\biggr]
\end{equation}
is called the \emph{Chern character form} corresponding to the generalized metric $h$.
\end{defn}

In conclusion, the Chern character forms have the following properties (cf. \cite[Theorem 8.1.2 and Theorem 8.8.1]{BSW23}).
\begin{thm}\label{chern-cat}
The Chern character form $\mathrm{ch}(A^{E_{0},2}, h)$ satisfies the conditions:
\begin{itemize}
  \item [(i)] it is a $d^{X}$-closed form which lies in $\Omega^{(=)}(X,\mathbb{R})$;
  \item [(ii)] if $\underline{h}$ is another metric on $D$ and $\underline{A}^{E_{0},2}$ its curvature, then there exists a differential form $\gamma\in\Omega^{(=)}(X,\mathbb{R})$ such that
      $$
      \mathrm{ch}(A^{E_{0},2}, h)-
      \mathrm{ch}(\underline{A}^{E_{0},2}, \underline{h})=\partial^{X}\overline{\partial}^{X}\gamma;
      $$
  \item [(iii)] the cohomology class of the Chern character form
     $$
      \mathrm{ch_{BC}}(A^{E\prime\prime}):=\{\mathrm{ch}(A^{E_{0},2}, h)\}\in H^{(=)}_{BC}(X, \mathbb{R})
      $$
      depends only on $A^{E\prime\prime}$;
  \item [(iv)] the class $\mathrm{ch_{BC}}(A^{E\prime\prime})$ is independent with the choice of the splitting \eqref{no-ca-spl} and depends only on the isomorphism class of $\mathscr{E}$ in $\mathrm{D^{b}_{coh}}(X)$.
\end{itemize}
\end{thm}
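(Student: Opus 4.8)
The plan is to adapt classical Chern--Weil theory to the antiholomorphic superconnection formalism, following the strategy of Bismut's construction of Bott--Chern secondary forms. Throughout, write $A^{E_{0}}=A^{E_{0}\prime}+A^{E_{0}\prime\prime}$, with curvature $A^{E_{0},2}=[A^{E_{0}\prime},A^{E_{0}\prime\prime}]$, and rely on two structural identities available in this setting: the Leibniz rule $d^{X}\mathrm{Tr_{s}}[\phi]=\mathrm{Tr_{s}}[[A^{E_{0}},\phi]]$ for any smooth section $\phi$ of $\wedge(T^{\ast}_{\mathbb{C}}X)\widehat{\otimes}\mathrm{End}(D)$, and the Bianchi identity $[A^{E_{0}},A^{E_{0},2}]=0$. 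For \textup{(i)}, I would apply these with $\phi=\exp(\tfrac{\sqrt{-1}}{2\pi}A^{E_{0},2})$ to obtain $d^{X}$-closedness at once. That the form lies in $\Omega^{(=)}(X,\mathbb{R})$ I would read off from the bidegree bookkeeping particular to this formalism: $A^{E_{0}\prime\prime}$ strictly raises the antiholomorphic degree and its $\theta_{h}$-adjoint $A^{E_{0}\prime}$ strictly raises the holomorphic degree, so each monomial surviving in the exponential is balanced of type $(p,p)$; reality is then forced by the self-adjointness of $h$ under the conjugation that exchanges $A^{E_{0}\prime}$ and $A^{E_{0}\prime\prime}$, exactly as in the classical Hermitian holomorphic case.

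For \textup{(ii)}, given two generalized metrics $h_{0},h_{1}$ on $D$ (with the same $A^{E_{0}\prime\prime}$), I would join them by a smooth path $(h_{u})_{u\in[0,1]}$, write $A^{E_{0}}_{u}$ for the associated superconnection and set $\beta_{u}=h_{u}^{-1}\partial_{u}h_{u}\in\C(X,\mathrm{End}(D))$, and carry out a transgression computation. Differentiating $\mathrm{ch}(A^{E_{0},2}_{u},h_{u})$ in $u$ and repeatedly invoking the Leibniz and Bianchi identities should yield, schematically,
\[
\partial_{u}\,\mathrm{ch}(A^{E_{0},2}_{u},h_{u})=\tfrac{\sqrt{-1}}{2\pi}\,\partial^{X}\overline{\partial}^{X}\,\mathrm{Tr_{s}}\Bigl[\beta_{u}\exp\Bigl(\tfrac{\sqrt{-1}}{2\pi}A^{E_{0},2}_{u}\Bigr)\Bigr];
\]
integrating over $u\in[0,1]$ then exhibits the difference $\mathrm{ch}(A^{E_{0},2}_{1},h_{1})-\mathrm{ch}(A^{E_{0},2}_{0},h_{0})$ as $\partial^{X}\overline{\partial}^{X}\gamma$ with $\gamma\in\Omega^{(=)}(X,\mathbb{R})$, the refinement from $d^{X}$ to $\partial^{X}\overline{\partial}^{X}$ being again dictated by the bidegree structure. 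Part \textup{(iii)} is then formal: by \textup{(ii)} the class $\{\mathrm{ch}(A^{E_{0},2},h)\}$ is independent of $h$, and since a fixed splitting \eqref{no-ca-spl}--\eqref{no-ca-iso} converts the datum $A^{E\prime\prime}$ into $(E_{0},A^{E_{0}\prime\prime})$ and hence into $A^{E_{0},2}$, the resulting Bott--Chern class depends only on $A^{E\prime\prime}$.

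For \textup{(iv)}, independence of the splitting I would obtain by interpolating between two splittings through a one-parameter family of isomorphisms in $\mathrm{B_{dg}}(X)$ and reducing to \textup{(ii)}--\textup{(iii)}. To see that $\mathrm{ch_{BC}}$ descends to $\DC(X)$, I would invoke Block's equivalence \cite{Blo10} between the homotopy category of $\mathrm{B_{dg}}(X)$ and $\DC(X)$ and verify two points: \textup{(a)} a homotopy equivalence of antiholomorphic superconnections induces equality of Chern character forms in $H^{(=)}_{BC}(X,\mathbb{R})$ --- proved by the same transgression device, now applied to the homotopy operator --- and \textup{(b)} additivity of $\mathrm{ch_{BC}}$ along the cone of a morphism, equivalently along distinguished triangles; for \textup{(b)} I would equip the cone with a superconnection and a generalized metric adapted to the tautological short exact sequence relating it to the source and target, and run the Chern--Weil comparison for exact sequences. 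The main obstacle will be precisely \textup{(iv)(b)}: securing additivity at the level of \emph{forms}, not merely of cohomology classes, which requires a careful choice of metric on the cone together with control of the associated secondary Bott--Chern classes. Once this is in place, everything else reduces by transgression to the computations already carried out in \textup{(i)} and \textup{(ii)}.
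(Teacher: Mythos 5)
The first thing to note is that the paper does not prove this theorem: it is quoted from Bismut--Shen--Wei (cf. \cite[Theorems 8.1.2 and 8.8.1]{BSW23}), so there is no internal proof to compare your argument against. Your proposal is, in effect, an outline of the proof in that reference, and the overall architecture --- the Leibniz rule $d^{X}\mathrm{Tr_{s}}[\phi]=\mathrm{Tr_{s}}[[A^{E_{0}},\phi]]$ together with the Bianchi identity for (i), a double transgression in the metric parameter for (ii), formal descent for (iii), and Block's equivalence plus homotopy invariance for (iv) --- is the right one.

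Two soft spots. First, in (i) the claim that the form lies in $\Omega^{(=)}(X,\mathbb{R})$ does not follow from the bidegree bookkeeping as you state it: the components of $A^{E_{0}\prime\prime}$ carry antiholomorphic form degree $q\geq 0$ together with $\mathrm{End}(D)$-degree $1-q$ (and dually for $A^{E_{0}\prime}$), so the only constraint the supertrace imposes on a product of $k$ curvature factors is that the \emph{total} form degree be $2k$ --- it does not force the holomorphic and antiholomorphic degrees each to equal $k$. Killing the $(p,q)$-components with $p\neq q$, and establishing reality, requires a genuine further argument exploiting the invariance properties of the supertrace and the precise compatibility of $A^{E_{0}\prime}$ with the Hermitian form $\theta_{h}$; the naive analogy with the Hermitian holomorphic vector bundle case is not enough. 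Second, in (iv) the additivity along cones that you single out as ``the main obstacle'' is not actually needed for (iv): by Block's equivalence, an isomorphism in $\DC(X)$ between two superconnections is realized by a homotopy equivalence in $\mathrm{\underline{B}}(X)$, so homotopy invariance of the class (your point (a), again a transgression) already yields the descent. Additivity along distinguished triangles is a separate statement (\cite[Theorem 8.7.1]{BSW23}), which this paper invokes only later for the Whitney formula; folding it into (iv) makes the task look harder than it is.
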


Let $\mathcal{F}^{\bullet}$ be an object in the derived category $\mathrm{D^{b}_{coh}}(X)$.
Then there is an antiholomorphic superconnection $\mathscr{E}=(E, A^{E\prime\prime})$ such that $\mathcal{F}^{\bullet}$ is isomorphic to $\mathscr{E}$ in $\mathrm{D^{b}_{coh}}(X)$ (cf. \cite[Lemma 4.6]{Blo10} or \cite[Theorem 6.3.6]{BSW23}).
Due to (iv) in Theorem \ref{chern-cat}, the following definition is well-defined.
\begin{defn}
The \emph{Chern character} of  $\mathcal{F}^{\bullet}\in\mathrm{D^{b}_{coh}}(X)$ is defined to be
 $$
\mathrm{ch_{BC}}(\mathcal{F}^{\bullet}):=\mathrm{ch_{BC}}(A^{E\prime\prime})\in H^{(=)}_{BC}(X, \mathbb{R}).
$$
\end{defn}

If $X$ is a complex projective manifold, each coherent $\CO_{X}$-modules $\CF$ admits a finite locally free resolution $F^{\bullet}\rightarrow\CF$.
This implies that the Grothendieck group of coherent $\CO_{X}$-modules is equal to the Grothendieck group of holomorphic vector bundles over $X$.
For any compact complex surface, the resolution as above also exists, see \cite{Sch82}.
However, if $X$ is not projective such resolution may not exist when $\mathrm{dim}_{\mathbb{C}}\,X\geq3$, see \cite{Voi02}.
This is the reason why the definition of characteristic classes for coherent sheaves on general complex manifolds is more complicated, see \cite{Gri10, Wu20}.

\section{Bott--Chern characteristic classes of  coherent sheaves}\label{sec3}

Based on the definition of refined Chern classes of holomorphic vector bundles \cite{BC65} and the definition of Chern characters of coherent sheaves \cite{Qia16, BSW23, BR23},
there is a notion of Bott--Chern characteristic classes of coherent sheaves.
Without claiming any originality, we collect some basic properties, such as the Whitney formula and the naturality of Bott--Chern characteristic classes, which can be deduced from the properties of the Chern character for coherent sheaves.

Let  $\mathscr{E}=(E, A^{E\prime\prime})$ be an antiholomorphic superconnection on $X$ with the diagonal vector bundle $D$.
Given a generalized Hermitian metric $h$ on $D$, using the Chern--Weil theory \cite[\S\,1.6]{Zha01}, we can define the total Chern form as follows.

\begin{defn}\label{c-form}
The \emph{total Chern form} of the antiholomorphic superconnection $\mathscr{E}=(E, A^{E\prime\prime})$ associated to the metric $h$ is defined to be
\begin{equation}\label{b-c-form}
c(\mathscr{E}, h)
=\exp\biggl(\mathrm{Tr_{s}}
\biggl[\log\biggl(I+\frac{\sqrt{-1}}{2\pi}A^{E_{0,2}}\biggr)\biggr]\biggr).
\end{equation}
\end{defn}
Consider the Chern character form
$$
\mathrm{ch}(A^{E_{0},2}, h)=
\mathrm{Tr_{s}}
\biggl[\exp\biggl(\frac{\sqrt{-1}}{2\pi}A^{E_{0},2}\biggr)\biggr].
$$
Recall the power series expansion formulae for $\exp(x)$ and $\log(1+x)$,
\begin{equation}\label{log}
\log(1+x)=x-\frac{x^{2}}{2}+\cdots+\frac{(-1)^{n+1}x^{n}}{n}+\cdots
\end{equation}
and
\begin{equation}\label{exp}
\exp(x)=1+x+\frac{x^{2}}{2!}+\cdots+\frac{x^{n}}{n!}+\cdots.
\end{equation}
According to Theorem \ref{chern-cat}, we have the following unique expression
$$
\mathrm{ch}(A^{E_{0},2}, h)=
\mathrm{Tr_{s}}
\biggl[\exp\biggl(\frac{\sqrt{-1}}{2\pi}A^{E_{0},2}\biggr)\biggr]
=\sum^{n}_{i=0}\omega_{i},
$$
where $\omega_{i}\in\Omega^{i,i}(X,\mathbb{R})$ is $d^{X}$-closed and $\omega_{0}$ is the rank of $E$.
Comparing \eqref{log} and \eqref{exp} derives the following expression:
\begin{equation}\label{Theta}
\Theta:=\mathrm{Tr_{s}}
\biggl[\log\biggl(I+\frac{\sqrt{-1}}{2\pi}A^{E_{0,2}}\biggr)\biggr]
=\sum^{n}_{i=1}(-1)^{i+1}(i-1)!\omega_{i}.
\end{equation}

In particular, based on Theorem \ref{chern-cat} we have
\begin{prop}\label{c-class}
The total Chern form $c(\mathscr{E}, h)$ satisfies the following conditions:
\begin{itemize}
  \item [(i)] the form $c(\mathscr{E}, h)$ lies in $\Omega^{(=)}(X,\mathbb{R})$ and is $d^{X}$-closed;
  \item [(ii)] let $\underline{h}$ be another generalized metric on $D$, then the difference
      $$c(\mathscr{E}, h)-c(\mathscr{E}, \underline{h})$$
      is $\partial^{X}\overline{\partial}^{X}$-exact.
\end{itemize}
\end{prop}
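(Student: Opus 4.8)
The plan is to derive both statements directly from Theorem \ref{chern-cat} together with the explicit power-series identities \eqref{log}, \eqref{exp}, \eqref{Theta}, since the total Chern form $c(\mathscr{E},h)$ is, by Definition \ref{c-form}, nothing but $\exp(\Theta)$ where $\Theta = \mathrm{Tr_{s}}[\log(I+\tfrac{\sqrt{-1}}{2\pi}A^{E_{0,2}})]$. For part (i), I would observe that by \eqref{Theta} we have $\Theta = \sum_{i=1}^{n}(-1)^{i+1}(i-1)!\,\omega_i$, and each $\omega_i \in \Omega^{i,i}(X,\mathbb{R})$ is $d^{X}$-closed by Theorem \ref{chern-cat}(i). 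Since $\Omega^{(=)}(X,\mathbb{R}) = \bigoplus_{p=0}^{n}\Omega^{p,p}(X,\mathbb{R})$ is a subalgebra of the algebra of smooth forms (a product of a $(p,p)$-form and a $(q,q)$-form is a $(p+q,p+q)$-form, and it vanishes once $p+q>n$), the finite sum $\Theta$ lies in $\Omega^{(=)}(X,\mathbb{R})$ and has no constant term, so its exponential $\exp(\Theta) = \sum_{k\geq 0}\Theta^{k}/k!$ is again a well-defined element of $\Omega^{(=)}(X,\mathbb{R})$ (the series terminates for degree reasons). Closedness then follows because $d^{X}\Theta = 0$ — each $\omega_i$ is closed and $d^{X}$ is a derivation — and hence $d^{X}\exp(\Theta) = \exp(\Theta)\wedge d^{X}\Theta = 0$.

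For part (ii), let $\underline{h}$ be another generalized metric on $D$, with curvature $\underline{A}^{E_{0},2}$, and write $\underline{\Theta}$ and $\underline{\omega}_i$ for the corresponding objects. By Theorem \ref{chern-cat}(ii) there is $\gamma \in \Omega^{(=)}(X,\mathbb{R})$ with $\sum_i \omega_i - \sum_i \underline{\omega}_i = \partial^{X}\overline{\partial}^{X}\gamma$; comparing components in each bidegree $(i,i)$ gives $\omega_i - \underline{\omega}_i = \partial^{X}\overline{\partial}^{X}\gamma_i$ for the degree-$(i-1,i-1)$ piece $\gamma_i$ of $\gamma$ (and $\omega_0 = \underline{\omega}_0 = \mathrm{rk}\,E$). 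Consequently $\Theta - \underline{\Theta} = \sum_{i=1}^{n}(-1)^{i+1}(i-1)!\,\partial^{X}\overline{\partial}^{X}\gamma_i = \partial^{X}\overline{\partial}^{X}\beta$ for $\beta := \sum_{i=1}^{n}(-1)^{i+1}(i-1)!\,\gamma_i \in \Omega^{(=)}(X,\mathbb{R})$. It remains to promote this from the exponent to $c(\mathscr{E},h) - c(\mathscr{E},\underline{h}) = \exp(\Theta) - \exp(\underline{\Theta})$.

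The one point that needs genuine care — and which I expect to be the main obstacle — is the passage from ``$\Theta - \underline{\Theta}$ is $\partial^{X}\overline{\partial}^{X}$-exact'' to ``$\exp(\Theta) - \exp(\underline{\Theta})$ is $\partial^{X}\overline{\partial}^{X}$-exact,'' because $\partial^{X}\overline{\partial}^{X}$-exactness is not preserved under multiplication by arbitrary closed forms (unlike $d$-exactness). I would handle this by interpolating: set $\Theta_t = \underline{\Theta} + t(\Theta - \underline{\Theta})$ for $t\in[0,1]$ and compute $\frac{d}{dt}\exp(\Theta_t)$. Since all the relevant forms lie in $\Omega^{(=)}(X,\mathbb{R})$, which is a commutative algebra concentrated in even total degree, $\Theta - \underline{\Theta}$ commutes with $\Theta_t$, so $\frac{d}{dt}\exp(\Theta_t) = (\Theta - \underline{\Theta})\exp(\Theta_t) = \partial^{X}\overline{\partial}^{X}\beta \cdot \exp(\Theta_t)$; moreover $\exp(\Theta_t)$ is $d^{X}$-closed by the argument in part (i), hence in particular $\partial^{X}$-closed and $\overline{\partial}^{X}$-closed, so $\partial^{X}\overline{\partial}^{X}(\beta\cdot\exp(\Theta_t)) = \partial^{X}\overline{\partial}^{X}\beta\cdot\exp(\Theta_t)$ (the derivation identities for $\partial^{X}$ and $\overline{\partial}^{X}$, together with vanishing of $\partial^{X}\exp(\Theta_t)$ and $\overline{\partial}^{X}\exp(\Theta_t)$ and a sign check on the even-degree form $\beta$). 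Integrating in $t$ from $0$ to $1$ yields $c(\mathscr{E},h) - c(\mathscr{E},\underline{h}) = \partial^{X}\overline{\partial}^{X}\!\left(\int_0^1 \beta\cdot\exp(\Theta_t)\,dt\right)$, with the integrand a smooth family in $\Omega^{(=)}(X,\mathbb{R})$, which proves (ii). Alternatively, one can avoid the integral entirely by using the formal identity $c(\mathscr{E},h) = \sum_i \sigma_i(\omega_\bullet)$ expressing the total Chern form's components as universal polynomials in the $\omega_j$ with $j\leq i$, and arguing degree by degree that each difference $\sigma_i(\omega_\bullet) - \sigma_i(\underline{\omega}_\bullet)$ is $\partial^{X}\overline{\partial}^{X}$-exact because the lowest-degree discrepancy $\omega_1 - \underline{\omega}_1$ already is and every higher term is either closed times exact or handled inductively; but the interpolation argument is cleaner and I would present that one.
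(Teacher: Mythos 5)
Your proposal is correct, and both parts rest on the same key observation as the paper: part (i) is identical, and in part (ii) both arguments reduce to the fact that multiplying a $\partial^{X}\overline{\partial}^{X}$-exact form by a form that is simultaneously $\partial^{X}$- and $\overline{\partial}^{X}$-closed preserves $\partial^{X}\overline{\partial}^{X}$-exactness (via the Leibniz rule and the bidegree reason that a $d^{X}$-closed $(p,p)$-form is automatically closed for each operator separately). Where you differ is in how you assemble $\exp(\Theta)-\exp(\underline{\Theta})$: the paper uses the purely algebraic telescoping identity $\Theta^{k}-\underline{\Theta}^{k}=(\Theta-\underline{\Theta})\cdot\sum_{i+j=k-1}\Theta^{i}\cdot\underline{\Theta}^{j}$ and applies the key observation to each term of the finite sum, producing an explicit potential with no integration, whereas you interpolate $\Theta_{t}=\underline{\Theta}+t(\Theta-\underline{\Theta})$, differentiate $\exp(\Theta_{t})$ in $t$ (legitimate, since the exponential is a polynomial in $t$ with form coefficients by degree truncation, and the algebra $\Omega^{(=)}(X,\mathbb{R})$ is commutative), and integrate a transgression term. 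Both are complete; the paper's version is slightly more elementary in that it avoids the interchange of $\partial^{X}\overline{\partial}^{X}$ with the $t$-integral, while yours is the more systematic homotopy-style argument and generalizes more readily to other functions of $\Theta$. Your correctly flagged worry --- that $\partial^{X}\overline{\partial}^{X}$-exactness is not preserved under multiplication by arbitrary closed forms --- is resolved in both treatments by exactly the same bidegree remark, so there is no gap.
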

\begin{proof}
Observe that $\Theta$ lies in $\Omega^{(=)}(X,\mathbb{R})$ and $d^{X}\Theta=0$.
In view of \eqref{b-c-form} and \eqref{Theta}, we have $c(\mathscr{E}, h)=\exp(\Theta)$ and this implies that the form $c(\mathscr{E}, h)$
is a $d^{X}$-closed real form lying in $\Omega^{(=)}(X,\mathbb{R})$.
We now consider the assertion (ii).
Let $\underline{A}^{E_{0},2}$ be the curvature corresponding to the metric $\underline{h}$.
Then the Chern character form associated to the metric $\underline{h}$ has the expression
$$
\mathrm{ch}(\underline{A}^{E_{0},2}, \underline{h})=\sum^{n}_{i=0}\underline{\omega}_{i}
\in\Omega^{(=)}(X,\mathbb{R}).
$$
From definition, we obtain
\begin{eqnarray*}
  c(\mathscr{E}, h)-c(\mathscr{E}, \underline{h})
   &=&  \exp(\Theta)-\exp(\underline{\Theta})= \sum_{k\geq1}\frac{1}{k!}(\Theta^{k}-\underline{\Theta}^{ k}),
\end{eqnarray*}
where
$$
\underline{\Theta}
=\sum^{n}_{i=1}(-1)^{i+1}(i-1)!\underline{\omega}_{i}\in\Omega^{(=)}(X,\mathbb{R}).
$$
Owing to (ii) in Theorem\ref{chern-cat}, we have
$$
\mathrm{ch}(A^{E_{0},2}, h)-
      \mathrm{ch}(\underline{A}^{E_{0},2}, \underline{h})
=\sum^{n}_{i=1}(\omega_{i}-\underline{\omega}_{i})
=\sum^{n}_{i=1}\partial^{X}\overline{\partial}^{X}\xi_{i-1}
$$
for some smooth forms $\xi_{i-1}\in\Omega^{i-1,i-1}(X,\mathbb{R})$.
Set $\eta_{i-1}=(-1)^{i+1}(i-1)!\xi_{i-1}$, then the following equality is valid
$$
\Theta-\underline{\Theta}
=\sum^{n}_{i=1}\partial^{X}\overline{\partial}^{X}\eta_{i-1}.
$$
Note that both $\Theta$ and $\underline{\Theta}$ are $d^{X}$-closed forms lying in $\Omega^{(=)}(X, \mathbb{R})$.
Consequently, the form $\Theta$ (resp. $\underline{\Theta}$) is  $\partial^{X}$ and $\overline{\partial}^{X}$-closed and so is the wedge product
$$
\Theta^{i}\cdot\underline{\Theta}^{j}:=
\underbrace{\Theta\wedge\cdots\wedge\Theta}_{\mathrm{i-folds}}\wedge
\underbrace{\underline{\Theta}
\wedge\cdots\wedge\underline{\Theta}}_{\mathrm{j-folds}}
$$
 for any $0\leq i, j\leq n$.
Since
$\Theta^{k}-\underline{\Theta}^{k}=(\Theta-\underline{\Theta})
\cdot(\sum_{i+j=k-1}\Theta^{i}\cdot\underline{\Theta}^{j})$,
using the Leibniz rule deduces
$$
\Theta^{k}-\underline{\Theta}^{k}=\partial^{X}\overline{\partial}^{X}
\biggl[\biggl(\sum^{n}_{i=1}\eta_{i-1}\biggr)
\cdot\biggl(\sum_{i+j=k-1}\Theta^{i}
\cdot\underline{\Theta}^{j}\biggr)\biggr],
$$
for each $k\geq1$.
As a result, we are led to the conclusion that $c(\mathscr{E}, h)-c(\mathscr{E}, \underline{h})$ is $\partial^{X}\overline{\partial}^{X}$-exact.
\end{proof}
According to Proposition \ref{c-class} and (iv) in Theorem \ref{chern-cat}, the form $c(\mathscr{E}, h)$ represents a cohomology class in the real Bott--Chern cohomology $H^{(=)}_{BC}(X, \mathbb{R})$, denoted by $\{c(\mathscr{E})\}$,  which is independent with the choices of the metric $h$ and the non-canonical splitting \eqref{no-ca-spl}.
We call $\{c(\SE)\}$ the (total) Bott--Chern characteristic class of the antiholomorphic superconnection $\SE$.

Let $\CF^{\bullet}$ be an object in $\mathrm{D}^{\mathrm{b}}_{\mathrm{coh}}(X)$.
A result of Block \cite[Lemma 4.6]{Blo10} (see also \cite[Theorem 6.3.6]{BSW23}) says that there exists an antiholomorphic superconnection $\mathscr{E}=(E, A^{E^{\prime\prime}})$ such that $\mathscr{E}$ isomorphic to $\CF^{\bullet}$ in the derived category $\mathrm{D}^{\mathrm{b}}_{\mathrm{coh}}(X)$.
The following proposition is a direct consequence of  \cite[Theorem 8.8.1]{BSW23}.
\begin{prop}
The cohomology class $\{c(\mathscr{E})\}$ depends only on the isomorphism class of  $\CF^{\bullet}$ in $\mathrm{D}^{\mathrm{b}}_{\mathrm{coh}}(X)$.
\end{prop}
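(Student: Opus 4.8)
The plan is to deduce the proposition from Theorem~\ref{chern-cat}(iv) by realizing the total Chern form as a fixed universal polynomial in the components of the Chern character form, and then checking that this polynomial operation passes to the real Bott--Chern cohomology. Concretely, by \eqref{b-c-form}, \eqref{Theta} and the expansions \eqref{log}, \eqref{exp} one has $c(\mathscr{E}, h)=\exp(\Theta)$ with $\Theta=\sum_{i=1}^{n}(-1)^{i+1}(i-1)!\,\omega_{i}$, where $\mathrm{ch}(A^{E_{0},2}, h)=\sum_{i=0}^{n}\omega_{i}$. Hence there is a polynomial $P$ with rational coefficients, depending only on $n=\dim_{\mathbb{C}}X$ and acting componentwise on $\Omega^{(=)}(X,\mathbb{R})$, such that $c(\mathscr{E}, h)=P(\omega_{0},\dots,\omega_{n})$.

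Next I would show that $P$ descends to a well-defined operation on classes of $d^{X}$-closed forms in $\Omega^{(=)}(X,\mathbb{R})$: if $\omega_{i}-\underline{\omega}_{i}$ is $\partial^{X}\overline{\partial}^{X}$-exact for every $i$, then so is $P(\omega_{0},\dots,\omega_{n})-P(\underline{\omega}_{0},\dots,\underline{\omega}_{n})$. This is exactly the telescoping-plus-Leibniz computation already carried out in the proof of Proposition~\ref{c-class}(ii): writing $\Theta^{k}-\underline{\Theta}^{k}=(\Theta-\underline{\Theta})\cdot\sum_{i+j=k-1}\Theta^{i}\underline{\Theta}^{j}$ and using that the wedge product of a $\partial^{X}\overline{\partial}^{X}$-exact form with a $d^{X}$-closed form in $\Omega^{(=)}(X,\mathbb{R})$ is again $\partial^{X}\overline{\partial}^{X}$-exact. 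Consequently the class $\{c(\mathscr{E})\}=\{P(\omega_{0},\dots,\omega_{n})\}$ is determined by the classes $\{\omega_{i}\}$, that is, by $\ch(A^{E\prime\prime})$.

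Finally I would invoke Theorem~\ref{chern-cat}(iv) (equivalently \cite[Theorem 8.8.1]{BSW23}), which states that $\ch(A^{E\prime\prime})$ depends only on the isomorphism class of $\mathscr{E}$ in $\DC(X)$. Combining this with Block's equivalence \cite[Lemma 4.6]{Blo10} identifying the chosen antiholomorphic superconnection $\mathscr{E}$ with the object $\CF^{\bullet}$, we conclude that $\{c(\mathscr{E})\}$ depends only on the isomorphism class of $\CF^{\bullet}$ in $\DC(X)$, which is the assertion.

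The main -- and only mild -- obstacle is the bookkeeping in the second step: one must verify that in each monomial appearing in $P(\omega_{0},\dots,\omega_{n})-P(\underline{\omega}_{0},\dots,\underline{\omega}_{n})$ exactly one factor can be taken $\partial^{X}\overline{\partial}^{X}$-exact while the remaining factors are $d^{X}$-closed, so that the Leibniz rule applies termwise and produces a $\partial^{X}\overline{\partial}^{X}$-primitive. Since this repeats verbatim the argument of Proposition~\ref{c-class}, no genuinely new difficulty arises, and the proposition is indeed a direct consequence of the cited results.
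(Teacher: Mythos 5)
Your proposal is correct and follows essentially the same route as the paper: invoke Theorem \ref{chern-cat}(iv) (i.e.\ \cite[Theorem 8.8.1]{BSW23}) to get $\{\Theta\}=\{\underline{\Theta}\}$, and then use the fact that $c(\mathscr{E},h)=\exp(\Theta)$ descends to Bott--Chern classes via the telescoping-plus-Leibniz computation already carried out in Proposition \ref{c-class}(ii). Your write-up merely makes explicit the descent step that the paper leaves implicit in the phrase ``results from the definition,'' so no new ideas or gaps are involved.
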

\begin{proof}
Assume that $\underline{\CF}^{\bullet}$ is isomorphic to $\CF^{\bullet}$ in the derived category $\mathrm{D}^{\mathrm{b}}_{\mathrm{coh}}(X)$.
Let $\mathscr{\underline{E}}=(\underline{E}, A^{\underline{E}^{\prime\prime}})$ be the corresponding antiholomorphic superconnection of $\underline{\CF}^{\bullet}$.
On account of \cite[Theorem 8.8.1]{BSW23}, the Chern characters $\mathrm{ch_{BC}}(A^{E^{\prime\prime}})$ and $\mathrm{ch_{BC}}(A^{\underline{E}^{\prime\prime}})$ are identical in $H^{(=)}_{BC}(X,\mathbb{R})$.
Consider the Chern character forms
$
\mathrm{ch}(A^{E_{0},2}, h)=\sum^{n}_{i=0}\omega_{i}
$
and
$
\mathrm{ch}(\underline{A}^{E_{0},2}, \underline{h})=\sum^{n}_{i=0}\underline{\omega}_{i}.
$
Let $\{\omega_{i}\}$ (resp.$\{\underline{\omega}_{i}\}$) be the corresponding Bott--Chern cohomology class of $\omega_{i}$ (resp. $\underline{\omega}_{i}$).
To be more specific, we have
$$\sum^{n}_{i=1}\{\omega_{i}\}
=\sum^{n}_{i=1}\{\underline{\omega}_{i}\}$$
and hence $\{\Theta\}=\{\underline{\Theta}\}$.
The equality $\{c(\mathscr{E})\}=\{c(\mathscr{\underline{E}})\}$ results from the definition of total Bott--Chern characteristic classes for antiholomorphic superconnections.
\end{proof}

Based upon the proposition above, we have the notion of Bott--Chern characteristic class of $\CF^{\bullet}$ as follows.
\begin{defn}\label{t-real-b-c}
The (total) \emph{Bott--Chern characteristic class} of an object $\CF^{\bullet}\in\mathrm{D}^{\mathrm{b}}_{\mathrm{coh}}(X)$ is defined to be the cohomology class
$$
\hat{c}(\CF^{\bullet}):=\{c(\mathscr{E})\}\in H^{(=)}_{BC}(X, \mathbb{R}),
$$
where $\mathscr{E}=(E, A^{E^{\prime\prime}})$ is an antiholomorphic superconnection in the isomorphism class of $\CF^{\bullet}$ in $\mathrm{D}^{\mathrm{b}}_{\mathrm{coh}}(X)$.
\end{defn}

Let $\mathrm{B_{dg}}(X)$ be the dg-category of antiholomorphic superconnections on $X$ and $\mathrm{B}(X)$ its 0-cycle category.
Set $\mathrm{\underline{B}}(X)$ the homotopy category of $\mathrm{B}(X)$.
A result of Block \cite[Theorem 4.3]{Blo10} shows that there exists a natural equivalence $\underline{F}_{X}:\mathrm{\underline{B}}(X)
\rightarrow\mathrm{D^{b}_{coh}}(X)$  as triangulated categories.
The following result is a slight generalization of \cite[Proposition 1.5]{BC65}.
\begin{prop}[Whitney formula]\label{whitney}
Let $\CE^{\bullet}\rightarrow
\CF^{\bullet}\rightarrow\CG^{\bullet}
\rightarrow\CE^{\bullet}[1]$
be an exact triangle in $\mathrm{D^{b}_{coh}}(X)$.
Then their Bott--Chern characteristic classes satisfy the formula:
$$
\hat{c}(\CE^{\bullet})\cdot \hat{c}(\CG^{\bullet})
=\hat{c}(\CF^{\bullet}),
$$
\end{prop}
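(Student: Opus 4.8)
The plan is to reduce the statement to the corresponding additivity property of the Chern character, which is already recorded (Theorem \ref{chern-cat} together with \cite[Theorem 8.8.1]{BSW23}), and then to transfer the information from the Chern character forms to the total Chern forms via the algebraic relation \eqref{Theta} between $\Theta$ and the components $\omega_i$.

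First I would pick antiholomorphic superconnections $\mathscr{E}=(E,A^{E\prime\prime})$, $\mathscr{F}=(F,A^{F\prime\prime})$, $\mathscr{G}=(G,A^{G\prime\prime})$ representing $\CE^{\bullet}$, $\CF^{\bullet}$, $\CG^{\bullet}$, so that $\mathscr{F}$ is isomorphic in $\mathrm{D^{b}_{coh}}(X)$ to the cone of the morphism $\mathscr{E}\to\mathscr{G}[-1]$ — equivalently, to the mapping cone construction in $\mathrm{B_{dg}}(X)$ (cf. \cite[\S 5.6]{BSW23}), using the equivalence $\underline{F}_X$ of Block. The key input is that the Chern character is additive on exact triangles: choosing the diagonal bundles and generalized metrics compatibly for the cone, one has an equality of Chern character forms $\mathrm{ch}(A^{F_0,2},h_F)=\mathrm{ch}(A^{E_0,2},h_E)+\mathrm{ch}(A^{G_0,2},h_G)$, or at worst the corresponding equality of Bott--Chern cohomology classes $\mathrm{ch_{BC}}(\CF^{\bullet})=\mathrm{ch_{BC}}(\CE^{\bullet})+\mathrm{ch_{BC}}(\CG^{\bullet})$, which follows from \cite[Theorem 8.8.1]{BSW23} and (iv) of Theorem \ref{chern-cat}. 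Writing $\mathrm{ch}=\sum_i\omega_i$ in each case, this gives $\{\omega_i^{F}\}=\{\omega_i^{E}\}+\{\omega_i^{G}\}$ in $H^{i,i}_{BC}(X,\mathbb{R})$ for every $i$.

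Next I would translate this into the total Chern classes. By \eqref{Theta} the class $\{\Theta\}=\sum_{i\geq1}(-1)^{i+1}(i-1)!\{\omega_i\}$ is $\mathbb{R}$-linear in the tuple $(\{\omega_i\})$, so additivity of the $\{\omega_i\}$ gives $\{\Theta_F\}=\{\Theta_E\}+\{\Theta_G\}$ in $H^{(=)}_{BC}(X,\mathbb{R})$. Then, exactly as in the proof of Proposition \ref{c-class}(ii), $\exp$ of a sum of $d^X$-closed forms lying in $\Omega^{(=)}(X,\mathbb{R})$ that are moreover $\partial^X$- and $\overline{\partial}^X$-closed is multiplicative up to $\partial^X\overline{\partial}^X$-exact terms: from $\Theta_F-(\Theta_E+\Theta_G)=\partial^X\overline{\partial}^X(\text{something})$ one deduces, using the Leibniz rule and the telescoping identity $\exp(a+b)-\exp(a)\exp(b)$ expanded in the $\partial^X\overline{\partial}^X$-closed algebra, that $c(\mathscr{F},h_F)$ and $c(\mathscr{E},h_E)\cdot c(\mathscr{G},h_G)$ differ by a $\partial^X\overline{\partial}^X$-exact form. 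Passing to cohomology classes yields $\hat{c}(\CF^{\bullet})=\hat{c}(\CE^{\bullet})\cdot\hat{c}(\CG^{\bullet})$, and independence of all choices is guaranteed by Definition \ref{t-real-b-c}.

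The main obstacle is the very first step: producing, for a given exact triangle, a genuine \emph{short exact sequence of antiholomorphic superconnections} (or a cone presentation) for which the additivity of Chern character \emph{forms} — not just classes — holds on the nose, since $\mathrm{D^{b}_{coh}}(X)$ only remembers the triangle up to isomorphism and the diagonal bundle $D$ and its generalized metric are auxiliary data. The cleanest route is to avoid form-level additivity entirely and argue purely at the level of Bott--Chern cohomology classes, invoking \cite[Theorem 8.8.1]{BSW23} for the behaviour of $\mathrm{ch_{BC}}$ under cones (this is precisely the statement that $\mathrm{ch_{BC}}$ factors through the Grothendieck group $K_0(\mathrm{D^{b}_{coh}}(X))$ and is additive there); once $\{\Theta_F\}=\{\Theta_E\}+\{\Theta_G\}$ is known as an identity of cohomology classes, the multiplicativity of $c$ follows formally as above without ever comparing the chosen metrics on $E$, $F$, $G$.
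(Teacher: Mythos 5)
Your proposal is correct and follows essentially the same route as the paper: pass to the Grothendieck group of antiholomorphic superconnections via Block's equivalence, invoke the additivity of $\mathrm{ch_{BC}}$ there (the paper cites \cite[Theorem 8.7.1]{BSW23}), and then convert additivity of the Chern character into multiplicativity of the total Chern class through the universal relation \eqref{Theta} and exponentiation in the Bott--Chern cohomology ring. Your closing remark — that one should argue at the level of cohomology classes rather than forms, so no compatible choice of metrics or cone presentations is needed — is exactly the resolution implicit in the paper's (terser) final step.
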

\begin{proof}
Consider the exact triangle in $\mathrm{D^{b}_{coh}}(X)$:
\begin{equation}\label{s-e-s}
\CE^{\bullet}\rightarrow
\CF^{\bullet}\rightarrow
\CG^{\bullet}\rightarrow\CE^{\bullet}[1].
\end{equation}
From the equivalence functor $\underline{F}_{X}$,
there is an exact triangle in $\mathrm{\underline{B}}(X)$ corresponding to \eqref{s-e-s}:
\begin{equation*}
(E, A^{E^{\prime\prime}})\rightarrow
(F, A^{F^{\prime\prime}})\rightarrow(G, A^{G^{\prime\prime}})
\rightarrow(E, A^{E^{\prime\prime}})[1].
\end{equation*}
Moreover, the functor $\underline{F}_{X}$ induces an isomorphism between the Grothendieck groups $K(\mathrm{\underline{B}}(X))$ and
$K(\mathrm{D^{b}_{coh}}(X))$.
According to \cite[Theorem 8.7.1]{BSW23}, the Chern character
\begin{eqnarray*}
  \mathrm{ch_{BC}} :K(\mathrm{\underline{B}}(X))&\longrightarrow& H^{(=)}_{BC}(X, \mathbb{R})\\
   (E, A^{E^{\prime\prime}})&\longmapsto&
   \{\mathrm{ch}(A^{E_{0},2}, h)\}
\end{eqnarray*}
is a morphism of groups.
As a result, we have
$$
\mathrm{ch_{BC}}(F, A^{F\prime\prime})=
\mathrm{ch_{BC}}(E, A^{E\prime\prime})+
\mathrm{ch_{BC}}(G, A^{G\prime\prime})
$$
which is equal to
$$
\mathrm{ch_{BC}}(\CF^{\bullet})=
\mathrm{ch_{BC}}(\CE^{\bullet})+
\mathrm{ch_{BC}}(\CG^{\bullet}).
$$
Comparing the definitions of total Bott--Chern form \eqref{c-form} and the Chern character form \eqref{c-t-z-form} deduces the formula
$$
\hat{c}(\CE^{\bullet})\cdot \hat{c}(\CG^{\bullet})=\hat{c}(\CF^{\bullet})
$$
and this completes the proof.
\end{proof}

The following proposition is a direct consequence of \cite[Theorem 8.8.3]{BSW23}.
\begin{prop}[Naturality]\label{p-b-k}
Let $X$ and $Y$ be compact complex manifolds and $f:X\rightarrow Y$ a holomorphic map.
For any $\CF^{\bullet}\in\mathrm{D^{b}_{coh}}(Y)$, we have $Lf^{\ast}\CF^{\bullet}\in\mathrm{D^{b}_{coh}}(X)$ and
$$
\hat{c}(Lf^{\ast}\CF^{\bullet})
=f^{\ast}\hat{c}(\CF^{\bullet}),
$$
where $Lf^{\ast}$ is the left derived functor of $f^{\ast}$.
\end{prop}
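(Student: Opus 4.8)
The plan is to reduce the statement to the naturality of the Chern character, namely \cite[Theorem 8.8.3]{BSW23}, by exploiting the fact that the total Chern form \eqref{b-c-form} is obtained from the Chern character form by the universal polynomial identities \eqref{Theta}, which are preserved under any pullback of differential forms. First I would represent $\CF^{\bullet}$ by an antiholomorphic superconnection $\mathscr{E}=(E, A^{E\prime\prime})$ on $Y$ via Block's equivalence, fix a non-canonical splitting \eqref{no-ca-spl} with associated diagonal bundle $D$, and choose a generalized Hermitian metric $h$ on $D$. Pulling back along $f$ produces the antiholomorphic superconnection $f^{\ast}\mathscr{E}=(f^{\ast}E, f^{\ast}A^{E\prime\prime})$ on $X$ (cf. \cite[\S 5.7]{BSW23}), which is isomorphic in $\mathrm{D^{b}_{coh}}(X)$ to $Lf^{\ast}\CF^{\bullet}$; in particular this shows $Lf^{\ast}\CF^{\bullet}\in\mathrm{D^{b}_{coh}}(X)$. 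The splitting of $E$ pulls back to a splitting of $f^{\ast}E$ with diagonal bundle $f^{\ast}D$, and $f^{\ast}h$ is a generalized Hermitian metric on $f^{\ast}D$ whose associated curvature is $f^{\ast}A^{E_{0},2}$.

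Next I would invoke naturality of the Chern--Weil construction to get $\mathrm{ch}(f^{\ast}A^{E_{0},2}, f^{\ast}h)=f^{\ast}\mathrm{ch}(A^{E_{0},2}, h)$, so that writing $\mathrm{ch}(A^{E_{0},2}, h)=\sum_{i=0}^{n}\omega_{i}$ the components transform componentwise, with $f^{\ast}\omega_{i}\in\Omega^{i,i}(X,\mathbb{R})$. Since $f^{\ast}$ is a ring homomorphism on forms that commutes with $\partial$ and $\overline{\partial}$, applying it to \eqref{Theta} shows that $f^{\ast}\Theta$ is precisely the quantity $\Theta$ attached to $f^{\ast}\mathscr{E}$ and $f^{\ast}h$; hence by \eqref{b-c-form}, $c(f^{\ast}\mathscr{E}, f^{\ast}h)=\exp(f^{\ast}\Theta)=f^{\ast}\exp(\Theta)=f^{\ast}c(\mathscr{E}, h)$. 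Passing to Bott--Chern classes is legitimate because $f^{\ast}$ descends to a well-defined map $H^{(=)}_{BC}(Y,\mathbb{R})\to H^{(=)}_{BC}(X,\mathbb{R})$ (it preserves $d$-closedness and $\partial^{Y}\overline{\partial}^{Y}$-exactness), and this yields $\hat{c}(Lf^{\ast}\CF^{\bullet})=\{c(f^{\ast}\mathscr{E}, f^{\ast}h)\}=f^{\ast}\{c(\mathscr{E}, h)\}=f^{\ast}\hat{c}(\CF^{\bullet})$, as desired.

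The only point genuinely requiring care — and the part I would expect to be the main obstacle — is the compatibility of the superconnection-theoretic pullback with the derived functor $Lf^{\ast}$ under Block's equivalence, together with the verification that the diagonal bundle and the generalized metric behave functorially under $f^{\ast}$. All of this, however, is already established in \cite[\S 5.7]{BSW23} and \cite[Theorem 8.8.3]{BSW23}, so once these inputs are in place the remainder of the argument is purely formal.
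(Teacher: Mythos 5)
Your proposal is correct and follows exactly the route the paper intends: the paper offers no proof beyond declaring the proposition a direct consequence of \cite[Theorem 8.8.3]{BSW23}, and your argument is the natural unwinding of that citation (represent $\CF^{\bullet}$ by a superconnection, pull it back, invoke naturality of the Chern character, and transport the conclusion through the universal polynomial relation \eqref{Theta} between $\mathrm{ch_{BC}}$ and $\hat{c}$). The only place where you claim slightly more than you need is the form-level identity $c(f^{\ast}\mathscr{E}, f^{\ast}h)=f^{\ast}c(\mathscr{E},h)$, which requires the formal adjoint defining the curvature $A^{E_{0},2}$ to commute with pullback; this can be sidestepped entirely by working at the level of cohomology classes, where \cite[Theorem 8.8.3]{BSW23} gives $\mathrm{ch_{BC}}(Lf^{\ast}\CF^{\bullet})=f^{\ast}\mathrm{ch_{BC}}(\CF^{\bullet})$ directly and the polynomial relation between $\hat{c}$ and the bidegree components of $\mathrm{ch_{BC}}$ is preserved because $f^{\ast}$ is a bigraded ring homomorphism on $H^{(=)}_{BC}$.
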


Note that there is a canonical equivalence between the category of holomorphic vector bundles over $X$ and the category of locally free $\CO_{X}$-modules of constant rank.
Let $F$ be a holomorphic vector bundle on $X$ and $\CF$ the locally free $\CO_{X}$-modules corresponding to $F$.
Let $g^{F}$ be a Hermitian metric on $F$ and $\nabla^{F}$ the Chern connection corresponding to $g^{F}$.
Denote by $R^{F}$ the curvature of $\nabla^{F}$ which is a $(1,1)$-form valued in $\mathrm{End}(F)$.
According to \cite{BC65}, the total Chern form
$$
c(F, h)=\det\biggl(I+\frac{\sqrt{-1}}{2\pi}R^{F}\biggr)
$$
determines the \emph{refined Chern class} (also called the \emph{Bott--Chern characteristic class}) of the holomorphic vector bundle:
$$
\hat{c}(F)=\{c(F, g^{F})\}\in H^{(=)}_{BC}(X, \mathbb{R}).
$$
Similarly, we have the refined Chern character of $F$ valued in the real Bott--Chern cohomology:
$$
\mathrm{ch_{BC}}(F)=
\biggl\{\mathrm{Tr}
\biggl[\exp\biggl(\frac{\sqrt{-1}}{2\pi}R^{F}\biggr)\biggr]\biggr\}
\in H^{(=)}_{BC}(X, \mathbb{R}).
$$
\begin{prop}
The cohomology classes $\hat{c}(\CF)$ and $\hat{c}(F)$ are identical in $H^{(=)}_{BC}(X, \mathbb{R})$.
\end{prop}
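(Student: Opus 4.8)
The plan is to reduce the statement to the observation that, for a suitably chosen antiholomorphic superconnection representing the locally free sheaf $\CF$, the entire formalism of Section \ref{sec2} collapses to classical Chern--Weil theory. By Definition \ref{t-real-b-c}, together with Proposition \ref{c-class} and part (iv) of Theorem \ref{chern-cat}, the class $\hat c(\CF)$ is independent of the antiholomorphic superconnection chosen to represent $\CF\in\DC(X)$, of the generalized Hermitian metric on its diagonal vector bundle, and of the splitting \eqref{no-ca-spl}. Hence it suffices to exhibit one convenient representative, with one convenient metric, for which the total Chern form \eqref{b-c-form} literally equals the classical Bott--Chern form $c(F,g^{F})=\det\bigl(I+\frac{\sqrt{-1}}{2\pi}R^{F}\bigr)$.

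First I would take $\mathscr{E}=(E,A^{E\prime\prime})$ with $E=\wedge(\overline{T^{\ast}X})\otimes F$, graded by antiholomorphic form degree and equipped with the evident left $\wedge(\overline{T^{\ast}X})$-module structure, and with $A^{E\prime\prime}=\overline{\partial}^{F}$ the Dolbeault operator attached to the holomorphic structure of $F$; conditions (i)--(ii) defining an antiholomorphic flat superconnection are then exactly the Leibniz rule and $\overline{\partial}^{F}\circ\overline{\partial}^{F}=0$, and under Block's equivalence $\underline{F}_{X}$ this $\mathscr{E}$ corresponds to $\CF$ placed in degree $0$ in $\DC(X)$. For this $E$ the filtration $F^{\bullet}E$ degenerates, so the diagonal vector bundle is $D=F$ concentrated in degree $0$, the splitting \eqref{no-ca-spl} is trivial and $E_{0}\cong E$ canonically, and a generalized Hermitian metric on $D$ may be taken to be an ordinary Hermitian metric $h=g^{F}$ (with all higher components vanishing).

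Next I would identify each ingredient of the construction under these choices. For $h=g^{F}$ the form $\theta_{h}$ on $\Omega(X,D)=\Omega^{\bullet,\bullet}(X,F)$ is the standard pairing, the associated adjoint $A^{E_{0}\prime}$ of $A^{E_{0}\prime\prime}=\overline{\partial}^{F}$ is the $(1,0)$-part of the Chern connection, so $A^{E_{0}}=\nabla^{F}$ is the Chern connection and $A^{E_{0},2}=[A^{E_{0}\prime},A^{E_{0}\prime\prime}]=(\nabla^{F})^{2}=R^{F}$; moreover, since $D$ sits in even degree $0$, the supertrace $\mathrm{Tr_{s}}$ reduces to the ordinary trace $\mathrm{Tr}$. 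Substituting into \eqref{b-c-form} and using the elementary identity $\exp\bigl(\mathrm{Tr}[\log(I+A)]\bigr)=\det(I+A)$ for the commuting, even endomorphism-valued form $A=\frac{\sqrt{-1}}{2\pi}R^{F}$ gives $c(\mathscr{E},h)=\det\bigl(I+\frac{\sqrt{-1}}{2\pi}R^{F}\bigr)=c(F,g^{F})$, and passing to Bott--Chern classes yields $\hat c(\CF)=\{c(\mathscr{E})\}=\{c(F,g^{F})\}=\hat c(F)$.

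The main obstacle, and really the only point requiring care, is the bookkeeping in the middle step: checking that for this particular $E$ the generalized-metric machinery of \cite{BSW23} genuinely reproduces the Chern connection $\nabla^{F}$ and its curvature $R^{F}$ with the correct sign and normalization, that $E_{0}\cong E$ so that no spurious contributions enter \eqref{b-c-form}, and that $\mathrm{Tr_{s}}$ collapses to $\mathrm{Tr}$. Once these conventions are matched, the identity $\exp\bigl(\mathrm{Tr}[\log(I+A)]\bigr)=\det(I+A)$ and part (iv) of Theorem \ref{chern-cat} close the argument.
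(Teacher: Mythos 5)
Your argument is correct and is in substance the same route the paper takes: the paper likewise rests on the identity $\det(I+A)=\exp\bigl(\mathrm{Tr}[\log(I+A)]\bigr)$ and on the fact that for a locally free sheaf the superconnection formalism collapses to classical Chern--Weil theory, except that it reduces the comparison of total Chern classes to that of Chern characters and then cites \cite[Theorem 9.4.1]{BSW23} for $\mathrm{ch_{BC}}(\CF)=\mathrm{ch_{BC}}(F)$, whereas you verify that identification directly on the Dolbeault representative $E=\wedge(\overline{T^{\ast}X})\otimes F$, $A^{E\prime\prime}=\overline{\partial}^{F}$. The one step you rightly flag as needing care --- that $D=F$ in degree $0$, that the $\theta_{h}$-adjoint of $\overline{\partial}^{F}$ is the $(1,0)$-part of the Chern connection so that $A^{E_{0},2}=R^{F}$, and that $\mathrm{Tr_{s}}$ reduces to $\mathrm{Tr}$ --- is exactly the content of the theorem the paper cites, so your unpacking is consistent with, and no more than a spelled-out version of, the paper's proof.
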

\begin{proof}
Note that
$$
\det\biggl(I+\frac{\sqrt{-1}}{2\pi}R^{F}\biggr)=
\exp\biggl(\mathrm{Tr}
\biggl[\log\biggl(I+\frac{\sqrt{-1}}{2\pi}R^{F}\biggr)\biggr]\biggr).
$$
Comparing the power series expansion formulae \eqref{log} and \eqref{exp}, to finish the proof, it is sufficient to show that $\mathrm{ch_{BC}}(F)$ coincides with $\mathrm{ch_{BC}}(\CF)$.
As a direct consequence of \cite[Theorem 9.4.1]{BSW23}, we get $\hat{c}(\CF)=\hat{c}(F)$.
\end{proof}

\section{Proof of Theorem \ref{thm1}}\label{sec4}
In this section, we first establish Riemann--Roch without denominators for Bott--Chern characteristic classes, and then we give the proof of Theorem \ref{thm1}.
In algebraic geometry, Grothendieck conjectured and proved Riemann--Roch without denominators in characteristic zero and the general case was proved by Jouanolou \cite{Jou70}.
In \cite[Theorem 3.1]{AH62}, Atiyah--Hirzebruch proved a generalization of the Riemann--Roch without denominators for Chern classes of complex manifolds.

\subsection{Riemann--Roch without denominators }\label{s-sec-4-1}
Throughout of this subsection, we assume that $Y$ is a compact complex manifold of dimension $n$ with a closed complex submanifold $X$ of codimension $r$,
and $F$ is a holomorphic vector bundle of rank $k$ over $X$.
Denote by $\CF$ the sheaf of holomorphic sections of $F$ which is a locally free $\CO_{X}$-modules.
Set $\imath:X\hookrightarrow Y$ the natural inclusion.
Then the direct image $\imath_{\ast}\CF$ is a coherent sheaf on $Y$.

We begin with the construction of deformation to the normal bundle for the closed imbedding $\imath:X\hookrightarrow Y$, see \cite[\S 9.2]{BSW23} or \cite[Section 4]{BGS90}.
Consider the pair $(Y\times\mathbb{P}^{1}, X\times\{\infty\})$.
We denote by $\varpi:\mathrm{Bl}_{X\times\{\infty\}}(Y\times\mathbb{P}^{1})
\rightarrow Y\times\mathbb{P}^{1}$ the blow-up of $Y\times\mathbb{P}^{1}$ with the center $X\times\{\infty\}$.
Then the exceptional divisor $\widetilde{E}=\varpi^{-1}(X\times\{\infty\})$ is isomorphic to the projective bundle $P:=\mathbb{P}(N_{X\times\{\infty\}/Y\times\mathbb{P}^{1}})$ and therefore we identify $\widetilde{E}$ with $P$.
Let $\pi_{X}$ and $\pi_{\infty}$ be the projections from $X\times\{\infty\}$ to $X$ and $\infty$, respectively.
There exists an isomorphism of vector bundles
$$
N_{X\times\{\infty\}/Y\times\mathbb{P}^{1}}\cong
\pi^{\ast}_{X}(N)\oplus
\pi^{\ast}_{\infty}(N_{\infty/\mathbb{P}^{1}}).
$$
Put $A=\pi^{\ast}_{X}(N)\otimes
\pi^{\ast}_{\infty}(N^{-1}_{\infty/\mathbb{P}^{1}})$.
Then $P\cong\mathbb{P}(A\oplus\mathbb{C})$ and hence $\widetilde{E}$ can be thought of as the projective completion of the vector bundle
$A$ with the divisor $\mathbb{P}(N)$ at $\infty$.
Set $W=\mathrm{Bl}_{X\times\{\infty\}}(Y\times\mathbb{P}^{1})$.
We have the following blow-up diagram:
\begin{equation}\label{b-u-d-xp}
\vcenter{
\xymatrix@C=1.5cm{
P \ar[d]_{\varrho} \ar@{^{(}->}[r]^{\hat{\jmath}} & W\ar[d]^{\varpi}\\
 X \times\{\infty\}\ar@{^{(}->}[r]^{\hat{\imath}} & Y\times\mathbb{P}^{1}}
 }
\end{equation}
Here $\varrho$ is the restriction of $\varpi$ to $P$.
Combining \eqref{b-u-d-xp} with the blow-up diagram of the pair $(Y, X)$ derives a commutative cube:
\begin{equation}\label{bl-bl}
\vcenter{
\xymatrix@C=0.5cm{
  & \widetilde{E} \ar[rr] \ar'[d][dd]
      &  & W \ar[dd]        \\
  E \ar[ur]^{f}\ar[rr]^{\quad\jmath}\ar[dd]_{\rho}
      &  & \widetilde{Y} \ar[ur]^{g}\ar[dd] ^{\pi}\\
  & X\times\{\infty\} \ar'[r][rr]
      &  & Y\times\mathbb{P}^{1}                \\
  X \ar[rr]^{\imath}\ar[ur]^{\cong}
      &  & Y \ar[ur]^{h}       }
}
\end{equation}
Here $f$, $g$ and $h$ are natural embeddings.
Let $p_{1}$ and $p_{2}$ be the projections from $Y\times\mathbb{P}^{1}$ onto $Y$ and $\mathbb{P}^{1}$, respectively.
Set $q_{1}=
p_{1}\circ\varpi:W\rightarrow Y$ and
$q_{2}=p_{2}\circ\varpi:W\rightarrow\mathbb{P}^{1}$.
Then the fibres of $q_{2}$ are
$$
W_{t}:=q^{-1}_{2}(t)
=
\begin{cases}
P\cup\widetilde{Y}     &  t=\infty; \\
Y,      & t\neq\infty.
\end{cases}
$$
In particular, the intersection of $P$ and $\widetilde{Y}$ equals the exceptional divisor $E\cong\mathbb{P}(N)$.
Observe that $A$ is a holomorphic vector bundle over $X\times\{\infty\}$,
combining the canonical open embedding of $A$ in $P=\mathbb{P}(A\oplus 1)$ with the zero section embedding of $A$ derives a natural map $\overline{\imath}:X\times\{\infty\}\rightarrow A\subset P$.
Put $j_{0}:W_{0}=X\hookrightarrow W$ the inclusion map.
Because of the natural isomorphism  $X\times\{\infty\}\cong X$, we may identify $X\times\{\infty\}$ with $X$.
So that $F$ can be considered as a holomorphic vector bundle over $X\times\{\infty\}$ and $\CF$ is a coherent $\CO_{X\times\{\infty\}}$-modules.
Since $X\times\{\infty\}$ is a hypersurface in $X\times\mathbb{P}^{1}$, the blow-up of $X\times\mathbb{P}^{1}$ along $X\times\{\infty\}$ is identical with $X\times\mathbb{P}^{1}$, which can be considered as an embedded complex submanifold in $W$.
As a result, we have a natural embedding
$l:X\times\mathbb{P}^{1}=\mathrm{Bl}_{X\times\{\infty\}}(X\times\mathbb{P}^{1})\hookrightarrow W$.
The following commutative diagram (Figure \ref{def-nor-diagram}) will be used in tracing the argument in the proof.
\begin{figure}
\centering
\begin{tikzpicture}[scale=0.3]
\draw  (3,21)node[right] {$\mathbb{P}^{1}$};
\draw  (9,18)node[right] {$X\times\mathbb{P}^{1}$};
\draw  (9,11)node[right] {$Y=W_{0}$};
\draw  (18,11)node[right] {$X$};
\draw  (18,18)node[right] {$W$};
\draw  (17,14.6)node[right] {$X\times\mathbb{P}^{1}$};
\draw  (18,4)node[right] {$X$};
\draw  (24.8,4)node[right] {$X\times\mathbb{P}^{1}$};
\draw  (24.8,11)node[right] {$X\times\{\infty\}$};
\draw  (26,18)node[right] {$P$};
\draw  (6,20)node[right] {$p_{2}$};
\draw  (9.5,14.5)node[right] {$p_{1}$};
\draw  (15,11.5)node[right] {$\imath$};
\draw  (16.6,7)node[right] {$\mathrm{id}_{X}$};
\draw  (22,4.7)node[right] {$i_{X}$};
\draw  (22,8)node[right] {$p_{X}$};
\draw  (27,7)node[right] {$i_{\infty}$};
\draw  (22.5,11.5)node[right] {$=$};
\draw  (15,18.5)node[right] {$\varpi$};
\draw  (12,21)node[right] {$q_{2}$};
\draw  (18.8,16)node[right] {$l$};
\draw  (18.8,13)node[right] {$p_{X}$};
\draw  (22.5,18.7)node[right] {$\hat{\jmath}$};
\draw  (25.5,14.6)node[right] {$\varrho$};
\draw  (29.5,14.6)node[right] {$\bar{\imath}$};
\draw  (13,16)node[right] {$q_{1}$};
\draw  (15,14)node[right] {$j_{0}$};
\draw[-latex](9.5,18.2)--(4.3,20.6);
\draw[-latex](11.2,17)--(11.2,12);
\draw[-latex](18,18)--(13.5,18) ;
\draw[-latex] (26,18)-- (20,18);
\draw [-latex](19,18.6)to[out=157, in=0] (4.5,21);
\draw[-latex] (18,11)--(13.5,11);
\draw[-latex] (19,14.9)--(19,17);
\draw[-latex] (19,14)--(19,12);
\draw[-latex] (25,11)--(19.8,11);
\draw[-latex] (27,17.5)--(27,12);
\draw [-latex](29,12)to[out=60, in=0] (27.5,18);
\draw[-latex] (19,4.8)--(19,10);
\draw[-latex] (19.5,4)--(24.5,4);
\draw[-latex] (27,10)--(27,5);
\draw[-latex] (25.8,4.8)--(20,10);
\draw [-latex](18.5,17.5)to[out=-170, in=65] (11.7,12);
\draw [-latex](13,11.7)to[out=-710, in=250] (18.5,17.2);
\end{tikzpicture}
\caption{}
\label{def-nor-diagram}
\end{figure}

Let $S$ be the universal subbundle of the pullback $\varrho^{\ast}(A\oplus\mathbb{C})$ and $Q$ the universal quotient bundle.
Then we have an exact sequence of vector bundles over $P$:
\begin{equation*}
\xymatrix@C=0.5cm{
  0 \ar[r] & S \ar[r]^{} & \varrho^{\ast}(A\oplus\mathbb{C}) \ar[r]^{} & Q \ar[r] & 0. }
\end{equation*}
Let $\sigma:P\rightarrow Q$ be the canonical section determined by the projection of the trivial factor $1\in\mathbb{C}$ to $Q$.
It is noteworthy that $\sigma$ is a holomorphic section satisfying $\sigma^{-1}(0)=X\times\{\infty\}$.
Particularly, the Koszul complex $K(\sigma)=(\wedge^{\bullet}Q^{\vee})$ determined by $\sigma$ gives rise to a locally free resolution of the sheaf $\overline{\imath}_{\ast}\CO_{X\times\{\infty\}}$, where $Q^{\vee}$ is the dual of $Q$.
Since $\varrho^{\ast}F$ is a locally free sheaf on $P$, the functor $-\otimes\varrho^{\ast}F$ is exact and hence $(\wedge^{\bullet}Q^{\vee}\otimes\varrho^{\ast}F)$ yields a projection resolution of the sheaf $\overline{\imath}_{\ast}\CO_{X\times\{\infty\}}\otimes\varrho^{\ast}F$.
By the projection formula, we have
\begin{eqnarray*}
 \overline{\imath}_{\ast}\CO_{X\times\{\infty\}}\otimes\varrho^{\ast}F
 &\cong&  \overline{\imath}_{\ast}(\CO_{X\times\{\infty\}}\otimes
 \overline{\imath}^{\ast}\varrho^{\ast}F)\\
 &\cong& \overline{\imath}_{\ast}(\CO_{X\times\{\infty\}}\otimes (\varrho\circ\overline{\imath})^{\ast}F)\\
 &=& \overline{\imath}_{\ast}\CF,
\end{eqnarray*}
where the last equality results from the fact $\varrho\circ\overline{\imath}=\mathrm{id}_{X\times\{\infty\}}$.
Consequently, we obtain an explicit locally free resolution of $\overline{\imath}_{\ast}\CF$:
\begin{equation}\label{k-r}
\xymatrix@C=0.5cm{
  0 \ar[r] & \wedge^{r}Q^{\vee}\otimes\varrho^{\ast}F \ar[r]^{} & \cdots \ar[r]^{} & Q^{\vee}\otimes\varrho^{\ast}F \ar[r]^{} & \varrho^{\ast}F \ar[r]^{} & \overline{\imath}_{\ast}\CF \ar[r] & 0, }
\end{equation}
where $r=\mathrm{rank}(Q)$.
It follows from the definition of Grothendieck group that the following identity holds in $\mathrm{K}(\DC(P))$:
\begin{equation*}
[\overline{\imath}_{\ast}\CF]=
\sum_{i=0}^{r}(-1)^{i}[\wedge^{i}Q^{\vee}\otimes\varrho^{\ast}F].
\end{equation*}
According to Proposition \ref{whitney}, we get
\begin{equation*}
\BC(\overline{\imath}_{\ast}\CF)=
\prod_{i=0}^{r}\BC(\wedge^{i}Q^{\vee}
\otimes\varrho^{\ast}F)^{(-1)^{i}}.
\end{equation*}

Before continuing, we need a general lemma concerning the Bott--Chern characteristic classes of tensor products.
For an arbitrary complex manifold $X$, let $U$ and $V$ be two holomorphic vector bundles over $X$ with the rank $u$ and $v$, respectively.
Consider the tensor produce of $\wedge^{k}U^{\vee}\otimes V$, for any $0\leq k\leq u$.\

\begin{lem}\label{RR-domi-0}
There exists a unique power series $f(z_{1},\cdots,z_{u}; w_{1},\cdots,w_{v})$ with integer coefficients such that the following identity is valid in $H^{(=)}_{BC}(X, \mathbb{R})$:
\begin{equation*}\label{bar-imath-f-1}
\prod_{i=0}^{u}\BC\bigl(\wedge^{i}U^{\vee}
\otimes V\bigr)^{(-1)^{i}}-1=
\BC_{u}(U)\cdot f(U, V),
\end{equation*}
where $f(U,V)=f\bigl(\BC_{1}(U),\cdots,\BC_{u}(U);
\BC_{1}(V),\cdots,\BC_{v}(V)\bigr)$.
\end{lem}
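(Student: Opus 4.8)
The plan is to reduce the claim to a statement about symmetric functions of Chern roots, exactly as in the classical Riemann--Roch without denominators of Atiyah--Hirzebruch \cite{AH62} and Jouanolou \cite{Jou70}. First I would introduce formal Chern roots: write $\BC(U) = \prod_{i=1}^{u}(1+a_i)$ and $\BC(V) = \prod_{j=1}^{v}(1+b_j)$, where the $a_i, b_j$ are indeterminates and the $\BC_p$ are the elementary symmetric polynomials in them. The splitting principle, valid for Bott--Chern characteristic classes by virtue of the Whitney formula (Proposition \ref{whitney}) together with the naturality under pullback (Proposition \ref{p-b-k}) applied to an iterated flag bundle over $X$, allows us to carry out the entire computation at the level of these formal roots and then descend. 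Under this formalism $\BC(\wedge^i U^{\vee})$ is the $i$-th elementary symmetric function evaluated at the roots $-a_1, \dots, -a_u$ of $U^{\vee}$, so $\BC(\wedge^i U^{\vee}\otimes V) = \prod$ over all size-$i$ subsets $\{k_1 < \cdots < k_i\}\subseteq\{1,\dots,u\}$ and all $j\in\{1,\dots,v\}$ of $\bigl(1 - a_{k_1} - \cdots - a_{k_i} + b_j + (\text{higher order})\bigr)$, the exact expression being the product $\prod_j c\bigl(\wedge^i U^{\vee}\otimes V\bigr)$ in the roots.

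The heart of the argument is then the purely algebraic identity
\begin{equation*}
\prod_{i=0}^{u}\Bigl(\prod_{1\le k_1<\cdots<k_i\le u}\ \prod_{j=1}^{v}(1 - a_{k_1}-\cdots-a_{k_i}+b_j)\Bigr)^{(-1)^i} - 1
\end{equation*}
is divisible, in the ring of formal power series $\mathbb{Z}[[a_1,\dots,a_u,b_1,\dots,b_v]]$, by $a_1 a_2 \cdots a_u = \BC_u(U)$. I would prove this divisibility by the standard specialization trick: the expression is a symmetric power series in the $a_i$ (and separately in the $b_j$), and setting any single variable $a_\ell = 0$ makes it vanish. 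Indeed, when $a_\ell = 0$ the terms of the alternating product pair off — each subset $S$ not containing $\ell$ is matched with $S\cup\{\ell\}$, contributing the same factor $\prod_j(1 - \sum_{k\in S}a_k + b_j)$ with opposite sign in the exponent — so the whole product telescopes to $1$ and the bracket is $0$. Vanishing after setting each $a_\ell = 0$, together with the fact that we are in a UFD-like power series ring where the $a_\ell$ are coprime, forces divisibility by the product $a_1\cdots a_u$. The resulting quotient is a well-defined power series, symmetric in the $a_i$ and in the $b_j$ of total degree at least $0$, hence expressible uniquely as a power series $f$ in the elementary symmetric functions $\BC_1(U),\dots,\BC_u(U)$ and $\BC_1(V),\dots,\BC_v(V)$ with integer coefficients; this is the classical fact that $\mathbb{Z}[a_\bullet]^{S_u}\otimes\mathbb{Z}[b_\bullet]^{S_v} = \mathbb{Z}[\BC_\bullet(U),\BC_\bullet(V)]$.

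Finally I would transfer this formal identity back to $H^{(=)}_{BC}(X,\mathbb{R})$: by the splitting principle there is a flag bundle $p\colon \mathrm{Fl}\to X$ with $p^{\ast}$ injective on Bott--Chern cohomology and $p^{\ast}U$, $p^{\ast}V$ split; the identity holds on $\mathrm{Fl}$ by the root computation above (using multiplicativity of $\BC$ under the exact Koszul-type filtrations, which is Proposition \ref{whitney}), and injectivity of $p^{\ast}$ brings it down to $X$. Uniqueness of $f$ follows because two power series in the $\BC_p$'s agreeing universally (for all $X$, $U$, $V$) must have equal coefficients, tested on products of projective spaces where the relevant Bott--Chern classes are algebraically independent up to the dimension bound. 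The main obstacle I anticipate is purely bookkeeping: organizing the telescoping cancellation in the alternating product cleanly enough to see the vanishing at $a_\ell = 0$, and making sure the splitting principle is legitimately available in the Bott--Chern setting — but the latter is guaranteed once we have the Whitney formula and naturality, both already established in Section \ref{sec3}.
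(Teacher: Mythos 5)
Your proposal is correct and follows essentially the same route as the paper, which simply cites \cite[Lemma 15.3]{Ful98} and observes that the formal Chern-root argument carries over to Bott--Chern classes; you have merely written out in full the content of that citation (splitting principle via the Whitney formula and naturality, the telescoping cancellation at $a_{\ell}=0$ giving divisibility by $a_{1}\cdots a_{u}$, and expression of the quotient in elementary symmetric functions). No gaps.
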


\begin{proof}
The proof of \cite[Lemma 15.3]{Ful98} uses the formal factorization of Chern polynomial into Chern roots and this proof also holds for the Bott--Chern characteristic classes.
\end{proof}

We are ready to state the Riemann--Roch without denominators for Bott--Chern characteristic classes.
\begin{thm}\label{RR-domi}
The following identity is valid in $H^{(=)}_{BC}(Y, \mathbb{R})$:
$$
\hat{c}(\imath_{\ast}\CF)
=1+\imath_{\ast}(f(N,F)),
$$
where $N$ is the normal bundle of $X$ in $Y$ and $f(z_{1},\cdots,z_{r};w_{1},\cdots,w_{k})$ is the unique power series with integer coefficients determined by Lemma \ref{RR-domi-0}.
\end{thm}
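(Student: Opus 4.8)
The plan is to deduce Theorem~\ref{RR-domi} from Lemma~\ref{RR-domi-0} applied to the deformation-to-the-normal-bundle space $W$ constructed above, by a specialization (or ``spreading out'') argument in Bott--Chern cohomology along the two fibres $W_0$ and $W_\infty$ of the map $q_2:W\to\mathbb{P}^1$. First I would apply Lemma~\ref{RR-domi-0} on $P$ to the bundles $U=Q$ (the universal quotient bundle, of rank $r$) and $V=\varrho^{\ast}F$. Combined with the Koszul resolution \eqref{k-r} and the Whitney formula (Proposition~\ref{whitney}), this gives
\begin{equation*}
\BC(\overline{\imath}_{\ast}\CF)-1=\BC_r(Q)\cdot f\bigl(\BC_1(Q),\dots,\BC_r(Q);\BC_1(\varrho^{\ast}F),\dots,\BC_k(\varrho^{\ast}F)\bigr)
\end{equation*}
in $H^{(=)}_{BC}(P,\mathbb{R})$. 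Since $\overline{\imath}:X\times\{\infty\}\hookrightarrow P$ is the zero-section embedding of $A$ into $P=\mathbb{P}(A\oplus\mathbb{C})$, its normal bundle is $A\cong N$ (after the identification $X\times\{\infty\}\cong X$), and $\BC_r(Q)=\varrho^{\ast}\BC_r(N)$ restricted to a neighbourhood of the zero section, so on $P$ the right-hand side is already of the form $\text{(self-intersection class)}\cdot\varrho^{\ast}(\cdots)$, i.e.\ it is $\hat{\jmath}^{\ast}$ of something supported on $P\subset W$.

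The key step is then to transport this identity on $P$ to the identity on $Y$ via the geometry of \eqref{bl-bl} and Figure~\ref{def-nor-diagram}. The bundle $\overline{\imath}_{\ast}\CF$ on $P$ extends to a coherent sheaf $\mathcal{G}$ on $W$ (the pushforward of $\CF$ along $l:X\times\mathbb{P}^1\hookrightarrow W$), whose restriction to the fibre $W_\infty=P\cup\widetilde{Y}$ recovers $\overline{\imath}_{\ast}\CF$ on the $P$-component (and is trivial on the $\widetilde{Y}$-component away from $E$), and whose restriction to a generic fibre $W_t\cong Y$ ($t\neq\infty$) is $\imath_{\ast}\CF$. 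Applying $\hat c$ to $\mathcal{G}$ on $W$ and using naturality (Proposition~\ref{p-b-k}) for the inclusions $j_0:W_0=Y\hookrightarrow W$ and $W_\infty\hookrightarrow W$, together with the fact that the fibre classes $[W_0]$ and $[W_\infty]$ are rationally equivalent on $W$ (they are both $q_2^{\ast}[\mathrm{pt}]$, and a point is unique up to Bott--Chern equivalence on $\mathbb{P}^1$), I obtain that $\hat c(\imath_{\ast}\CF)-1$ on $Y$ and $\hat c(\overline{\imath}_{\ast}\CF)-1$ on $P$ correspond under the appropriate Gysin maps. Unwinding which Gysin/pushforward identifications are in play — namely $q_{1}$ restricted to $W_0$ is $\mathrm{id}_Y$, and $\varrho$ together with $\hat\jmath$ handle the $P$-side — one reads off that the power series $f$ computed on $P$ is exactly the universal power series from Lemma~\ref{RR-domi-0} with $U=N$, $V=F$, and that
\begin{equation*}
\hat{c}(\imath_{\ast}\CF)=1+\imath_{\ast}\bigl(f(N,F)\bigr)
\end{equation*}
in $H^{(=)}_{BC}(Y,\mathbb{R})$, where $\imath_{\ast}$ is the Gysin pushforward in Bott--Chern cohomology.

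The main obstacle I anticipate is the bookkeeping of the Gysin/pushforward maps in Bott--Chern cohomology along the deformation family: one must check that $\hat c$ is compatible with the specialization homomorphism $\sigma_t:H^{(=)}_{BC}(W,\mathbb{R})\to H^{(=)}_{BC}(W_t,\mathbb{R})$ (restriction to a fibre, using that all fibres are homologically equivalent), that this specialization carries the Gysin image of a class on $P$ to the Gysin image of the corresponding class on $X\subset Y$, and that the self-intersection term $\BC_r(Q)\cdot(\cdots)$ on $P$ restricts to a class genuinely supported near $E\cong\mathbb{P}(N)$ so that it pushes forward correctly — this is where the Bott--Chern analogue of the ``key lemma'' in Atiyah--Hirzebruch \cite[Theorem 3.1]{AH62} or \cite[Lemma 15.3]{Ful98} is really needed. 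The underlying cohomological input — that Bott--Chern cohomology has a Gysin pushforward for closed embeddings satisfying the projection formula and base change, and that the class $\hat c$ of a coherent sheaf is natural under pullback (Proposition~\ref{p-b-k}) and behaves well in flat families (available through the GRR of Bismut--Shen--Wei \cite{BSW23}) — is all in place from Sections~\ref{sec2} and~\ref{sec3}, so the work is in assembling these pieces carefully rather than in any new estimate.
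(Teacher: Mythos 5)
Your proposal follows essentially the same route as the paper: apply Lemma \ref{RR-domi-0} to $U=Q$, $V=\varrho^{\ast}F$ via the Koszul resolution on $P$, then transfer the resulting identity from $P$ to $Y$ through the deformation to the normal bundle, using the Poincar\'e--Lelong relation between the fibres $W_{0}$ and $W_{\infty}=P\cup\widetilde{Y}$ and the vanishing of the sheaf on the $\widetilde{Y}$-component (this is exactly the paper's Lemma \ref{key-lem}). The only point stated loosely is how $\BC_{r}(Q)\cdot f(Q,\varrho^{\ast}F)$ becomes $\overline{\imath}_{\ast}f(N,F)$: the paper does this not by identifying $Q$ with $\varrho^{\ast}N$ near the zero section, but by the generalized Poincar\'e--Lelong formula $\BC_{r}(Q)=[X\times\{\infty\}]_{\mathrm{BC}}=\overline{\imath}_{\ast}1$ followed by the projection formula and $\overline{\imath}^{\ast}Q\cong N$ --- a step you flag as needing justification and which is indeed the remaining work.
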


The basic idea of the proof is a variation on the argument of \cite[Theorem 15.3]{Ful98}: use the locally free resolution \eqref{k-r} to prove the assertion for the embedding $\overline{\imath}$, and then apply the deformation to the normal bundle to establish the relationship between $\BC(\overline{\imath}_{\ast}\CF)\in H^{(=)}_{BC}(P, \mathbb{R})$ and $\BC(\imath_{\ast}\CF)\in H^{(=)}_{BC}(Y, \mathbb{R})$.

The following lemma will play an important role in the proof of Theorem \ref{RR-domi}.
\begin{lem}\label{key-lem}
With the notations above, we have
\begin{equation*}
\BC(\imath_{\ast}\CF)=\mu_{\ast}\BC(\overline{\imath}_{\ast}\CF),
\end{equation*}
where $\mu:P\rightarrow Y$ is the composition of $\hat{\jmath}$ and $q_{1}$ in the diagram \eqref{def-nor-diagram}.
\end{lem}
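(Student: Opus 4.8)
The plan is to deduce the identity from a deformation-to-the-normal-bundle argument on the total space $W$, comparing the two ends of the family $q_{2}:W\to\mathbb{P}^{1}$. First I would introduce the coherent sheaf
$$
\mathscr{G}:=l_{\ast}(p_{X}^{\ast}\CF)
$$
on $W$, where $l:X\times\mathbb{P}^{1}\hookrightarrow W$ is the embedded strict transform and $p_{X}:X\times\mathbb{P}^{1}\to X$ is the projection appearing in Figure \ref{def-nor-diagram}. Because $q_{2}\circ l$ is the flat projection $X\times\mathbb{P}^{1}\to\mathbb{P}^{1}$ and $p_{X}^{\ast}\CF$ is locally free, $\mathscr{G}$ is flat over $\mathbb{P}^{1}$, so its derived restriction to each fibre is the naive restriction. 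Over $t\neq\infty$ the fibre is $W_{t}\cong Y$ and $\mathscr{G}|_{W_{t}}=\imath_{\ast}\CF$, while over $t=\infty$ the support $l(X\times\mathbb{P}^{1})$ meets $W_{\infty}=P\cup\widetilde{Y}$ exactly along $l(X\times\{\infty\})$, which is the zero section $\overline{\imath}(X)\subset P$; thus $L\hat{\jmath}^{\ast}\mathscr{G}=\overline{\imath}_{\ast}\CF$ on $P$.

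The decisive geometric input I would verify is that $\mathrm{supp}(\mathscr{G})$ is disjoint from the strict transform $\widetilde{Y}$. Since $q_{2}\circ l$ is the second projection, $l(X\times\mathbb{P}^{1})$ meets $q_{2}^{-1}(\infty)$ only along $l(X\times\{\infty\})$; under the identification $P\cong\mathbb{P}(A\oplus\mathbb{C})$ this locus is the projectivisation of the trivial summand, i.e.\ the zero section of $A$, which is disjoint from the divisor at infinity $E=\mathbb{P}(N)=P\cap\widetilde{Y}$. Hence $l(X\times\mathbb{P}^{1})\cap\widetilde{Y}=\varnothing$, so $Lg^{\ast}\mathscr{G}=0$ for the embedding $g:\widetilde{Y}\hookrightarrow W$. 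A dimension count also shows that $l(X\times\mathbb{P}^{1})$ crosses $P$ transversally in the expected dimension $n-r$, which rules out Tor-contributions and confirms $L\hat{\jmath}^{\ast}\mathscr{G}=\overline{\imath}_{\ast}\CF$.

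With these identifications I would carry out the specialisation. The two fibres are linearly equivalent divisors, $[W_{0}]=[W_{\infty}]$ in $H^{1,1}_{BC}(W,\mathbb{R})$, because they are $q_{2}^{\ast}[0]$ and $q_{2}^{\ast}[\infty]$ and the difference $[0]-[\infty]$ on $\mathbb{P}^{1}$ is $\partial\overline{\partial}$-exact. Multiplying $\BC(\mathscr{G})$ by this relation and applying the projection formula, the naturality of Proposition \ref{p-b-k}, and the functoriality of the Gysin pushforward, the $t=0$ side gives
$$
q_{1\ast}\bigl(\BC(\mathscr{G})\cdot[W_{0}]\bigr)=(q_{1}\circ\nu_{0})_{\ast}\,\nu_{0}^{\ast}\BC(\mathscr{G})=\BC(\imath_{\ast}\CF),
$$
since $q_{1}\circ\nu_{0}=\mathrm{id}_{Y}$ and $\nu_{0}^{\ast}\BC(\mathscr{G})=\BC(\imath_{\ast}\CF)$, where $\nu_{0}:W_{0}\hookrightarrow W$. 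On the $t=\infty$ side, the disjointness $\mathrm{supp}(\mathscr{G})\cap\widetilde{Y}=\varnothing$ forces the specialised class to be carried entirely by $P$, and the resulting contribution is
$$
q_{1\ast}\hat{\jmath}_{\ast}\hat{\jmath}^{\ast}\BC(\mathscr{G})=(q_{1}\circ\hat{\jmath})_{\ast}\BC(\overline{\imath}_{\ast}\CF)=\mu_{\ast}\BC(\overline{\imath}_{\ast}\CF),
$$
using $\mu=q_{1}\circ\hat{\jmath}$ and $\hat{\jmath}^{\ast}\BC(\mathscr{G})=\BC(\overline{\imath}_{\ast}\CF)$. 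Equating the two expressions yields $\BC(\imath_{\ast}\CF)=\mu_{\ast}\BC(\overline{\imath}_{\ast}\CF)$.

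The main obstacle is precisely to show that the second component $\widetilde{Y}$ of the singular fibre $W_{\infty}$ makes no contribution to the specialised class, so that restricting $\BC(\mathscr{G})$ to $W_{\infty}$ and pushing forward to $Y$ reproduces $\mu_{\ast}\BC(\overline{\imath}_{\ast}\CF)$ with no extra terms. The geometric fact $Lg^{\ast}\mathscr{G}=0$ is what localises the specialisation on $P$, but turning this into an equality of Bott--Chern classes requires handling the restriction of a Bott--Chern class to the reducible, singular fibre $W_{\infty}$ and checking that $\BC(\mathscr{G}|_{W_{\infty}})$ is represented on the smooth open locus $P\setminus E$, where it agrees with $\BC(\overline{\imath}_{\ast}\CF)$. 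I would make this rigorous by representing the classes through Bismut--Shen--Wei currents supported on $\mathrm{supp}(\mathscr{G})$, which vanish in a neighbourhood of $\widetilde{Y}$, thereby confining the entire computation to $P$.
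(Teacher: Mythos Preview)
Your argument is essentially the paper's own: the same auxiliary sheaf $\mathscr{G}=l_{\ast}p_{X}^{\ast}\CF$ on $W$, the same identifications of its derived restrictions to $W_{0}$, $P$, and $\widetilde{Y}$ (the paper cites \cite[Proposition~9.1.1]{BSW23} where you argue flatness and transversality directly), and the same Poincar\'e--Lelong specialisation $[W_{0}]=[P]+[\widetilde{Y}]$ followed by $q_{1\ast}$.

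One remark: your closing worry about the reducible fibre is over-engineered. The paper never restricts $\BC(\mathscr{G})$ to $W_{\infty}$ as a whole; it just uses the projection formula for the \emph{smooth} embedding $g:\widetilde{Y}\hookrightarrow W$ to write $\BC(\mathscr{G})\cdot[\widetilde{Y}]=g_{\ast}g^{\ast}\BC(\mathscr{G})$, and then invokes naturality (Proposition~\ref{p-b-k}) to get $g^{\ast}\BC(\mathscr{G})=\BC(Lg^{\ast}\mathscr{G})$, which is trivial because $Lg^{\ast}\mathscr{G}\cong 0$. No support-tracking of Bismut--Shen--Wei currents is required.
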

\begin{proof}
We will prove the assertion following the steps in the proof of  \cite[Theorem 9.3.1]{BSW23}.
Notice that the locally free sheaf $\CF$ is the sheaf of holomorphic sections of $F$ over $X$.
By the commutativity of \eqref{def-nor-diagram}, we have
\begin{equation*}
i^{\ast}_{X}\circ p^{\ast}_{X}\CF=(p_{X}\circ i_{X})^{\ast}\CF=\mathrm{id}^{\ast}_{X}\CF=\CF
\end{equation*}
and
\begin{equation*}
i^{\ast}_{\infty}\circ p^{\ast}_{X}\CF=(p_{X}\circ i_{\infty})^{\ast}\CF\cong\CF
\end{equation*}
in the derived category $\DC(X)$.
Consider the natural embeddings $\overline{\imath}:X\times\{\infty\}\rightarrow W$,  $j_{0}:Y=W_{0}\rightarrow W$, and $\hat{\jmath}:P\rightarrow W$.
There exists a commutative diagram:
\begin{equation*}\label{trans-1}
\vcenter{
\xymatrix@=1.0cm{
  (X\times\mathbb{P}^{1})\cap Y=X \ar[d]_{\imath} \ar[r]^{\quad\quad i_{X}} & X\times\mathbb{P}^{1} \ar[d]_{l} & X\times\{\infty\}= (X\times\mathbb{P}^{1})\cap P \ar[l]_{i_{\infty}\quad\quad\quad}\ar[d]^{\overline{\imath}} \\
  Y=W_{0}\ar[r]^{j_{0}} & W  & P\ar[l]_{\hat{\jmath}}  }
}
\end{equation*}
In particular, $X\times\mathbb{P}^{1}$ and $W_{0}$ are transverse which means $N_{X/Y}=N_{X\times\mathbb{P}^{1}/W}|_{X}$.
Owing to a result \cite[Proposition 9.1.1]{BSW23} by Bismut--Shen--Wei, the following equation holds in $\DC(Y)$:
\begin{equation}\label{imath-f}
Lj^{\ast}_{0}\circ l_{\ast}(p^{\ast}_{X}\CF)
\cong\imath_{\ast}\circ Li^{\ast}_{X}(p^{\ast}_{X}\CF)
=\imath_{\ast}(p_{X}\circ i_{X})^{\ast}\CF\cong\imath_{\ast}\CF.
\end{equation}
Likewise, since $X\times\mathbb{P}^{1}$ and $P$ are transverse we obtain the following equation in $\DC(P)$:
\begin{equation}\label{barim-f}
L\hat{\jmath}^{\ast}\circ l_{\ast}(p^{\ast}_{X}\CF)
\cong\overline{\imath}_{\ast}\circ Li^{\ast}_{\infty}(p^{\ast}_{X}\CF)
=\overline{\imath}_{\ast}(p_{X}\circ i_{\infty})^{\ast}\CF\cong\overline{\imath}_{\ast}\CF.
\end{equation}
Consider the Bott--Chern characteristic classes of \eqref{imath-f} and \eqref{barim-f}.
Using Proposition \ref{p-b-k} deduces
\begin{equation}\label{c-hat-f}
j^{\ast}_{0}\BC( l_{\ast}(p^{\ast}_{X}\CF))=\BC(\imath_{\ast}\CF)
\,\,\,\,\mathrm{and}\,\,\,\,
\hat{\jmath}^{\ast}\BC( l_{\ast}(p^{\ast}_{X}\CF))=\BC(\overline{\imath}_{\ast}\CF).
\end{equation}

Let $\alpha\in\Omega^{(=)}(W,\mathbb{R})$ be a $d^{W}$-closed form representing the Bott--Chern characteristic class $\BC( l_{\ast}(p^{\ast}_{X}\CF))$.
Note that $q_{2}:W\rightarrow\mathbb{P}^{1}$ is a submersion and $W_{0}=q^{-1}_{2}(0)=Y$ and $W_{\infty}=q^{-1}_{2}(\infty)=P\cup\widetilde{Y}$.
Denote by $[W_{0}]=[Y]$ and $[W_{\infty}]=[P]+[\widetilde{Y}]$ the currents of integration on $W_{0}$ and $W_{\infty}$, respectively.
Let $x\in\mathbb{C}$ be the canonical meromorphic coordinate on $\mathbb{P}^{1}$ vanishing at $0$.
Then we have the classical Poincar\'{e}--Lelong equation
$$
\frac{\overline{\partial}^{\mathbb{P}^{1}}\partial^{\mathbb{P}^{1}}}{2\pi\sqrt{-1}}\log\bigl(|z|^{2}\bigr)=[0]-[\infty].
$$
As a result, we get
\begin{eqnarray*}
 \frac{\overline{\partial}^{W}\partial^{W}}{2\pi\sqrt{-1}}\biggl[q^{\ast}_{2}\log\bigl(|z|^{2}\bigr)\biggr]
  &=&q^{\ast}_{2}[0]-q^{\ast}_{2}[\infty] = [q^{-1}_{2}(0)]-[q^{-1}_{2}(\infty)] \\
  &=& [Y]-[P]-[\widetilde{Y}].
\end{eqnarray*}
Furthermore, since $\alpha$ is $d^{W}$-closed we have the following equation of currents on $W$:
\begin{eqnarray}\label{p-l-equ}
 \frac{\overline{\partial}^{W}\partial^{W}}{2\pi\sqrt{-1}}\biggl[\alpha\cdot q^{\ast}_{2}\log\bigl(|z|^{2}\bigr)\biggr]
 &=&\alpha\cdot[Y]-\alpha\cdot[P]-\alpha\cdot[\widetilde{Y}]\nonumber\\
&=& j_{0,\ast}(j^{\ast}_{0}\alpha)-\hat{\jmath}_{\ast}(\hat{\jmath}^{\ast}\alpha)-g_{\ast}(g^{\ast}\alpha),
\end{eqnarray}
where $g:\widetilde{Y}\rightarrow W$ is the natural embedding in \eqref{bl-bl}.
The second equality results from the projection formula.
Taking $q_{1,\ast}$ to \eqref{p-l-equ} gives rise to an equation of currents on $Y$ (cf. \cite[(9.3.11)]{BSW23}):
\begin{eqnarray}\label{p-l-equ-Y}
 \frac{\overline{\partial}^{Y}\partial^{Y}}{2\pi\sqrt{-1}}\biggl[q_{1,\ast}\bigl(\alpha\cdot q^{\ast}_{2}\log\bigl(|z|^{2}\bigr)\bigr)\biggr]
&=&(q_{1}\circ j_{0})_{\ast}(j^{\ast}_{0}\alpha)-(q_{1}\circ\hat{\jmath})_{\ast}(\hat{\jmath}^{\ast}\alpha)-(q_{1}\circ g)_{\ast}(g^{\ast}\alpha)\nonumber\\
&=&j^{\ast}_{0}\alpha-\mu_{\ast}(\hat{\jmath}^{\ast}\alpha)-(q_{1}\circ g)_{\ast}(g^{\ast}\alpha).
\end{eqnarray}
Here the last equality comes from the facts $q_{1}\circ j_{0}=\mathrm{id}_{Y}$ and $\mu=q_{1}\circ\hat{\jmath}$.
On the one hand, we have $\BC(Lg^{\ast}\circ l_{\ast}(p^{\ast}_{X}\CF))=g^{\ast}\BC( l_{\ast}(p^{\ast}_{X}\CF))=\{g^{\ast}\alpha\}$.
On the other hand, since $p^{\ast}_{X}\CF$ is supported on $X\times\mathbb{P}^{1}$ and $\widetilde{Y}\cap (X\times\mathbb{P}^{1})=\emptyset$ we get $Lg^{\ast}\circ l_{\ast}(p^{\ast}_{X}\CF)\cong0$ in $\DC(\widetilde{Y})$.
This implies $\{g^{\ast}\alpha\}=0$ in $H^{(=)}_{BC}(\widetilde{Y}, \mathbb{R})$ and thus $\{(q_{1}\circ g)_{\ast}(g^{\ast}\alpha)\}=0$ in $H^{(=)}_{BC}(Y, \mathbb{R})$.
From \eqref{c-hat-f} and \eqref{p-l-equ-Y}, we obtain the identity $\BC(\imath_{\ast}\CF)=\mu_{\ast}\BC(\overline{\imath}_{\ast}\CF)$ and this completes the proof.
\end{proof}

We are now in a position to prove Theorem \ref{RR-domi}.

\begin{proof}[Proof of Theorem \ref{RR-domi}]
Observe that $\sigma:P\rightarrow Q$ is a holomorphic section such that the zero locus $\sigma^{-1}(0)=X\times\{\infty\}$ having the expected dimension $n-r$.
Denote by $[X\times\{\infty\}]\in\mathfrak{D}^{r,r}(P, \mathbb{R})$ the current of integration over $X\times\{\infty\}$ which represents a Bott--Chern cohomology class in $H_{BC}^{r,r}(\mathfrak{D}^{(=)}(P, \mathbb{R}))$.
Particularly, we have $\overline{\imath}_{\ast}1=[X\times\{\infty\}]_{\mathrm{BC}}$, where $1\in H^{0,0}_{BC}(X\times\{\infty\}, \mathbb{R})$.
Given a Hermitian metric $g^{Q}$ and let $\nabla^{Q}$ be the Chern connection of $g^{Q}$.
The top Chern form $c_{r}(\nabla^{Q})$ is a $d^{P}$-closed $(r,r)$-form on $P$.
On account of the generalized Poincar\'{e}--Lelong formula for holomorphic vector bundles (cf. \cite[Theorem 1.1]{An07} or \cite[Theorem 1.3]{CGL20}), there exists a current $T\in\mathfrak{D}^{r-1,r-1}(P, \mathbb{R})$ such that
$$
c_{r}(\nabla^{Q})-[X\times\{\infty\}]=\partial^{P}\overline{\partial}^{P}T.
$$
This implies that the top Bott--Chern characteristic class $\BC_{r}(Q)$ and the Bott--Chern cohomology class $[X\times\{\infty\}]_{\mathrm{BC}}$ are identical under the smoothing of Bott--Chern cohomology
$$
H^{\ast,\ast}_{BC}(P, \mathbb{R})\cong
H^{\ast,\ast}_{BC}(\mathfrak{D}^{(=)}(P, \mathbb{R})).
$$
Due to Lemma \ref{RR-domi-0}, there exists a unique power series $f(z_{1},\cdots,z_{r}; w_{1},\cdots,w_{k})$ with integer coefficients such that
\begin{equation}\label{b-i-f}
\BC(\overline{\imath}_{\ast}\CF)-1=
\prod_{i=0}^{r}\BC\bigl(\wedge^{i}Q^{\vee}
\otimes\varrho^{\ast}F\bigr)^{(-1)^{i}}-1=
\BC_{r}(Q)\cdot f(Q, \varrho^{\ast}F).
\end{equation}
Using the projection formula (cf. \cite[Theorem 2.14]{Dem12}), we have
\begin{eqnarray}\label{bar-imath-f-2}
\BC_{r}(Q)\cdot f(Q, \varrho^{\ast}F)
  &=& [X\times\{\infty\}]_{\mathrm{BC}}\cdot f(Q, \varrho^{\ast}F) \nonumber\\
  &=& \overline{\imath}_{\ast}1\cdot f(Q, \varrho^{\ast}F) \nonumber\\
  &=& \overline{\imath}_{\ast}\bigl(1\cdot\overline{\imath}^{\ast} f(Q, \varrho^{\ast}F)\bigr)\\
  &=&\overline{\imath}_{\ast}f(\overline{\imath}^{\ast}Q, \overline{\imath}^{\ast}\varrho^{\ast}F)\nonumber\\
  &=&\overline{\imath}_{\ast}f(N, F).\nonumber
\end{eqnarray}
Here the last equality results from the facts $\overline{\imath}^{\ast}Q\cong N$ and $\overline{\imath}^{\ast}\varrho^{\ast}F=F$.
From \eqref{b-i-f} and \eqref{bar-imath-f-2}, we get
\begin{equation}\label{bar-imath-f-3}
\BC(\overline{\imath}_{\ast}\CF)=1+\overline{\imath}_{\ast}f(N, F).
\end{equation}
Notice that $\mu\circ\overline{\imath}=q_{1}\circ\hat{\jmath}\circ\overline{\imath}=\imath$.
Combining Lemma \ref{key-lem} with \eqref{bar-imath-f-3} yields to the conclusion
\begin{equation*}
\BC(\imath_{\ast}\CF)=1+\mu_{\ast}\circ\overline{\imath}_{\ast}(f(N,F))
=1+\imath_{\ast}(f(N,F))
\end{equation*}
and this completes the proof.
\end{proof}

The following example is an analogy of \cite[Example 15.3.1]{Ful98}.
\begin{ex}
Observe that the Bott--Chern cohomology class $f(N,F)$ can be written as
$$
f(N,F)=\sum_{q\geq0}f_{q}(N,F),
$$
where $f_{q}(N,F)\in H^{q,q}_{BC}(X, \mathbb{R})$.
So we get
$
\BC_{q}(\imath_{\ast}\CF)=\imath_{\ast}(f_{q-r}(N, F))
$
and hence the vanishing result $\BC_{q}(\imath_{\ast}\CF)=0$ for any $1\leq q\leq r-1$.
From the definition of $f(N,F)$, we obtain
$$
f_{0}(N,F)=(-1)^{r-1}(r-1)!\,k.
$$
This implies
$$
\BC_{r}(\imath_{\ast}\CF)=(-1)^{r-1}(r-1)!\,k\cdot\imath_{\ast}(1)=(-1)^{r-1}(r-1)!k\cdot[X]_{\mathrm{BC}},
$$
where $1\in H^{0,0}_{BC}(X,\mathbb{R})$.
Particularly, if $\CF=\CO_{X}$ then we have
$$
\BC_{r}(\imath_{\ast}\CO_{X})=(-1)^{r-1}(r-1)!\cdot[X]_{\mathrm{BC}}.
$$
\end{ex}

\subsection{Proof of Theorem \ref{thm1}}
The basic idea of the proof is the same as \cite[Theorem 2]{Por60}, see also \cite[Theorem 15.4]{Ful98}.
Let $Y$ be a compact complex manifold with dimension $n$ and $\imath:X\hookrightarrow Y$ a closed complex submanifold with codimension $r\geq2$.
Set $N=N_{X/Y}$.
Recall the construction of the blow-up of $Y$ along $X$.
There exists a commutative diagram as follows.
\begin{equation*}
\vcenter{
\xymatrix@=1.0cm{
E \ar[d]_{\rho} \ar@{^{(}->}[r]^{\jmath} & \widetilde{Y}\ar[d]^{\pi}\\
 X \ar@{^{(}->}[r]^{\imath} & Y}
 }
\end{equation*}
Here $E\cong\mathbb{P}(N)$, $\jmath: E \hookrightarrow \widetilde{Y}$ is the inclusion and $\rho=\pi|_{E}$.
From definition, we have $N_{E/\widetilde{Y}}\cong\CO_{E}(-1)$ and the tautological exact sequence $E$:
\begin{equation}\label{tau-ex}
\xymatrix@C=0.5cm{
  0 \ar[r] & \CO_{E}(-1) \ar[r]^{} & \rho^{\ast}N \ar[r]^{} & Q_{E} \ar[r] & 0,}
\end{equation}
where $Q_{E}$ is the universal quotient bundle on $E$.
Tensoring $\CO_{E}(1)$ to \eqref{tau-ex} deduces
\begin{equation}\label{tau-ex-1}
\xymatrix@C=0.5cm{
  0 \ar[r] & \CO_{E} \ar[r]^{} & \rho^{\ast}N\otimes\CO_{E}(1) \ar[r]^{} & Q_{E}\otimes\CO_{E}(1) \ar[r] & 0.}
\end{equation}

To finish the proof, we need the following lemma, see \cite[Lemma 15.4]{Ful98} for non-singular algebraic varieties.
\begin{lem}\label{4-ext-seq}
There are exact sequences:
\begin{itemize}
\item[(i)] $\xymatrix@C=0.5cm{
  0 \ar[r] & \CO_{E} \ar[r]^{} & \rho^{\ast}N\otimes\CO_{E}(1) \ar[r]^{} & TE \ar[r]^{} &\rho^{\ast}TX \ar[r] & 0 };$
\item[(ii)] $\xymatrix@C=0.5cm{
  0 \ar[r] & TE \ar[r]^{} &\jmath^{\ast}T\widetilde{Y} \ar[r]^{} &\CO_{E}(-1) \ar[r] & 0 };$
\item[(iii)] $\xymatrix@C=0.5cm{
  0 \ar[r] & T\widetilde{Y} \ar[r]^{} &\pi^{\ast}TY \ar[r]^{} &\jmath_{\ast}\mathcal{Q}_{E} \ar[r] & 0 }.$
\end{itemize}
Here $\mathcal{Q}_{E}$ is the sheaf of holomorphic sections of $Q_{E}$ and the third is an exact sequence of sheaves on $\widetilde{Y}$.
\end{lem}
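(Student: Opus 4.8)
The plan is to establish the three exact sequences in Lemma~\ref{4-ext-seq} one at a time, using the geometry of the blow-up and the projective bundle structure of $E\cong\mathbb{P}(N)$.

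For (i): this is a local statement along $E$ relating the tangent bundle of the exceptional divisor to that of the center. Since $\rho:E=\mathbb{P}(N)\to X$ is a projective bundle, the relative Euler sequence reads $0\to\CO_E\to\rho^{\ast}N\otimes\CO_E(1)\to T_{E/X}\to 0$ (using that the relative tangent bundle of $\mathbb{P}(N)$ is $\rho^{\ast}N\otimes\CO_E(1)/\CO_E$, with the convention on $\CO_E(1)$ fixed by \eqref{tau-ex}). Splicing this with the differential of $\rho$, i.e.\ the short exact sequence $0\to T_{E/X}\to TE\to\rho^{\ast}TX\to 0$, yields the four-term sequence in (i). The only care needed is to check the sign/twist convention matches \eqref{tau-ex}--\eqref{tau-ex-1}, which is bookkeeping.

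For (ii): this is the normal bundle sequence for the inclusion $\jmath:E\hookrightarrow\widetilde{Y}$, namely $0\to TE\to\jmath^{\ast}T\widetilde{Y}\to N_{E/\widetilde{Y}}\to 0$, combined with the standard identification $N_{E/\widetilde{Y}}\cong\CO_E(-1)$ for a blow-up along a smooth center (already recalled just before the lemma). For (iii): this is the most substantive of the three. One compares $T\widetilde{Y}$ with $\pi^{\ast}TY$; away from $E$ the map $d\pi$ is an isomorphism, so the cokernel is a sheaf supported on $E$. The claim is that this cokernel is $\jmath_{\ast}\mathcal{Q}_E$, where $Q_E$ is the universal quotient bundle from \eqref{tau-ex}. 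The cleanest route is to work locally: choose coordinates in which $X=\{z_1=\cdots=z_r=0\}$, write the blow-up chart explicitly, and compute $d\pi$ on tangent vectors; the kernel of $\pi^{\ast}TY|_E\to$ (cokernel) will be seen to be $T\widetilde{Y}|_E$ cut out exactly by the rank-$(r-1)$ quotient $Q_E$ of $\rho^{\ast}N$. Alternatively one can deduce (iii) formally from (i) and (ii): restricting $0\to T\widetilde{Y}\to\pi^{\ast}TY\to\mathcal{C}\to 0$ to $E$, using $\pi|_E=\rho$ so $\pi^{\ast}TY|_E=\rho^{\ast}TX\oplus\rho^{\ast}N$ (after choosing a splitting, or working in $K$-theory), and combining the three short exact sequences from (i) and (ii) to identify $\mathcal{C}|_E$ with $Q_E$; then observe $\mathcal{C}$ is annihilated by the ideal of $E$ (since $d\pi$ is an isomorphism of first-order along $E$ in the transverse direction to the blow-down... more precisely the cokernel is already a $\CO_E$-module), so $\mathcal{C}=\jmath_{\ast}(\mathcal{C}|_E)=\jmath_{\ast}\mathcal{Q}_E$.

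The main obstacle I expect is part (iii): verifying that the cokernel sheaf $\mathcal{C}$ is genuinely $\jmath_{\ast}$ of a locally free sheaf on $E$ (rather than having some thickening or torsion along $E$) and pinning down that this locally free sheaf is precisely $Q_E$ with the correct twist. This requires either a careful local coordinate computation in a blow-up chart or a clean $K$-theoretic/functorial argument; the reference \cite[Lemma 15.4]{Ful98} handles the algebraic case and the same local computation goes through verbatim in the complex-analytic setting since blow-ups of complex manifolds along submanifolds are locally modeled on the algebraic blow-up of $\mathbb{C}^n$ along a linear subspace. Parts (i) and (ii) are standard Euler and normal-bundle sequences and should be essentially immediate once conventions are fixed.
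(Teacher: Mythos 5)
Your outline is correct, and the paper itself offers no proof of this lemma --- it simply cites \cite[Lemma 15.4]{Ful98}, whose proof is exactly the argument you describe (relative Euler sequence spliced with $d\rho$ for (i), the normal bundle sequence together with $N_{E/\widetilde{Y}}\cong\CO_{E}(-1)$ for (ii), and a local blow-up-chart computation of $\operatorname{coker}(d\pi)$ for (iii)), transplanted verbatim to the analytic category as you note. Your flagged concern about (iii) is the right one, and your primary route (the explicit chart computation, which shows the cokernel is scheme-theoretically an $\CO_{E}$-module locally isomorphic to $\CO_{E}^{\oplus(r-1)}$ and globally $Q_{E}$) resolves it; the purely $K$-theoretic alternative you mention would not by itself pin down the actual cokernel sheaf.
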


Consider the natural embedding $\jmath:E\rightarrow\widetilde{Y}$.
On the one hand, applying the Whitney formula (Proposition \ref{whitney}) to the third exact sequence in Lemma \ref{4-ext-seq} implies
\begin{equation}\label{426}
\pi^{\ast}\BC(Y)=\BC(\widetilde{Y})\cdot\BC(\jmath_{\ast}\mathcal{Q}_{E}).
\end{equation}
On the other hand, due to Theorem \ref{RR-domi}, we obtain
\begin{equation}\label{427}
\BC(\jmath_{\ast}\mathcal{Q}_{E})=1+\jmath_{\ast}(f(\CO_{E}(-1), Q_{E}))
\end{equation}
since $N_{E/\widetilde{Y}}\cong\CO_{E}(-1)$.
Combining \eqref{426} with \eqref{427} deduces
\begin{eqnarray*}
\pi^{\ast}\BC(Y)-\BC(\widetilde{Y})
  &=&\BC(\widetilde{Y})\cdot\jmath_{\ast}(f(\CO_{E}(-1), Q_{E}))= \jmath_{\ast}\bigl[\jmath^{\ast}\BC(\widetilde{Y})\cdot f(\CO_{E}(-1), Q_{E})\bigr].
\end{eqnarray*}
From Lemma \ref{RR-domi-0}, in which $U$ and $V$ are replaced by $\CO_{E}(-1)$ and $Q_{E}$ respectively, we have
\begin{equation}\label{428}
 \frac{\BC(Q_{E})}{\BC(\CO_{E}(1)\otimes Q_{E})}-1=\BC_{1}(\CO_{E}(-1))\cdot f(\CO_{E}(-1), Q_{E}).
\end{equation}
Applying the Whitney formula to the first and the second exact sequences in Lemma \ref{4-ext-seq} derives
$$
\BC(E)=\BC(\rho^{\ast}N\otimes\CO_{E}(1))\cdot\rho^{\ast}\BC(X)
$$
and
$$
\jmath^{\ast}\BC(\widetilde{Y})=\BC(E)\cdot\BC(\CO_{E}(-1))
$$
which implies
\begin{equation}\label{429}
\jmath^{\ast}\BC(\widetilde{Y})=\BC(\rho^{\ast}N\otimes\CO_{E}(1))\cdot\rho^{\ast}\BC(X)\cdot\BC(\CO_{E}(-1)).
\end{equation}
Using the Whitney formula again, from \eqref{tau-ex} and \eqref{tau-ex-1}, we get
\begin{equation}\label{430}
\BC(\rho^{\ast}N)=\BC(\CO_{E}(-1))\cdot\BC(Q_{E})
\end{equation}
and
\begin{equation}\label{431}
\BC(\rho^{\ast}N\otimes\CO_{E}(1))=\BC(Q_{E}\otimes\CO_{E}(1)).
\end{equation}
Put $\zeta=\BC_{1}(\CO_{E}(1))$.
By \eqref{428}-\eqref{431}, we have
$$
\jmath^{\ast}\BC(\widetilde{Y})\cdot f(\CO_{E}(-1), Q_{E})=\rho^{\ast}\BC(X)\cdot\alpha
$$
where
$$
\alpha=\frac{1}{\zeta}\cdot\biggl[\rho^{\ast}\BC(N)-(1-\zeta)\cdot\BC(\rho^{\ast}N\otimes\CO_{E}(1))\biggr].
$$
It remains to compute the Bott--Chern characteristic classes of the tensor product $\rho^{\ast}N\otimes\CO_{E}(1)$.
Based on the projective bundle formula for Bott--Chern cohomology (cf. \cite[Corollary 3.3]{YY23}), we have the splitting construction for Bott--Chern characteristic classes of holomorphic vector bundles, which is analogous to the Chern classes of vector bundles over algebraic schemes (cf. \cite[Page 51]{Ful98}).
Consequently, the splitting principle for Bott--Chern characteristic classes holds.
Assume that the Bott--Chern polynomial of $\rho^{\ast}N$ is factored as
$$
\BC(\rho^{\ast}N, t):=\sum^{r}_{k=0}\BC_{k}(\rho^{\ast}N)\cdot t^{k}=\prod^{r}_{k=1}(1+\alpha_{k}t),
$$
we call $\alpha_{0},\cdots,\alpha_{k}$ the Bott--Chern roots of $\rho^{\ast}N$.
Observe that the Bott--Chern root of $\CO_{E}(1)$ is  $\zeta$.
By definition, the Bott--Chern roots of the tensor product $\rho^{\ast}N\otimes\CO_{E}(1)$ are the sums $\alpha_{0}+\zeta,\cdots,\alpha_{k}+\zeta$.
Denote by $\mathfrak{s}_{k}(\alpha_{1},\cdots,\alpha_{r} )$ the $k$-th elementary symmetric function of $\alpha_{1},\cdots,\alpha_{r} $ which is equal to the Bott--Chern characteristic class $\BC_{k}(\rho^{\ast}N)$.
Due to the identity
$$
\prod^{r}_{k=1}\bigl(1+(\zeta+\alpha_{k})\cdot t\bigr)=\sum^{r}_{k=0}(1+\zeta\cdot t)^{r-k}\mathfrak{s}_{k}(\alpha_{1},\cdots,\alpha_{r} )\cdot t^{k},
$$
we get
$$
\BC(\rho^{\ast}N\otimes\CO_{E}(1))=\sum^{r}_{i=0}(1+\zeta)^{i}\rho^{\ast}\hat{c}_{r-i}(N).
$$
Consequently, we can rewrite $\alpha$ as
\begin{eqnarray*}
\alpha&=&\frac{1}{\zeta}\cdot\biggl[\sum^{r}_{k=0}\rho^{\ast}\hat{c}_{r-k}(N)
-(1-\zeta)\sum^{r}_{k=0}(1+\zeta)^{k}\rho^{\ast}\hat{c}_{r-k}(N)\biggr]\\
&=& \sum^{r}_{k=0}\biggl[\rho^{\ast}\BC_{r-k}(N)\cdot\biggl(\sum^{k}_{i=0}\binom{i}{k}\zeta^{i}-\sum^{k}_{i=1}\binom{i}{k}\zeta^{i-1}\biggr)\biggr].
\end{eqnarray*}
This finishs the proof of Theorem \ref{thm1}.
\begin{rem}
As a direct consequence of the GRR for closed embeddings \cite[Theorem 9.3.1]{BSW23}, we get the blow-up formula of Chern character:
$$
\pi^{\ast}\ch(Y)
      -\ch(\widetilde{Y})
      =\jmath_{\ast}\biggl
      [\frac{1-\exp(-\zeta)}{\zeta}\bigl(\rho^{\ast}\ch(N)
      -\exp(\zeta)\bigr)
      \biggr].
$$
\end{rem}
\section{Rational case}\label{sec5}
Let $X$ be a compact complex manifold of complex dimension $n$.
For any $0\leq p,q\leq n$, the rational Bott--Chern complex $\mathcal{B}_{p,q,\mathbb{Q}}^{\bullet}$ is defined to be:
\begin{equation*}
\xymatrix@C=0.3cm{
0\ar[r]& \mathbb{Q}\ar[r]^{\Delta\quad\quad} & \CO_{X}\oplus \overline{\CO}_{X} \ar[r]^{} &\Omega_{X}^{1}\oplus \bar{\Omega}_{X}^{1}
  \ar[r]^{}& \cdots  \ar[r]^{}& \Omega_{X}^{p-1}\oplus \overline{\Omega_{X}^{p-1} } \ar[r]^{} & \overline{\Omega_{X}^{p}} \ar[r]^{} & \cdots  \overline{\Omega_{X}^{q-1}} \ar[r]^{} & 0,}
\end{equation*}
where  $\Delta$ is multiplication by 1 for the first component and multiplication by -1 for the second component (cf. \cite{Sch07}).
The $(p+q)$-th hypercohomology of the complex above $H^{p,q}_{BC}(X,\mathbb{Q}):=\mathbb{H}^{p+q}(X, \mathcal{B}_{p,q,\mathbb{Q}}^{\bullet})$ is called the {\it rational Bott--Chern cohomology} of bidegree $(p,q)$.
It is noteworthy that the rational Bott--Chern complex is quasi-isomorphic to a complex formed by locally integral currents and distributions.
The latter complex is soft which implies that the rational Bott--Chern cohomology can be represented by global sections of this complex (cf. \cite[Remark 1]{Wu23}).
Set
$$
H^{(=)}_{BC}(X,\mathbb{Q})=\bigoplus_{0\leq p\leq n}H^{p,p}_{BC}(X,\mathbb{Q}).
$$
Using the  axiomatic approach of Grivaux \cite{Gri10}, the first named author constructed a rational Chern character $\chr:K_{0}(X)\rightarrow H^{(=)}_{BC}(X, \mathbb{Q})$, where $K_{0}(X)$ is the Grothendieck group of the coherent sheaves on $X$.
As a result, for any coherent $\CO_{X}$-modules $\CF$ over $X$, we can define its Chern classes as elements in $H^{(=)}_{BC}(X,\mathbb{Q})$ as follows.

Given formal variables $\underline{x}:=(x_{1},\cdots,x_{r})$, for each $k\geq0$, recall that the Newton sum $S_{k}(\underline{x})$ is defined to be
$$
S_{k}(\underline{x})=\frac{1}{k!}(x^{k}_{1}+\cdots+x^{k}_{r}),
$$
and the symmetric sums $\sigma_{k}(\underline{x})$ is defined by setting
$$
\prod_{i=1}^{r}(t+x_{i})=\sum_{k=0}^{r}\sigma_{k}(\underline{x})t^{r-k}.
$$
Here $\sigma_{k}(\underline{x})=0$ when $k>r$.
In particular, for any $p\geq0$,  there exists $P_{m}$ and $Q_{m}$ in $\mathbb{Q}[T_{1},\cdots,T_{n}]$ characterized by the identities $S_{m}(\underline{x})=
P_{m}(\sigma_{1}(\underline{x}),\cdots,\sigma_{n}(\underline{x}))$
and
$\sigma_{m}(\underline{x})=
Q_{m}(S_{1}(\underline{x}),\cdots, S_{n}(\underline{x}))$.
Put
$$
\chr(\CF)=\sum_{p=0}^{n}\mathrm{ch}_{p}(\CF)_{\mathbb{Q}},
$$
where $\mathrm{ch}_{p}(\CF)_{\mathbb{Q}}\in H^{p,p}_{BC}(X, \mathbb{Q})$.
The \emph{rational Bott--Chern characteristic classes} of $\CF$ are defined to be $\BC_{0}(\CF)_{\mathbb{Q}}=1$ and
$$
\BC_{p}(\CF)_{\mathbb{Q}}:
=Q_{p}(\mathrm{ch}_{1}(\CF)_{\mathbb{Q}},\cdots,
\mathrm{ch}_{n}(\CF)_{\mathbb{Q}}),
$$
when $1\leq p\leq n$.
Naturally, one may wonder if the blow-up formula for rational Bott--Chern characteristic classes still hold.
In the rest of this section, we will show that the arguments in the proof of Theorem \ref{thm1} is also valid for the rational Bott--Chern characteristic classes.

Consider $\DC(X)$ the derived category of bounded complexes of $\CO_{X}$-modules having coherent cohomology.
Denote by $K_{0}(\DC(X))$ the associated Grothendieck group.
There exists a natural morphism
$$
\DC(X)\ni\CF^{\bullet}\longmapsto \sum_{k}(-1)^{k}[\mathcal{H}^{k}(\CF^{\bullet})]\in K_{0}(X)
$$
which induces a canonical isomorphism
$K_{0}(\DC(X))\cong K_{0}(X)$ (cf. \cite[Lemma 13.28.2]{Sta}).
Combining with $\chr:K_{0}(X)\rightarrow H^{(=)}_{BC}(X, \mathbb{Q})$ derives the following definition.
\begin{defn}\label{t-r-b-c}
For any $\CF^{\bullet}\in\DC(X)$, the rational Bott--Chern character of $\CF^{\bullet}$ is defined to be:
$$
\chr(\CF^{\bullet})=\sum_{k}(-1)^{k}\chr(\mathcal{H}^{k}(\CF^{\bullet}))
\in H^{(=)}_{BC}(X, \mathbb{Q}).
$$
Moreover, the (total) rational Bott--Chern class of $\CF^{\bullet}$ is given by
$$
\BC(\CF^{\bullet})_{\mathbb{Q}}
=\prod_{k}\bigl(\BC(\mathcal{H}^{k}(\CF^{\bullet}))_{\mathbb{Q}}\bigr)^{(-1)^{k}}
\in H^{(=)}_{BC}(X, \mathbb{Q}).
$$
\end{defn}
Assume that $Y$ is another compact complex manifold and $f:X\rightarrow Y$ is a holomorphic map.
Then we get an induced homomorphism of Grothendieck groups:
\begin{eqnarray*}
  f^{!}:K_{0}(Y) &\longrightarrow & K_{0}(X)\\
  { [\CF]}&\longmapsto&
  \sum_{k}(-1)^{k}[\mathcal{H}^{k}(Lf^{\ast}\CF)].
\end{eqnarray*}
On the one hand, it follows from the definition of the functor $Lf^{\ast}$ and the morphism $f^{!}$ that there is a commutative diagram:
\begin{equation}\label{k-comm-0}
\vcenter{
\xymatrix{
   \DC(Y)\ar[d]_{} \ar[r]^{Lf^{\ast}} & \DC(X) \ar[d]^{} \\
  K_{0}(Y)  \ar[r]^{f^{!}} & K_{0}(X).   }
  }
\end{equation}
On the other hand, by the axiomic approach of Grivaux, the following diagram is commutative.
\begin{equation}\label{k-comm-1}
\vcenter{
\xymatrix{
  K_{0}(Y) \ar[d]_{\chr} \ar[r]^{f^{!}} & K_{0}(X) \ar[d]^{\chr} \\
  H^{(=)}_{BC}(Y, \mathbb{Q})  \ar[r]^{f^{\ast}} & H^{(=)}_{BC}(X, \mathbb{Q})  }
  }
\end{equation}
From \eqref{k-comm-0} and \eqref{k-comm-1}, for any $\CF^{\bullet}\in\DC(Y)$, the following identity is valid:
$$
\chr(Lf^{\ast}\CF^{\bullet})=f^{\ast}\chr(\CF^{\bullet}).
$$
As a direct consequence, the identity in Proposition \ref{p-b-k} holds in the rational case, i.e.,
$$
\hat{c}(Lf^{\ast}\CF^{\bullet})_{\mathbb{Q}}
=f^{\ast}\hat{c}(\CF^{\bullet})_{\mathbb{Q}}.
$$

Next we will show that the Whitney formula holds in the rational case.
For any short exact sequence of coherent $\CO_{X}$-modules
$\xymatrix@C=0.3cm{
  0 \ar[r] & \CE \ar[r]^{} & \CF\ar[r]^{} & \CG\ar[r] & 0 ,}$
according to \cite[Corollary 1]{Wu23} the identity
$\chr(\CF)=\chr(\CE)+\chr(\CG)$ holds, and therefore we get
$\BC(\CF)_{\mathbb{Q}}=\BC(\CE)_{\mathbb{Q}}\cdot\BC(\CG)_{\mathbb{Q}}$.
Consider an exact triangle in the category $\mathrm{D^{b}_{coh}}(X)$:
 $\CE^{\bullet}\rightarrow\CF^{\bullet}\rightarrow\CG^{\bullet}
\rightarrow\CE^{\bullet}[1].$
There is a long exact sequence of cohomology sheaves:
\begin{equation}\label{l-ex-coh}
\xymatrix@C=0.5cm{
   \cdots\ar[r] & \mathcal{H}^{k}(\CE^{\bullet}) \ar[r]^{} & \mathcal{H}^{k}(\CF^{\bullet}) \ar[r]^{} & \mathcal{H}^{k}(\CG^{\bullet}) \ar[r]^{} & \mathcal{H}^{k+1}(\CE^{\bullet}) \ar[r]^{} & \cdots. }
\end{equation}
From Definition \ref{t-r-b-c} and the exactness of \eqref{l-ex-coh}, we obtain the Whitney formula:
$$
\hat{c}(\CE^{\bullet})_{\mathbb{Q}}\cdot \hat{c}(\CG^{\bullet})_{\mathbb{Q}}=
\hat{c}(\CF^{\bullet})_{\mathbb{Q}}.
$$

In the proof of Riemann--Roch without denominators (Theorem \ref{RR-domi}), the key points are: the generalized Poincar\'{e}--Lelong formula which relates the top Bott--Chern characteristic classes to the cycle classes associated to the zero locus of some holomorphic sections and Lemma \ref{key-lem}.
Firstly, we consider the top rational Bott--Chern characteristic class of a holomorphic vector bundle.
Let $F$ be a holomorphic vector bundle of rank $r$ over the complex manifold $X$.
The following lemma shows that the top rational Bott--Chern characteristic class $\BC_{r}(F)_{\mathbb{Q}}\in H^{r,r}_{BC}(X,\mathbb{Q})$ can be represented by the cycle class associated to the zero locus of some holomorphic sections.
\begin{lem}\label{r-p-l}
Let $s:X\rightarrow F$ be a holomorphic section such that the schematic zero locus $Z(s)$ is smooth.
Then $\BC_{r}(F)_{\mathbb{Q}}$ is equal to the cycle class associated to $Z(s)$ in the rational Bott--Chern cohomology.
\end{lem}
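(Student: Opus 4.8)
The plan is to reduce the statement to the structure sheaf $\CO_{Z(s)}$ through its Koszul resolution, compute $\hat{c}_{r}(\imath_{\ast}\CO_{Z(s)})_{\mathbb{Q}}$ formally, and then identify the resulting class with the cycle class of $Z(s)$. Since the schematic zero locus $Z:=Z(s)$ is smooth, $s$ is a regular section of $F$, so $Z$ has pure codimension $r$ and the Koszul complex $(\wedge^{\bullet}F^{\vee},\iota_{s})$ is a finite locally free resolution of $\imath_{\ast}\CO_{Z}$, where $\imath:Z\hookrightarrow X$. Hence $[\imath_{\ast}\CO_{Z}]=\sum_{i=0}^{r}(-1)^{i}[\wedge^{i}F^{\vee}]$ in $K_{0}(\DC(X))\cong K_{0}(X)$, and the Whitney formula in the rational setting (established above) gives $\hat{c}(\imath_{\ast}\CO_{Z})_{\mathbb{Q}}=\prod_{i=0}^{r}\hat{c}(\wedge^{i}F^{\vee})_{\mathbb{Q}}^{(-1)^{i}}$. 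The proof of Lemma \ref{RR-domi-0} is a formal manipulation of Bott--Chern roots, and the splitting principle for rational Bott--Chern characteristic classes is available from the projective bundle formula for Bott--Chern cohomology (cf. \cite[Corollary 3.3]{YY23}); thus Lemma \ref{RR-domi-0} applied with $U=F$ and $V=\CO_{X}$ holds over $\mathbb{Q}$, giving $\hat{c}(\imath_{\ast}\CO_{Z})_{\mathbb{Q}}-1=\hat{c}_{r}(F)_{\mathbb{Q}}\cdot f(F,\CO_{X})$ for the universal integral power series $f$ of that lemma. Reading off the bidegree $(r,r)$ component and recalling (as in the Example after Theorem \ref{RR-domi}) that the constant term of $f$ equals $(-1)^{r-1}(r-1)!$, one obtains $\hat{c}_{j}(\imath_{\ast}\CO_{Z})_{\mathbb{Q}}=0$ for $1\le j\le r-1$ and $\hat{c}_{r}(\imath_{\ast}\CO_{Z})_{\mathbb{Q}}=(-1)^{r-1}(r-1)!\,\hat{c}_{r}(F)_{\mathbb{Q}}$.

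\emph{Identification with the cycle class.} It remains to prove $\hat{c}_{r}(\imath_{\ast}\CO_{Z})_{\mathbb{Q}}=(-1)^{r-1}(r-1)!\,[Z]_{\mathrm{BC},\mathbb{Q}}$, where $[Z]_{\mathrm{BC},\mathbb{Q}}=\imath_{\ast}(1)\in H^{r,r}_{BC}(X,\mathbb{Q})$ is the class of the current of integration (well defined since $H_{BC}(\,\cdot\,,\mathbb{Q})$ is computed by a soft complex of locally integral currents and distributions, cf. \cite[Remark 1]{Wu23}); combined with the previous step this forces $\hat{c}_{r}(F)_{\mathbb{Q}}=[Z]_{\mathrm{BC},\mathbb{Q}}$. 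One route is the Grothendieck--Riemann--Roch formula for the closed immersion $\imath$ in the rational Bott--Chern setting (following \cite{Gri10, Wu23}), $\chr(\imath_{\ast}\CO_{Z})=\imath_{\ast}(\mathrm{td}(N_{Z/X})^{-1})$: since $\mathrm{td}(\,\cdot\,)^{-1}=1+(\text{higher degree})$, its bidegree $(r,r)$ part is $\imath_{\ast}(1)=[Z]_{\mathrm{BC},\mathbb{Q}}$, and the vanishing of $\hat{c}_{j}(\imath_{\ast}\CO_{Z})_{\mathbb{Q}}$ for $j<r$ converts this into the identity for $\hat{c}_{r}$. To stay self-contained one argues instead by induction on $r$: the case $r=1$ with $Z$ a smooth divisor $D$ follows from $0\rightarrow\CO_{X}(-D)\rightarrow\CO_{X}\rightarrow\imath_{\ast}\CO_{D}\rightarrow 0$, the rational Whitney formula, and the compatibility of $\hat{c}_{1}$ with divisor classes built into \cite{Sch07, Gri10, Wu23}, which give $\hat{c}_{1}(\imath_{\ast}\CO_{D})_{\mathbb{Q}}=\hat{c}_{1}(\CO_{X}(D))_{\mathbb{Q}}=[D]_{\mathrm{BC},\mathbb{Q}}$; the inductive step passes to the blow-up $\pi:\widetilde{X}=\mathrm{Bl}_{Z}X\rightarrow X$ with exceptional divisor $E=\mathbb{P}(N_{Z/X})$ and $\rho=\pi|_{E}$, and uses the rational functoriality of $\hat{c}$ (the analogue of Proposition \ref{p-b-k}) applied to $L\pi^{\ast}$ of the Koszul resolution, together with the $r=1$ case for $E\subset\widetilde{X}$ and the projective bundle formula, to write both $\pi^{\ast}[Z]_{\mathrm{BC},\mathbb{Q}}$ and $\pi^{\ast}\hat{c}_{r}(\imath_{\ast}\CO_{Z})_{\mathbb{Q}}$ as $\pi_{\ast}$ of the same explicit polynomial in $\hat{c}_{1}(\CO_{\widetilde{X}}(E))_{\mathbb{Q}}$ and $\rho^{\ast}\hat{c}(N_{Z/X})_{\mathbb{Q}}$; injectivity of $\pi^{\ast}$ then closes the induction.

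\emph{Main obstacle.} The delicate step is the cycle-class identification. Over $\mathbb{R}$ it is precisely the generalized Poincar\'{e}--Lelong formula applied to the top Chern form $c_{r}(\nabla^{Q})$ of a Hermitian connection, as used in the proof of Theorem \ref{RR-domi}; but the rational theory has no Chern--Weil representatives, so the identification must be drawn entirely from the structural properties of Grivaux's rational Chern character. In the self-contained approach this forces every step of the codimension reduction --- the rational Whitney formula, the projection formula, and the Poincar\'{e}--Lelong relation $\frac{\partial\overline{\partial}}{2\pi\sqrt{-1}}\log(|z|^{2})=[0]-[\infty]$ used to move cycles, exactly as in the proof of Lemma \ref{key-lem} --- to be carried out inside the integral-current complex computing $H_{BC}(\,\cdot\,,\mathbb{Q})$, i.e. over $\mathbb{Q}$ and not merely over $\mathbb{R}$; once this bookkeeping is in place the remaining ingredients are purely formal.
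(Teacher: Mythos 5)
Your proof is correct and follows essentially the same route as the paper: the Koszul resolution of $\CO_{Z(s)}$, the rational version of Lemma \ref{RR-domi-0} applied with $V=\CO_{X}$, and the Grothendieck--Riemann--Roch formula of \cite{Wu23} for the inclusion $Z(s)\hookrightarrow X$ to identify the lowest-degree term of the Chern character with the cycle class. You track the factor $(-1)^{r-1}(r-1)!$ relating $\hat{c}_{r}$ and $\mathrm{ch}_{r}$ more explicitly than the paper does (there it silently cancels between the two steps); the alternative induction on $r$ that you sketch is not needed.
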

\begin{proof}
Let $\mathcal{I}_{Z(s)}$ be the coherent ideal sheaf of $Z(s)$.
Then we have the canonical Koszul resolution $K(s)=(\wedge^{\bullet}F^{\vee})$ of the coherent sheaf $\CO_{X}/\mathcal{I}_{Z(s)}$, where $F^{\vee}$ is the dual of $F$.
From definition, we get
\begin{equation*}
\BC(\CO_{X}/\mathcal{I}_{Z(s)})_{\mathbb{Q}}=
\prod_{i=0}^{r}\bigl[\BC(\wedge^{i}F^{\vee})_{\mathbb{Q}}\bigr]^{(-1)^{i}}
\in H^{(=)}_{BC}(X, \mathbb{Q}).
\end{equation*}
Note that Lemma \ref{RR-domi-0} is also valid for rational Bott--Chern characteristic classes of holomorphic vector bundles.
Let $1$ be the trivial complex line bundle over $X$ and therefore there exists a unique power series $f(z_{1},\cdots, z_{r}; w)$ with integer coefficients such that
\begin{eqnarray*}
  \prod_{i=0}^{r}\bigl[\BC(\wedge^{i}F^{\vee})_{\mathbb{Q}}\bigr]^{(-1)^{i}}
   &=&
   \prod_{i=0}^{r}\bigl[\BC(\wedge^{i}F^{\vee}\otimes\CO_{X})_{\mathbb{Q}}\bigr]^{(-1)^{i}} \\
   &=& 1+\BC_{r}(F)_{\mathbb{Q}}\cdot f(F,1),
\end{eqnarray*}
where
$f(F,1)=f(\BC_{1}(F)_{\mathbb{Q}},\cdots,\BC_{r}(F)_{\mathbb{Q}};0)$.
As a result, by the degree reason, we have $\BC_{r}(F)_{\mathbb{Q}}=\BC_{r}(\CO_{X}/\mathcal{I}_{Z(s)})_{\mathbb{Q}}$.
Put $i_{s}:Z(s)\hookrightarrow X$ the natural inclusion.
Because of the isomorphism $\CO_{X}/\mathcal{I}_{Z(s)}\cong i_{s,\ast}\CO_{Z(s)}$, we obtain
$\BC_{r}(F)_{\mathbb{Q}}=\BC_{r}(i_{s,\ast}\CO_{Z(s)})_{\mathbb{Q}}$.
Due to the GRR formula in rational Bott--Chern cohomology (cf. \cite[Corollary 1]{Wu23}), the following identity holds:
$$
i_{s,\ast}(\mathrm{Td_{BC}}(Z(s))_{\mathbb{Q}})
=
\mathrm{ch_{BC}}(i_{s,\ast}\CO_{Z(s)})_{\mathbb{Q}}
\cdot\mathrm{Td_{BC}}(X)_{\mathbb{Q}}
$$
whose lowest degree gives
$i_{s,*}(1)=\BC_{r}(i_{s,\ast}\CO_{Z(s)})_{\mathbb{Q}}.$
Observe that $i_{s,*}(1)=[Z(s)]^{\mathbb{Q}}_{\mathrm{BC}}$ in $H^{r,r}_{BC}(X, \mathbb{Q})$.
It finishes the proof for the equality $\BC_{r}(F)_{\mathbb{Q}}=[Z(s)]^{\mathbb{Q}}_{\mathrm{BC}}$ in rational Bott--Chern cohomology.
\end{proof}

Next, we will establish the analogy of Lemma \ref{key-lem} for rational Bott--Chern characteristic classes.
\begin{lem}\label{r-key-lem}
With the same notations as in Subsection \ref{s-sec-4-1},
the equality in Lemma \ref{key-lem} holds in rational Bott--Chern cohomology.
\end{lem}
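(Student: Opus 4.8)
The plan is to replay the proof of Lemma~\ref{key-lem} verbatim in rational Bott--Chern cohomology. All of its geometric input lives in the deformation diagram \eqref{def-nor-diagram} and in the derived category, so it is insensitive to the choice of cohomology: the isomorphisms
$$
Lj^{\ast}_{0}\circ l_{\ast}(p^{\ast}_{X}\CF)\cong\imath_{\ast}\CF,\qquad
L\hat{\jmath}^{\ast}\circ l_{\ast}(p^{\ast}_{X}\CF)\cong\overline{\imath}_{\ast}\CF,\qquad
Lg^{\ast}\circ l_{\ast}(p^{\ast}_{X}\CF)\cong 0
$$
in $\DC(Y)$, $\DC(P)$ and $\DC(\widetilde{Y})$ are exactly as in that proof, resting on \cite[Proposition~9.1.1]{BSW23}, the transversality of $X\times\mathbb{P}^{1}$ with $W_{0}$ and with $P$, and $\widetilde{Y}\cap(X\times\mathbb{P}^{1})=\emptyset$. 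Applying the rational naturality identity $\BC(Lf^{\ast}\CF^{\bullet})_{\mathbb{Q}}=f^{\ast}\BC(\CF^{\bullet})_{\mathbb{Q}}$ established above to $f=j_{0},\hat{\jmath},g$ converts these into
$$
j^{\ast}_{0}\BC\bigl(l_{\ast}(p^{\ast}_{X}\CF)\bigr)_{\mathbb{Q}}=\BC(\imath_{\ast}\CF)_{\mathbb{Q}},\qquad
\hat{\jmath}^{\ast}\BC\bigl(l_{\ast}(p^{\ast}_{X}\CF)\bigr)_{\mathbb{Q}}=\BC(\overline{\imath}_{\ast}\CF)_{\mathbb{Q}},
$$
together with the vanishing of the class $g^{\ast}\BC(l_{\ast}(p^{\ast}_{X}\CF))_{\mathbb{Q}}$.

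By \cite[Remark~1]{Wu23} the rational Bott--Chern complex on $W$ is quasi-isomorphic to a soft complex of locally integral currents and distributions, so one may pick a $d^{W}$-closed real current $\alpha$ of $(=)$-type representing $\BC(l_{\ast}(p^{\ast}_{X}\CF))_{\mathbb{Q}}$; being $d^{W}$-closed of pure type, $\alpha$ is separately $\partial^{W}$- and $\overline{\partial}^{W}$-closed. One then runs the Poincar\'{e}--Lelong computation of the proof of Lemma~\ref{key-lem}: pulling back $\tfrac{\overline{\partial}^{\mathbb{P}^{1}}\partial^{\mathbb{P}^{1}}}{2\pi\sqrt{-1}}\log(|z|^{2})=[0]-[\infty]$ by $q_{2}$, multiplying by $\alpha$, and using the projection formula gives
$$
\tfrac{\overline{\partial}^{W}\partial^{W}}{2\pi\sqrt{-1}}\bigl[\alpha\cdot q^{\ast}_{2}\log(|z|^{2})\bigr]=j_{0,\ast}(j^{\ast}_{0}\alpha)-\hat{\jmath}_{\ast}(\hat{\jmath}^{\ast}\alpha)-g_{\ast}(g^{\ast}\alpha),
$$
and then applying the proper pushforward $q_{1,\ast}$ together with $q_{1}\circ j_{0}=\mathrm{id}_{Y}$ and $\mu=q_{1}\circ\hat{\jmath}$ yields
$$
\tfrac{\overline{\partial}^{Y}\partial^{Y}}{2\pi\sqrt{-1}}\bigl[q_{1,\ast}\bigl(\alpha\cdot q^{\ast}_{2}\log(|z|^{2})\bigr)\bigr]=j^{\ast}_{0}\alpha-\mu_{\ast}(\hat{\jmath}^{\ast}\alpha)-(q_{1}\circ g)_{\ast}(g^{\ast}\alpha).
$$
Since the class of $g^{\ast}\alpha$ vanishes, so does that of $(q_{1}\circ g)_{\ast}(g^{\ast}\alpha)$; passing to rational Bott--Chern cohomology classes and using the two pullback identities above gives $\BC(\imath_{\ast}\CF)_{\mathbb{Q}}=\mu_{\ast}\BC(\overline{\imath}_{\ast}\CF)_{\mathbb{Q}}$, as claimed.

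The only step that is not a purely formal transcription, and which I expect to be the main obstacle, is that in Lemma~\ref{key-lem} the representative $\alpha$ is a \emph{smooth} form, whereas in the rational setting $\alpha$ is only a current: a priori one can neither restrict it to $W_{0},P,\widetilde{Y}$ nor multiply it by the unbounded function $q^{\ast}_{2}\log(|z|^{2})$, so the two displayed equations need justification. I would dispose of this by choosing $\alpha$ to be smooth on $W\setminus(X\times\mathbb{P}^{1})$ --- possible because $l_{\ast}(p^{\ast}_{X}\CF)$ is locally free (in fact zero) there, so its class admits a smooth Chern-form representative off $X\times\mathbb{P}^{1}$. Since $X\times\mathbb{P}^{1}$ meets $W_{0}$ transversally along $X$, meets $P$ transversally along $X\times\{\infty\}$, is disjoint from $\widetilde{Y}$, and is contained in no fibre of $q_{2}$, the pullbacks $j^{\ast}_{0}\alpha,\hat{\jmath}^{\ast}\alpha,g^{\ast}\alpha$ and the product $\alpha\cdot q^{\ast}_{2}\log(|z|^{2})$ are then legitimate currents for which the projection formula and the Poincar\'{e}--Lelong identity hold. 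This regularity input is exactly what Lemma~\ref{r-p-l} packages, so in practice one invokes Lemma~\ref{r-p-l} wherever the real proof used smoothness of $\alpha$; everything else is mechanical.
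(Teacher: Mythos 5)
There is a genuine gap, and it is precisely at the step you flag as ``the main obstacle'' --- but the obstacle is not the one you diagnose, and your proposed fix does not address the real one. The issue with transcribing the Poincar\'e--Lelong computation verbatim is not the regularity of the representative $\alpha$; it is that the primitive $q_{2}^{\ast}\log(|z|^{2})$ is not a cochain in the rational Bott--Chern complex. The identity $\tfrac{\overline{\partial}\partial}{2\pi\sqrt{-1}}\log(|z|^{2})=[0]-[\infty]$ exhibits $[0]-[\infty]$ as $\partial\overline{\partial}$-exact in the complex of \emph{real} currents, so your argument, even granting all the restriction and multiplication steps, only shows that $j_{0}^{\ast}\alpha-\mu_{\ast}(\hat{\jmath}^{\ast}\alpha)$ dies after mapping to $H^{(=)}_{BC}(Y,\mathbb{R})$ --- which is just Lemma \ref{key-lem} again. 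Since the comparison map $H^{(=)}_{BC}(Y,\mathbb{Q})\to H^{(=)}_{BC}(Y,\mathbb{R})$ need not be injective, this does not yield the rational statement. Your appeal to Lemma \ref{r-p-l} as the package for ``regularity input'' is also a misattribution: that lemma identifies the top rational class of a bundle with a zero-locus cycle class and says nothing about current representatives.

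The paper's actual proof avoids ever manipulating a current representative of $\alpha$. It first proves the cycle-class identity $[0]^{\mathbb{Q}}_{\mathrm{BC}}=[\infty]^{\mathbb{Q}}_{\mathrm{BC}}$ \emph{inside} the rational theory by replacing the transcendental primitive $\log(|z|^{2})$ with data that genuinely lives in the soft complex of locally integral currents and distributions computing $H^{1,1}_{BC}(\mathbb{P}^{1},\mathbb{Q})$: the locally integral current $[0,\infty]$ together with the branches $\tfrac{1}{2\pi\sqrt{-1}}\ln(\bar z)\oplus-\tfrac{1}{2\pi\sqrt{-1}}\ln(z)$, whose differential is $([0]-[\infty],0)$. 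Combining this with \cite[Proposition 6.5.1]{Wu20} gives $[Y]^{\mathbb{Q}}_{\mathrm{BC}}-[P]^{\mathbb{Q}}_{\mathrm{BC}}-[\widetilde{Y}]^{\mathbb{Q}}_{\mathrm{BC}}=0$ as an identity of rational Bott--Chern classes on $W$. One then multiplies this identity by the cohomology class $\alpha=\BC(l_{\ast}(p_{X}^{\ast}\CF))_{\mathbb{Q}}$ and applies the projection formula and $q_{1,\ast}$ entirely at the level of classes, so no restriction of currents or multiplication by unbounded functions is ever needed. Your derived-category reductions and the use of rational naturality for $j_{0},\hat{\jmath},g$ are fine and match the paper; the missing idea is the rational refinement of the Poincar\'e--Lelong step.
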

\begin{proof}
We denote by $z$ the parameter in $\mathbb{P}^{1} = \mathbb{C} \cup \{\infty\}$ and by $[0, \infty]$ a (real) line connecting 0 and $\infty$ in $\mathbb{P}^{1}$ (for example we can take the positive real axis).
Then the function $\ln z$ is well defined on $\mathbb{P}^{1} \setminus [0, \infty]$.
As $[0, \infty]$ is a real codimension one real analytic set of $\mathbb{P}^{1}$, so it well defines a locally integral current.
As a current, we have
$$
d([0, \infty]) = -[0] + [\infty].
$$
We have the decomposition into $(1, 0)$ and $(0, 1)$ parts as
$$
[0, \infty] = -\frac{1}{2 \pi\sqrt{-1}}\overline{\partial}^{\mathbb{P}^{1}}\ln(z) -\frac{1}{2 \pi \sqrt{-1}}\partial^{\mathbb{P}^{1}} \ln(\bar{z}).
$$
(cf. the proof of \cite[Proposition 6.4.3]{Wu20}).
Since the rational Bott--Chern complex is quasi-isomorphic to a soft complex formed by locally integral currents and distributions, we can represent the rational Bott--Chern
cohomology classes by global sections of the soft complex.
In particular, in the spaces of global sections of the soft complex associated to rational
Bott--Chern complex of bidegree $ (1, 1)$, we have
$$
([0] - [\infty], 0) = \delta ([0, \infty], \frac{1}{2 \pi \sqrt{-1}} \ln(\bar{z}) \oplus  -\frac{1}{2 \pi \sqrt{-1}} \ln(z)),
$$
where $\delta$ is the differential.
In the complex Bott--Chern cohomology, this corresponds to the classical Poincar\'{e}--Lelong formula under identification.
As a consequence, the cycle class of $0$ is equal to the cycle class of $\infty$.
Apply \cite[Proposition 6.5.1]{Wu20} to the constants, with the same notations as in Lemma \ref{key-lem}, we get an equality of cycle classes in rational Bott--Chern cohomology
$$
[Y ]^{\mathbb{Q}}_{\mathrm{BC}} - [P ]^{\mathbb{Q}}_{\mathrm{BC}}
 - [ \widetilde{Y} ]^{\mathbb{Q}}_{\mathrm{BC}} = 0.
 $$
Set $\alpha=\BC(l_{\ast}(p_{X}^{\ast}\CF))\in H^{(=)}_{BC}(W, \mathbb{Q})$.
By the projection formula for rational Bott--Chern cohomology, we have
\begin{equation}\label{r-b-5-1}
 \alpha\cdot[Y ]^{\mathbb{Q}}_{\mathrm{BC}} - \alpha\cdot[P ]^{\mathbb{Q}}_{\mathrm{BC}}
 - \alpha\cdot[ \widetilde{Y} ]^{\mathbb{Q}}_{\mathrm{BC}}
 = j_{0,\ast}(j^{\ast}_{0}\alpha)-
 \hat{\jmath}_{\ast}(\hat{\jmath}^{\ast}\alpha)-g_{\ast}(g^{\ast}\alpha)
 =  0
 \end{equation}
Taking $q_{1,*}$ to \eqref{r-b-5-1} deduces the equality
$
j^{\ast}_{0}\alpha=\mu_{\ast}(\hat{\jmath}^{\ast}\alpha)
$
in $H^{(=)}_{BC}(Y, \mathbb{Q})$ which means
$\BC(\imath_{\ast}\CF)_{\mathbb{Q}}
=\mu_{\ast}\BC(\overline{\imath}_{\ast}\CF)_{\mathbb{Q}}$.
\end{proof}

Based on Lemma \ref{r-p-l} and Lemma \ref{r-key-lem}, we can show that the Riemann--Roch without denominators for rational Bott--Chern characteristic classes is valid by following the steps in the proof of Theorem \ref{RR-domi}.
Consequently, without essential changes, the proof of Theorem \ref{thm1} still works in the rational case and we are led to the following result.
\begin{thm}\label{thm2}
Under the same assumptions as Theorem \ref{thm1}, the blow-up formula holds with the substitution of the Bott--Chern characteristic classes for the rational Bott--Chern characteristic classes throughout.
\end{thm}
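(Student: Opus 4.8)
The plan is to retrace the proof of Theorem \ref{thm1} in Section \ref{sec4} line by line, systematically replacing every occurrence of $\hat{c}(-)$ by its rational counterpart $\hat{c}(-)_{\mathbb{Q}}$, and to check at each step that the tool being invoked has already been given a rational incarnation. The structural ingredients of that proof are: the Whitney formula, the naturality (pullback) formula, Riemann--Roch without denominators (Theorem \ref{RR-domi}), the projective bundle formula together with the resulting splitting principle, and the exact sequences of Lemma \ref{4-ext-seq}. The Whitney and naturality formulas have already been established rationally earlier in Section \ref{sec5}; Lemma \ref{4-ext-seq} is a statement about exact sequences of sheaves and vector bundles and is therefore insensitive to the choice of coefficients; and the splitting principle for rational Bott--Chern characteristic classes is already used in the proof of Lemma \ref{r-p-l}. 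So the only genuine task is to produce a rational version of Theorem \ref{RR-domi}.

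First I would record the rational form of Lemma \ref{RR-domi-0}: since it is a purely formal consequence of the splitting principle and the Whitney formula, the argument of \cite[Lemma 15.3]{Ful98} goes through verbatim and produces the \emph{same} power series $f(z_{1},\dots,z_{u};w_{1},\dots,w_{v})$ with integer coefficients, because both the real and the rational $f$ are characterized by the identical formal identity in Chern roots inside a polynomial ring, so uniqueness forces them to coincide. Next, for the rational Riemann--Roch without denominators, I would work on $P$ with the Koszul resolution \eqref{k-r} of $\overline{\imath}_{\ast}\CF$ and apply the rational Whitney formula to get $\hat{c}(\overline{\imath}_{\ast}\CF)_{\mathbb{Q}}-1=\hat{c}_{r}(Q)_{\mathbb{Q}}\cdot f(Q,\varrho^{\ast}F)$. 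The canonical section $\sigma:P\to Q$ is holomorphic with smooth zero locus $\sigma^{-1}(0)=X\times\{\infty\}$, so Lemma \ref{r-p-l} applies and identifies $\hat{c}_{r}(Q)_{\mathbb{Q}}$ with the rational cycle class $[X\times\{\infty\}]^{\mathbb{Q}}_{\mathrm{BC}}=\overline{\imath}_{\ast}1$; the rational projection formula then yields $\hat{c}_{r}(Q)_{\mathbb{Q}}\cdot f(Q,\varrho^{\ast}F)=\overline{\imath}_{\ast}f(N,F)$, using $\overline{\imath}^{\ast}Q\cong N$ and $\overline{\imath}^{\ast}\varrho^{\ast}F=F$. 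This gives $\hat{c}(\overline{\imath}_{\ast}\CF)_{\mathbb{Q}}=1+\overline{\imath}_{\ast}f(N,F)$, and then Lemma \ref{r-key-lem}, which supplies $\hat{c}(\imath_{\ast}\CF)_{\mathbb{Q}}=\mu_{\ast}\hat{c}(\overline{\imath}_{\ast}\CF)_{\mathbb{Q}}$, together with $\mu\circ\overline{\imath}=\imath$, delivers $\hat{c}(\imath_{\ast}\CF)_{\mathbb{Q}}=1+\imath_{\ast}f(N,F)$, the rational analogue of Theorem \ref{RR-domi}.

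With this in hand, the proof of Theorem \ref{thm1} transfers without essential change: apply the rational Whitney formula to the three exact sequences of Lemma \ref{4-ext-seq} and to \eqref{tau-ex} and \eqref{tau-ex-1}, invoke the rational Riemann--Roch without denominators for $\jmath_{\ast}\mathcal{Q}_{E}$ (using $N_{E/\widetilde{Y}}\cong\CO_{E}(-1)$), and use rational naturality to pull back along $\jmath$ and $\rho$. Because the power series $f$ is literally the same as in the real case, the identical bookkeeping produces the identical closed expression for $\alpha$, now inside $H^{(=)}_{BC}(\widetilde{Y},\mathbb{Q})$, which is the assertion of Theorem \ref{thm2}. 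I do not expect a genuine obstacle in this last step: all the analytic content — the Poincar\'{e}--Lelong-type identification of top Bott--Chern classes with cycle classes, and the deformation-to-the-normal-cone identity of cycle classes — has already been isolated into Lemmas \ref{r-p-l} and \ref{r-key-lem}, so the hardest part of the work is already done. The only point that genuinely requires care is confirming that proper pushforward, the projection formula, and the splitting principle are all available for rational Bott--Chern cohomology, which they are by \cite{Sch07, Wu20, Wu23}.
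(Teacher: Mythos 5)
Your proposal is correct and follows essentially the same route as the paper: Section \ref{sec5} establishes the rational naturality and Whitney formulas and isolates the two genuinely new ingredients into Lemma \ref{r-p-l} and Lemma \ref{r-key-lem}, and then asserts that the proofs of Theorem \ref{RR-domi} and Theorem \ref{thm1} transfer verbatim. You have simply written out in detail the transfer that the paper leaves as "without essential changes," including the correct observation that the power series $f$ of Lemma \ref{RR-domi-0} is the same integral power series in both settings.
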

\begin{rem}
Observe that there exists a canonical morphism:
$$
\epsilon: H^{(=)}_{BC}(X, \mathbb{Q})\longrightarrow H^{(=)}_{BC}(X, \mathbb{R}).
$$
Comparing to Definition \ref{t-real-b-c}, it follows from \cite[Corollary 9.4.3]{BSW23}  that $\epsilon$ maps $\BC(\CF^{\bullet})_{\mathbb{Q}}$ to the (total) real Bott--Chern characteristic class $\BC(\CF^{\bullet})$.
As a result, the blow-up formulae in Theorems \ref{thm1} and \ref{thm2} are compatible under the natural morphism $\epsilon$.
\end{rem}
\section{Examples}\label{sec6}

It is noteworthy that for a holomorphic vector bundle over a compact K\"{a}hler manifold its Chern classes and Bott--Chern characteristic classes agree (cf. \cite[Theorem 1]{Big69}).
For the blow-up of an algebraic threefold, the Chern classes can be determined by some ad hoc methods (cf. \cite[Page 609]{GH94}).
In this section, we discuss the Bott--Chern characteristic classes of the blow-up of a general surface or a general threefold.
\subsection{Blowing up surfaces and threefolds}
Assume that $X$ is a point, then the form $\alpha$ in Theorem \ref{thm1} can be expressed as
$$
\alpha=1+\sum^{n}_{i=1}\binom{n}{i}(\zeta^{i}-\zeta^{i-1}).
$$
So that
$$
\BC(\widetilde{Y})-\pi^{\ast}\BC(Y)=\jmath_{\ast}\biggl[1+\sum^{n}_{i=1}\binom{n}{i}(\zeta^{i}-\zeta^{i-1})\biggr].
$$
Consider the first and the second Bott--Chern characteristic classes of a general blow-up, from Theorem \ref{thm1}, we get
\begin{equation}\label{bc1}
\BC_{1}(\widetilde{Y})=\pi^{\ast}\BC_{1}(Y)+(1-r)\cdot[E]_{\mathrm{BC}}
\end{equation}
 and
\begin{equation}\label{c2}
\BC_{2}(\widetilde{Y})=\pi^{\ast}\BC_{2}(Y)+
\jmath_{\ast}\biggl[\frac{r(3-r)}{2}\zeta+(2-r)\rho^{\ast}\BC_{1}(N)+
(1-r)\rho^{\ast}\BC_{1}(X)\biggr],
\end{equation}
where $[E]_{\mathrm{BC}}$ is the Bott--Chern cohomology class represented by the current $[E]$.
Using Theorem \ref{thm1}, we can compute the higher Bott--Chern characteristic classes of a general blow-up; however, the computation is complicated.

\begin{rem}
Consider the canonical line bundle of $\widetilde{Y}$.
The identity
\begin{equation}\label{kk}
K_{\widetilde{Y}}=\CO_{\widetilde{Y}}((r-1)E)\otimes\pi^{\ast}K_{Y}
\end{equation}
is valid (cf. \cite[Proposition 12.7]{Dem12}).
Comparing to the first Chern class of a blow-up \cite[Page 608]{GH94}, the formula \eqref{bc1} also can be deduced from  \eqref{kk}.
\end{rem}

Recall that $\zeta=\BC_{1}(\CO_{E}(1))$ and $\CO_{E}(1)=\jmath^{\ast}\CO_{\widetilde{Y}}(E)$, where $\CO_{\widetilde{Y}}(E)$ is the associated holomorphic line bundle of the exceptional divisor $E$.
Note that $\BC_{1}(\CO_{\widetilde{Y}}(E))=[E]_{\mathrm{BC}}$, see Appendix \ref{app}.
By the projection formula, we have
\begin{eqnarray*}
\jmath_{\ast}\zeta&=&\jmath_{\ast}(\jmath^{\ast}\BC_{1}(\CO_{\widetilde{Y}}(E)))=\jmath_{\ast}(\jmath^{\ast}\BC_{1}(\CO_{\widetilde{Y}}(E))\cdot 1)\\
&=&\BC_{1}(\CO_{\widetilde{Y}}(E))\cdot \jmath_{\ast}1=[E]_{\mathrm{BC}}\cdot[E]_{\mathrm{BC}}.
\end{eqnarray*}
Suppose $Y$ is a compact complex surface, then $X$ is a point and we have
\begin{equation*}
\BC_{2}(\widetilde{Y})=\pi^{\ast}\BC_{2}(Y)+\jmath_{\ast}\zeta=\pi^{\ast}\BC_{2}(Y)+([E]_{\mathrm{BC}})^{2}.
\end{equation*}
If $Y$ is a compact complex threefold, then we can blow up $Y$ at a point or along a smooth curve.
\begin{prop}
Let $\widetilde{Y}$ be the blow-up of the threefold $Y$ at a point $X=\{pt\}$, then we have:
\begin{itemize}
\item[(i)] $\BC_{2}(\widetilde{Y})=\pi^{\ast}\BC_{2}(Y)$;
\item[(ii)]  $\BC_{3}(\widetilde{Y})=\pi^{\ast}\BC_{3}(Y)+2([E]_{\mathrm{BC}})^{3}$.
\end{itemize}
\end{prop}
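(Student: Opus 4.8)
The plan is to specialize Theorem~\ref{thm1} to the case $n=3$ and $X=\{pt\}$, so that the codimension is $r=3$, the exceptional divisor is $E\cong\mathbb{P}(N)\cong\mathbb{P}^{2}$, the map $\rho\colon E\to X$ is constant, and $\zeta=\BC_{1}(\CO_{E}(1))$. Since $X$ is a point we have $\rho^{\ast}\BC(X)=1$ and $\BC_{i}(N)=0$ for $i\ge 1$, so the class $\alpha$ of Theorem~\ref{thm1} collapses to the expression recorded at the beginning of Section~\ref{sec6}, namely $\alpha=1+\sum_{i=1}^{3}\binom{3}{i}(\zeta^{i}-\zeta^{i-1})$. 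Expanding this gives $\alpha=-2+2\zeta^{2}+\zeta^{3}$, and Theorem~\ref{thm1} then reads $\BC(\widetilde{Y})-\pi^{\ast}\BC(Y)=\jmath_{\ast}(\alpha)$.

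Next I would simplify $\jmath_{\ast}(\alpha)$. Because $E\cong\mathbb{P}^{2}$ has complex dimension $2$, we have $\zeta^{3}=0$ in $H^{3,3}_{BC}(E,\mathbb{R})$, hence $\jmath_{\ast}(\zeta^{3})=0$ and $\jmath_{\ast}(\alpha)=-2\,\jmath_{\ast}(1)+2\,\jmath_{\ast}(\zeta^{2})$. Using $\CO_{E}(1)=\jmath^{\ast}\CO_{\widetilde{Y}}(E)$ together with $\BC_{1}(\CO_{\widetilde{Y}}(E))=[E]_{\mathrm{BC}}$ (Appendix~\ref{app}) and the projection formula, one gets $\jmath_{\ast}(1)=[E]_{\mathrm{BC}}$ and $\jmath_{\ast}(\zeta^{2})=\jmath_{\ast}\bigl(\jmath^{\ast}([E]_{\mathrm{BC}}^{2})\bigr)=[E]_{\mathrm{BC}}^{2}\cdot\jmath_{\ast}(1)=([E]_{\mathrm{BC}})^{3}$, exactly as in the computation of $\jmath_{\ast}\zeta$ performed just before the statement. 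Therefore $\BC(\widetilde{Y})-\pi^{\ast}\BC(Y)=-2[E]_{\mathrm{BC}}+2([E]_{\mathrm{BC}})^{3}$.

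Finally I would read off the homogeneous components: the right-hand side lies in $H^{1,1}_{BC}(\widetilde{Y},\mathbb{R})\oplus H^{3,3}_{BC}(\widetilde{Y},\mathbb{R})$. Its $(1,1)$-part $-2[E]_{\mathrm{BC}}$ recovers \eqref{bc1} with $r=3$; its $(2,2)$-part is zero, which gives (i) $\BC_{2}(\widetilde{Y})=\pi^{\ast}\BC_{2}(Y)$; and its $(3,3)$-part $2([E]_{\mathrm{BC}})^{3}$ gives (ii) $\BC_{3}(\widetilde{Y})=\pi^{\ast}\BC_{3}(Y)+2([E]_{\mathrm{BC}})^{3}$. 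There is no real obstacle here: once Theorem~\ref{thm1} is available the proof is pure bookkeeping, and the only steps that need a moment's care are the degree shift caused by $\jmath_{\ast}$ and the vanishing $\zeta^{3}=0$ on $\mathbb{P}^{2}$, both of which are immediate.
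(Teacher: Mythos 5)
Your proof is correct and follows essentially the same route as the paper: both specialize Theorem \ref{thm1} to a point center (where $\alpha=1+\sum_{i=1}^{3}\binom{3}{i}(\zeta^{i}-\zeta^{i-1})=-2+2\zeta^{2}+\zeta^{3}$), note the vanishing of the $\zeta$-coefficient for (i), and use $\jmath_{\ast}\zeta^{2}=([E]_{\mathrm{BC}})^{3}$ for (ii). The only cosmetic difference is that the paper quotes its previously derived formula \eqref{c2} for part (i) rather than reading off the $(2,2)$-component of $\jmath_{\ast}\alpha$ directly, but the content is identical.
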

\begin{proof}
Since $r=3$ the assertion (i) is a direct consequence of \eqref{c2}.
From Theorem \ref{thm1}, we get
$\BC_{3}(\widetilde{Y})=\pi^{\ast}\BC_{3}(Y)+2\jmath_{\ast}\zeta^{2}$.
Observe that $\jmath_{\ast}\zeta^{2}=([E]_{\mathrm{BC}})^{3}$ and this leads to the conclusion.
\end{proof}

Now we assume that $X$ is a smooth complex curve in the threefold $Y$.
\begin{prop}\label{3-curve}
Let $\widetilde{Y}$ be the blow-up of $Y$ with the center $X$, then we have
\begin{itemize}
\item[(i)] $\BC_{2}(\widetilde{Y})=\pi^{\ast}\BC_{2}(Y)-\pi^{\ast}\BC_{1}(Y)\cdot [E]_{\mathrm{BC}}  + \pi^{\ast}[X]_{\mathrm{BC}}$;
\item[(ii)] $\BC_{3}(\widetilde{Y})=\pi^{\ast}\BC_{3}(Y)+\jmath_{\ast}(\rho^{\ast}\BC_{1}(X)\cdot\zeta)$.
\end{itemize}
\end{prop}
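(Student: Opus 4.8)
The plan is to specialize Theorem \ref{thm1} to $r=2$ and to use that $X$ is a curve. Since $\dim_{\mathbb{C}}X=1$, one has $\BC_{k}(X)=0$ and $\BC_{k}(N)=0$ for all $k\geq 2$, and since $E=\mathbb{P}(N)$ is a $\mathbb{P}^{1}$-bundle over $X$ one has $\dim_{\mathbb{C}}E=2$, so $H^{k,k}_{BC}(E,\mathbb{R})=0$ for $k\geq 3$. Write $a:=\rho^{\ast}\BC_{1}(N)$ and $b:=\rho^{\ast}\BC_{1}(X)$. The Whitney formula applied to the tautological sequence \eqref{tau-ex} gives $\rho^{\ast}\BC(N)=(1-\zeta)\,\BC(Q_{E})$ (this is \eqref{430}); since $Q_{E}$ is a line bundle, comparing degree-one and degree-two parts and using $\BC_{2}(N)=0$ yields
$$
\BC_{1}(Q_{E})=a+\zeta,\qquad \zeta^{2}=-a\zeta \quad\text{in }H^{(=)}_{BC}(E,\mathbb{R}).
$$

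First I would compute the class $\alpha$ of Theorem \ref{thm1} in this situation. A formal expansion of
$$
\alpha=\frac{1}{\zeta}\Bigl[\BC(\rho^{\ast}N)-(1-\zeta)\,\BC(\rho^{\ast}N\otimes\CO_{E}(1))\Bigr],
$$
using the splitting principle together with $\BC_{2}(N)=0$, produces the polynomial $\alpha=-1+\zeta+\zeta^{2}+a\zeta$; imposing the projective bundle relation $\zeta^{2}=-a\zeta$ collapses this to $\alpha=\zeta-1$ in $H^{(=)}_{BC}(E,\mathbb{R})$. Hence $\rho^{\ast}\BC(X)\cdot\alpha=(1+b)(\zeta-1)$ has homogeneous components $-1$, $\zeta-b$ and $b\zeta$ in bidegrees $(0,0)$, $(1,1)$ and $(2,2)$, and nothing in higher bidegree. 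Since $\jmath_{\ast}$ raises bidegree by $(1,1)$, extracting the bidegree-$(k,k)$ piece of the identity of Theorem \ref{thm1} yields
$$
\BC_{2}(\widetilde{Y})=\pi^{\ast}\BC_{2}(Y)+\jmath_{\ast}(\zeta-b),\qquad
\BC_{3}(\widetilde{Y})=\pi^{\ast}\BC_{3}(Y)+\jmath_{\ast}(b\zeta).
$$
The second equality is exactly assertion (ii) (and the first is also just \eqref{c2} with $r=2$).

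It remains to recast $\jmath_{\ast}(\zeta-b)$ into the form stated in (i). I would combine three facts. First, the projection formula with $\pi\circ\jmath=\imath\circ\rho$ and $\jmath_{\ast}1=[E]_{\mathrm{BC}}$ gives $\pi^{\ast}\BC_{1}(Y)\cdot[E]_{\mathrm{BC}}=\jmath_{\ast}\jmath^{\ast}\pi^{\ast}\BC_{1}(Y)=\jmath_{\ast}\rho^{\ast}\imath^{\ast}\BC_{1}(Y)$. Second, the Whitney formula applied to the normal bundle sequence $0\to TX\to\imath^{\ast}TY\to N\to 0$ gives $\imath^{\ast}\BC_{1}(Y)=\BC_{1}(X)+\BC_{1}(N)$, so $\pi^{\ast}\BC_{1}(Y)\cdot[E]_{\mathrm{BC}}=\jmath_{\ast}(b+a)$. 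Third, the blow-up key formula recorded in Appendix \ref{app} identifies $\pi^{\ast}[X]_{\mathrm{BC}}=\pi^{\ast}\imath_{\ast}1$ with $\jmath_{\ast}\BC_{r-1}(Q_{E})=\jmath_{\ast}\BC_{1}(Q_{E})=\jmath_{\ast}(a+\zeta)$. Subtracting the second from the third,
$$
-\pi^{\ast}\BC_{1}(Y)\cdot[E]_{\mathrm{BC}}+\pi^{\ast}[X]_{\mathrm{BC}}=\jmath_{\ast}\bigl((a+\zeta)-(a+b)\bigr)=\jmath_{\ast}(\zeta-b),
$$
which is precisely the correction term above, and (i) follows.

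All of these computations are elementary. The one place that genuinely requires care is the passage from the \emph{polynomial} expression for $\alpha$ to its class in $H^{(=)}_{BC}(E,\mathbb{R})$: one must carry out the division by $\zeta$ formally and only afterwards invoke the projective bundle relation $\zeta^{2}=-a\zeta$ (rather than treating $\zeta$ and $a$ as independent), and one must keep track of which graded piece each term lands in after $\jmath_{\ast}$ so that it contributes to the correct $\BC_{k}(\widetilde{Y})$. A secondary point is having the key formula $\pi^{\ast}\imath_{\ast}1=\jmath_{\ast}\BC_{r-1}(Q_{E})$ available in Bott--Chern cohomology; if it is not already stated in Appendix \ref{app} in this form, it follows from the Riemann--Roch without denominators (Theorem \ref{RR-domi}) together with naturality (Proposition \ref{p-b-k}), or from the excess intersection identity for the blow-up square. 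I expect no essential obstacle beyond this bookkeeping.
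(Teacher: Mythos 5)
Your proposal is correct and follows essentially the same route as the paper: both extract the relevant graded pieces of Theorem \ref{thm1}, use the Whitney formula on the tautological sequence \eqref{tau-ex} and the normal bundle sequence, and invoke the projection formula together with the formule-clef (Proposition \ref{for-cl}) to convert $\jmath_{\ast}(\zeta-\rho^{\ast}\BC_{1}(X))$ into $-\pi^{\ast}\BC_{1}(Y)\cdot[E]_{\mathrm{BC}}+\pi^{\ast}[X]_{\mathrm{BC}}$. The only cosmetic difference is that you simplify $\alpha$ to $\zeta-1$ up front via $\zeta^{2}=-\rho^{\ast}\BC_{1}(N)\cdot\zeta$, whereas the paper keeps the full degree-two term and verifies the same cancellation at the end.
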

\begin{proof}
From Theorem \ref{thm1}, we get
\begin{equation*}
\BC_{2}(\widetilde{Y})-\pi^{\ast}\BC_{2}(Y) = \jmath_{\ast}[-\rho^{\ast}\BC_{1}(X)+\zeta] .
\end{equation*}
Since $\BC_{1}(X)=\imath^{\ast}\BC_{1}(Y)-\BC_{1}(N)$,
using \eqref{tau-ex}, Proposition \ref{for-cl} and the projection formula, we obtain
\begin{eqnarray*}
\BC_{2}(\widetilde{Y})-\pi^{\ast}\BC_{2}(Y)
& = & \jmath_{\ast}[-\rho^{\ast}\BC_{1}(X)+\zeta] \\
& = & \jmath_{\ast}[-\rho^{\ast}\imath^{\ast}\BC_{1}(Y)]+\jmath_{\ast}[\rho^{\ast}\BC_{1}(N)+\zeta] \\
& = & \jmath_{\ast}[-\jmath^{\ast}\pi^{\ast}\BC_{1}(Y)]+\jmath_{\ast}[\BC_{1}(Q_{E})] \\
& = & -\pi^{\ast}\BC_{1}(Y)\cdot [E]_{\mathrm{BC}} + \pi^{\ast}( \imath_{\ast} 1) \\
& = & -\pi^{\ast}\BC_{1}(Y)\cdot [E]_{\mathrm{BC}}  + \pi^{\ast}[X]_{\mathrm{BC}} .
\end{eqnarray*}

By Theorem \ref{thm1} again, the top Bott--Chern class of $\widetilde{Y}$ can be expressed as
$$
\BC_{3}(\widetilde{Y})=\pi^{\ast}\BC_{3}(Y)+\jmath_{\ast}[\zeta^{2}+\rho^{\ast}\BC_{1}(N)\cdot\zeta+\rho^{\ast}\BC_{1}(X)\cdot\zeta+\rho^{\ast}\BC_{2}(N)].
$$
Due to \eqref{tau-ex}, we have $\rho^{\ast}\BC(N)=(1-\zeta)\cdot\BC(Q_{E})$ and thus $\rho^{\ast}\BC_{2}(N)=-\zeta\cdot\BC_{1}(Q_{E})$ since the rank of $Q_{E}$ is 1.
It follows $\zeta^{2}+\rho^{\ast}\BC_{1}(N)\cdot\zeta+\rho^{\ast}\BC_{2}(N)=0$ and this completes the proof of assertion (ii).
\end{proof}

\subsection{Iwasawa manifolds}
Recall the construction of Iwasawa manifolds.
Denote by $\mathbb{H}(3; \mathbb{C})$ the Heisenberg Lie group
$$
\mathbb{H}(3; \mathbb{C}):=
\Bigg\{ \small{\left(\begin{array}{ccc}
1 & z_{1} & z_{3}\\
0 & 1  & z_{2} \\
0 & 0 & 1
\end{array}
\right)}\mid z_{1}, z_{2}, z_{3} \in \mathbb{C} \Bigg\}\subset\mathrm{GL}(3, \mathbb{C}).
$$
Consider the discrete group
$\mathbb{G}_{3}:=\mathrm{GL}(3; \mathbb{Z}[\sqrt{-1}])\cap \mathbb{H}(3; \mathbb{C})$,
where $\mathbb{Z}[\sqrt{-1}]=\{a+b\sqrt{-1}\mid a,b\in \mathbb{Z}\}$ is the Gaussian integers.
The left multiplication gives rise to a natural $\mathbb{G}_{3}$-action on $\mathbb{H}(3; \mathbb{C})$ which is properly discontinuous.
The corresponding quotient space
$
\mathbb{I}_{3}:=\mathbb{H}(3; \mathbb{C}) / \mathbb{G}_{3}
$
is a $6$-dimensional compact smooth real manifold.
Moreover, there is a natural complex structure $J_{0}$ on $\mathbb{I}_{3}$ induced from the standard complex structure on $\mathbb{H}(3; \mathbb{C})\cong\mathbb{C}^{3}$.
The compact complex threefold $(\mathbb{I}_{3}, J_{0})$ is called the {\it Iwasawa manifold}, which is non-K\"{a}hler, no-formal, and holomorphically parallelizable.
By definition, the projection
$${
\left(\begin{array}{ccc}
1 & z_{1} & z_{3}\\
0 & 1  & z_{2} \\
0 & 0 & 1
\end{array}
\right)}
\longmapsto (z_{1}, z_{2})$$
determines a natural holomorphic submersion
$$
f: \mathbb{I}_{3} \longrightarrow \mathbb{T}_{\mathbb{C}}^{2}=\mathbb{C}/ \mathbb{Z}[\sqrt{-1}] \times\mathbb{C}/ \mathbb{Z}[\sqrt{-1}].
$$
This implies that $\mathbb{I}_{3}$ is a holomorphic fibre bundle over $\mathbb{T}^{2}_{\mathbb{C}}$ and all fibres of $f$ are isomorphic to the complex torus $\mathbb{T}_{\mathbb{C}}=\mathbb{C}/ \mathbb{Z}[\sqrt{-1}] $.

Consider the fibre $C:=f^{-1}([0, 0])=f^{-1}([c_{1}, c_{2}])$, where $c_{1}, c_{2}\in \mathbb{Z}[\sqrt{-1}]$.
Let $\pi:\tilde{\mathbb{I}}_{3}\rightarrow\mathbb{I}_{3}$ be the blow-up of $\mathbb{I}_{3}$ along the curve $C$ (cf. \cite[Example 1]{CY19}).
Next, we compute the Bott--Chern characteristic classes of $\tilde{\mathbb{I}}_{3}$.
Note that the space of $\mathbb{H}(3; \mathbb{C})$-invariant $(1,0)$-forms on $\mathbb{H}(3; \mathbb{C})$ has a basis $\omega^{1}=dz_{1}$, $\omega^{2}=dz_{2}$, $\omega^{3}=dz_{3}-z_{1}dz_{2}$ satisfying the structure equations
$$
\begin{cases}
d^{\mathbb{I}_{3}}\omega^{1}=0, \\
d^{\mathbb{I}_{3}}\omega^{2}=0,\\
d^{\mathbb{I}_{3}}\omega^{3}=-\omega^{1}\wedge\omega^{2}.
\end{cases}
$$
Put $\omega^{s\overline{t}}=\omega^{s}\wedge\overline{\omega^{t}}$ for all $s,t>0$.
Since the holomorphic tangent bundle of $\mathbb{I}_{3}$ is holomorphically trivial we get $\BC_{i}(\mathbb{I}_{3})=0$, for $i=1,2,3$.
Note that $C$ is a complex torus.
It follows that the holomorphic tangent bundle of $C$ is trivial and thus $\BC_{1}(C)=0$.
Due to \eqref{bc1}, the first Bott--Chern characteristic class of $\tilde{\mathbb{I}}_{3}$ has the form of
$\BC_{1}(\tilde{\mathbb{I}}_{3})=-[E]_{\mathrm{BC}}$,
where $E\cong\mathbb{P}(N_{C/\mathbb{I}_{3}})$ is the exceptional divisor of the blow-up.
By Proposition \ref{3-curve}, we have $\BC_{2}(\tilde{\mathbb{I}}_{3})=\pi^{\ast}[C]_{\mathrm{BC}}$ and $\BC_{3}(\tilde{\mathbb{I}}_{3})=0$.
We claim that $[C]_{\mathrm{BC}}$ is zero in $H^{2,2}_{BC}(\mathbb{I}_{3}, \mathbb{R})$.
According to the canonical isomorphism $H^{2,2}_{BC}(\mathbb{I}_{3}, \mathbb{R})\cong H^{2,2}_{BC}(\mathfrak{D}^{(=)}(\mathbb{I}_{3}, \mathbb{R}))$, there exists a $d$-closed differential form $\eta\in\Omega^{2,2}(\mathbb{I}_{3}, \mathbb{R})$ such that $\{\eta\}=[C]_{\mathrm{BC}}$.
Observe that $C=\{[0, 0, z_{3}]\in\mathbb{I}_{3}\, |\, z_{3}\in\mathbb{C} \}$.
It follows that $\{\eta\}$ belongs to the vector space generated by $\{\omega^{12\overline{1}\overline{2}}\}$.
A straightforward computation shows $\omega^{12\bar{1}\bar{2}}=-\partial^{\mathbb{I}_{3}}\overline{\partial}^{\mathbb{I}_{3}}(\omega^{3\bar{3}})$ and thus $\{\omega^{12\bar{1}\bar{2}}\}=0$ in $H^{2,2}_{BC}(\mathbb{I}_{3}, \mathbb{R})$.
As a result, we get $\BC_{2}(\tilde{\mathbb{I}}_{3})=0$.

\appendix
\section{Some formulae for Bott--Chern cohomology}\label{app}
Here we collect a number of key formulae on complex Bott--Chern cohomology, which are used in the paper.

Suppose $Y$ is a compact complex manifold of complex dimension $n$.
For any $z\in\mathbb{P}^{1}$, let $j_{z}:Y\times\{z\}\hookrightarrow Y\times\mathbb{P}^{1}$ be the inclusion  map.
It follows from the Bott--Chern projective bundle formula  (cf. \cite[Corollary 3.3]{YY23}) that the \emph{$\mathbb{P}^{1}$-homotopy principle} for Bott--Chern cohomology is valid, i.e., the morphism
$$
j^{\ast}_{z}:H^{\ast,\ast}_{BC}(Y\times\mathbb{P}^{1})\rightarrow H^{\ast,\ast}_{BC}(Y\times\{z\})\cong H^{\ast,\ast}_{BC}(Y)
$$
is independent from $z$.

Assume that  $i:Z\hookrightarrow Y$ is a smooth analytic hypersurface, then $Z$ is a divisor on $Y$.
Let $L$ be the associated holomorphic line bundle of $Z$ and $[Z]_{\mathrm{BC}}$ the Bott--Chern cohomology class represented by the current $[Z]$.
Let $\nabla$ be a connection on $L$, then the curvature matrix $\Theta$ is a $d$-closed, real $(1,1)$-form.
On the one hand, from definition, the first Bott--Chern characteristic class $\BC_{1}(L)$ is equal to $\{\frac{\sqrt{-1}}{2\pi}\Theta\}\in H^{1,1}_{BC}(Y,\mathbb{R})$.
On the other hand, a direct computation shows
$$
\frac{\sqrt{-1}}{2\pi}\int_{Y}\Theta\wedge\alpha=\int_{Z}i^{\ast}\alpha,
$$
for any $\alpha\in\Omega^{n-1,n-1}(Y)$.
This implies $\{\frac{\sqrt{-1}}{2\pi}\Theta\}=[Z]_{\mathrm{BC}}$ under the smoothing of Bott--Chern cohomology $H^{1,1}_{BC}(Y)\cong H^{1,1}_{BC}(\mathfrak{D}^{(=)}(Y))$ and therefore we get $\BC_{1}(L)=[Z]_{\mathrm{BC}}$.
In particular, we have the following \emph{self-intersection formula} for Bott--Chern cohomology and it's  proof is the same as that of \cite[Proposition 1]{Gri10} and \cite[Proposition 6.5.4]{Wu20}.

\begin{prop}\label{sel-int}
For any Bott--Chern cohomology class $\{\alpha\}\in H^{p,q}_{BC}(Z)$, the following identity is valid
$$
i^{\ast}i_{\ast}\{\alpha\}=\{\alpha\}\cdot \BC_{1}(N_{Z/Y}).
$$
\end{prop}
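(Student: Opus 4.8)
\emph{Plan of proof.} The strategy is the standard one for self-intersection formulae: deformation to the normal bundle reduces the statement to the case of the zero section of a line bundle, and on that model the formula is forced by the existence of a holomorphic retraction together with the special case $\{\alpha\}=1$; and that special case is nothing but the identity $\BC_{1}(\CO_{Y}(Z))=[Z]_{\mathrm{BC}}$ established just above. This is how the result is obtained in \cite{Gri10} and \cite{Wu20}, and it runs entirely on the deformation apparatus of Section \ref{sec4} (\eqref{bl-bl}, Figure \ref{def-nor-diagram}), now applied to the codimension-one inclusion $i:Z\hookrightarrow Y$: the center $Z\times\{\infty\}\subset Y\times\mathbb{P}^{1}$ has codimension two, $\widetilde{Y}=\mathrm{Bl}_{Z}Y=Y$, and the fibre of $q_{2}:W\to\mathbb{P}^{1}$ over $\infty$ is $Y\cup_{Z}\overline{N}$, where $\overline{N}:=\mathbb{P}(N_{Z/Y}\oplus\CO_{Z})$ is precisely the component $P$ of $W_{\infty}$.

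First I would dispose of the two easy ingredients. For the unit class $1\in H^{0,0}_{BC}(Z,\mathbb{R})$ one has $i_{\ast}1=[Z]_{\mathrm{BC}}$, whence $i^{\ast}i_{\ast}1=i^{\ast}\BC_{1}(\CO_{Y}(Z))=\BC_{1}(N_{Z/Y})$ by naturality (Proposition \ref{p-b-k}) and the isomorphism $i^{\ast}\CO_{Y}(Z)\cong N_{Z/Y}$; this is the asserted identity when $\{\alpha\}=1$. On the model $\overline{N}$, let $i_{0}:Z\hookrightarrow\overline{N}$ be the zero section and $\rho:\overline{N}\to Z$ the projection, so $\rho\circ i_{0}=\mathrm{id}_{Z}$ and $N_{Z/\overline{N}}\cong N_{Z/Y}$. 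Then for any $\{\alpha\}\in H^{p,q}_{BC}(Z,\mathbb{R})$ the class $\rho^{\ast}\{\alpha\}$ restricts along $i_{0}$ to $\{\alpha\}$, and the projection formula for the closed embedding $i_{0}$ (used already in \eqref{bar-imath-f-2}) gives
$$
i_{0}^{\ast}i_{0,\ast}\{\alpha\}=i_{0}^{\ast}\bigl(\rho^{\ast}\{\alpha\}\cdot i_{0,\ast}1\bigr)=\{\alpha\}\cdot i_{0}^{\ast}i_{0,\ast}1=\{\alpha\}\cdot\BC_{1}(N_{Z/Y}),
$$
the last equality being the case $\{\alpha\}=1$ applied on $\overline{N}$.

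It remains to transfer this from $(\overline{N},Z)$ to $(Y,Z)$, which I would do by mimicking the proof of Lemma \ref{key-lem}. Put $\Lambda:=l_{\ast}p_{Z}^{\ast}\{\alpha\}\in H^{p,q}_{BC}(W,\mathbb{R})$; by transversal base change along $W_{0}=Y$ (resp. along $P=\overline{N}\subset W_{\infty}$) one gets $j_{0}^{\ast}\Lambda=i_{\ast}\{\alpha\}$ (resp. $\hat{\jmath}^{\ast}\Lambda=i_{0,\ast}\{\alpha\}$), exactly as in \eqref{imath-f}--\eqref{barim-f}. Now both copies of $Z$ — the original $Z=Z\times\{0\}\subset Y=W_{0}$ and the zero section $i_{0}(Z)=Z\times\{\infty\}\subset\overline{N}\subset W_{\infty}$ — lie on the single submanifold $l(Z\times\mathbb{P}^{1})\cong Z\times\mathbb{P}^{1}$ of $W$, so the $\mathbb{P}^{1}$-homotopy principle (appendix) applied to $l^{\ast}\Lambda\in H^{\ast,\ast}_{BC}(Z\times\mathbb{P}^{1},\mathbb{R})$ yields
$$
i^{\ast}i_{\ast}\{\alpha\}=i^{\ast}j_{0}^{\ast}\Lambda=(l^{\ast}\Lambda)\big|_{Z\times\{0\}}=(l^{\ast}\Lambda)\big|_{Z\times\{\infty\}}=i_{0}^{\ast}\hat{\jmath}^{\ast}\Lambda=i_{0}^{\ast}i_{0,\ast}\{\alpha\}.
$$
(Alternatively one may push the Poincar\'{e}--Lelong equation \eqref{p-l-equ} on $W$ down by $q_{1,\ast}$ as in \eqref{p-l-equ-Y}, the extra component $Y\subset W_{\infty}$ contributing nothing since $\Lambda$ is supported on $Z\times\mathbb{P}^{1}$, which is disjoint from it.) Combining with the previous display completes the proof.

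The main obstacle is not the algebra but the well-definedness beneath it: $i^{\ast}$ makes no sense on an arbitrary current, so $i_{\ast}\{\alpha\}$ must first be realized by a differential form that is smooth near $Z$ before it can be restricted. On the model $\overline{N}$ the holomorphic retraction $\rho$ supplies such a representative for free — $\rho^{\ast}\{\alpha\}$ is already a $d$-closed global form extending $\alpha$ — which is exactly why the model computation is purely formal. For a general hypersurface there need be no holomorphic retraction onto $Z$ and no $d$-closed global extension of $\alpha$, and it is the deformation space $W$ that repairs this, the relevant smoothing being governed by the Poincar\'{e}--Lelong identity $[Z]=c_{1}(\CO_{Y}(Z),h)+\tfrac{\sqrt{-1}}{2\pi}\partial^{Y}\overline{\partial}^{Y}\log\|s\|_{h}^{2}$, now used on $W$ instead of on $Y$. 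Granting the transversal base change and naturality statements from Section \ref{sec4} (all contained in \cite{BSW23}), what remains is a diagram chase through Figure \ref{def-nor-diagram}.
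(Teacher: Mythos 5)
Your proposal is correct and follows essentially the same route as the paper, which gives no argument of its own but defers to \cite[Proposition 1]{Gri10} and \cite[Proposition 6.5.4]{Wu20}: those proofs are exactly the deformation-to-the-normal-bundle reduction you describe, settling the case $\{\alpha\}=1$ via $\hat{c}_{1}(\CO_{Y}(Z))=[Z]_{\mathrm{BC}}$, handling the zero section of $\mathbb{P}(N_{Z/Y}\oplus\CO_{Z})$ by the projection formula, and transferring back through $W$ and the $\mathbb{P}^{1}$-homotopy principle. The only caveat is attributional: the transversal base change for arbitrary Bott--Chern classes (as opposed to Chern characters of coherent sheaves) is established in \cite{Gri10, Wu20} rather than in \cite{BSW23}, but this does not affect the validity of the argument.
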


As a direct consequence of the self-intersection formula, for any $\{\alpha\}, \{\beta\} \in H^{p,q}_{BC}(Z)$,  we have
\begin{eqnarray*}
i_{\ast}\{\alpha\} \cdot i_{\ast}\{\beta\} &=&i_{\ast}(\{\alpha\}\cdot i^{\ast}i_{\ast}\{\beta\})\\
&=&i_{\ast}(\{\alpha\}\cdot \{\beta\} \cdot \BC_{1}(N_{Z/Y})).
\end{eqnarray*}
Here the first equality results from the projection formula.
\begin{rem}
In general, given a complex manifold $X$ with an irreducible $k$-codimensional analytic cycle $Z$ in $X$, there exists a natural injective morphism between the local integral and complex $(k,k)$-Bott--Chern cohomology groups of $X$ supported on $Z$:
$$
\epsilon:H^{k,k}_{BC,\,|Z|}(X, \mathbb{Z})\longrightarrow H^{k,k}_{BC,\,|Z|}(X, \mathbb{C}).
$$
The current of integrations on $Z$ represents a class $[Z]^{\mathbb{Z}}_{\mathrm{BC}}$ in $H^{k,k}_{BC,\,|Z|}(X, \mathbb{Z})$  and also a class $[Z]^{\mathbb{C}}_{\mathrm{BC}}$ in $H^{k,k}_{BC,\,|Z|}(X, \mathbb{C})$ satisfying
$
\epsilon([Z]^{\mathbb{Z}}_{\mathrm{BC}})=[Z]^{\mathbb{C}}_{\mathrm{BC}},
$
for details see \cite[\S\,6.5]{Wu20}.
Suppose we want to prove a formula involving complex Bott--Chern classes represented by irreducible analytic cycles, it is sufficient to verify the same formula for corresponding integral Bott--Chern cohomology classes since the natural morphism $\epsilon$ is injective.
\end{rem}

Now we assume that $\imath:X\hookrightarrow Y$ is a closed complex submanifold with codimension $\mathrm{codim}_{\mathbb{C}}\,X=r\geq 2$.
Recall the blow-up diagram:
\begin{equation*}
\vcenter{
\xymatrix@=1.0cm{
E \ar[d]_{\rho} \ar@{^{(}->}[r]^{\jmath} & \widetilde{Y}\ar[d]^{\pi}\\
 X \ar@{^{(}->}[r]^{\imath} & Y}
 }
\end{equation*}
As an application of Proposition \ref{sel-int}, we can establish the inverse of the morphism in the Bott--Chern blow-up formula \cite[Theorem 3.7]{YY23}.
\begin{prop}\label{b-l-f}
For any $0\leq p, q\leq n$, there exists a canonical isomorphism
\begin{equation*}
\Psi: H^{p,q}_{BC}(Y)\oplus\biggl[\bigoplus^{r-2}_{i=0}H^{p-i-1,q-i-1}_{BC}(X)\biggr]
\stackrel{\simeq}\longrightarrow H^{p,q}_{BC}(\widetilde{Y}),
\end{equation*}
where $\Psi=\pi^{\ast}+\sum^{r-2}_{i=0}\jmath_{\ast}[\zeta^{i}\wedge\rho^{\ast}(-)]$
and $\zeta=\BC_{1}(\CO_{E}(1))$.
\end{prop}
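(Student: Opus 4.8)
The plan is to exhibit an explicit left inverse of $\Psi$ and then conclude via the Bott--Chern blow-up formula \cite[Theorem 3.7]{YY23}. Define
$$\Phi\colon H^{p,q}_{BC}(\widetilde{Y})\longrightarrow H^{p,q}_{BC}(Y)\oplus\Bigl[\bigoplus_{i=0}^{r-2}H^{p-i-1,q-i-1}_{BC}(X)\Bigr]$$
by declaring its first component to be $\beta\mapsto\pi_{\ast}\beta$ and its $i$-th component to be $\beta\mapsto-c_{i+1}(\beta)$, where $\jmath^{\ast}\beta=\sum_{k=0}^{r-1}\zeta^{k}\rho^{\ast}c_{k}(\beta)$ is the unique expansion of $\jmath^{\ast}\beta\in H^{p,q}_{BC}(E)$ afforded by the Bott--Chern projective bundle formula \cite[Corollary 3.3]{YY23}, which realizes $H^{\bullet,\bullet}_{BC}(E)$ as the free $\rho^{\ast}H^{\bullet,\bullet}_{BC}(X)$-module on $1,\zeta,\dots,\zeta^{r-1}$. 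Since $\rho^{\ast}$ is injective this $\Phi$ is well defined, and a bidegree count gives $c_{k}(\beta)\in H^{p-k,q-k}_{BC}(X)$, so the target degrees match; the analogous bidegree count (using that the Gysin map $\jmath_{\ast}$ raises bidegree by $(1,1)$, as $E$ is a divisor) shows that $\Psi$ genuinely lands in $H^{p,q}_{BC}(\widetilde{Y})$.

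The main computation is $\Phi\circ\Psi=\mathrm{id}$. I would assemble the ingredients: the projection formula together with the commutative blow-up square, giving $\pi_{\ast}\pi^{\ast}=\mathrm{id}$ (as $\pi$ is a modification), $\pi_{\ast}\jmath_{\ast}=\imath_{\ast}\rho_{\ast}$ and $\jmath^{\ast}\pi^{\ast}=\rho^{\ast}\imath^{\ast}$; the self-intersection formula (Proposition \ref{sel-int}) for $\jmath\colon E\hookrightarrow\widetilde{Y}$, which reads $\jmath^{\ast}\jmath_{\ast}(\,\cdot\,)=-\zeta\cdot(\,\cdot\,)$ since $N_{E/\widetilde{Y}}\cong\CO_{E}(-1)$ and hence $\BC_{1}(N_{E/\widetilde{Y}})=-\zeta$; and the fibre-integration identities $\rho_{\ast}(\zeta^{j})=0$ for $0\le j\le r-2$ and $\rho_{\ast}(\zeta^{r-1})=1$ built into \cite[Corollary 3.3]{YY23}. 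Writing $\underline{\alpha}=(\alpha_{0},\dots,\alpha_{r-2})$ and $\Psi(\underline{\alpha})=\pi^{\ast}\alpha_{0}+\sum_{i=0}^{r-2}\jmath_{\ast}(\zeta^{i}\rho^{\ast}\alpha_{i})$, applying $\pi_{\ast}$ annihilates every $\jmath_{\ast}$-term because $\pi_{\ast}\jmath_{\ast}(\zeta^{i}\rho^{\ast}\alpha_{i})=\imath_{\ast}(\rho_{\ast}(\zeta^{i})\,\alpha_{i})=0$ for $i\le r-2$, leaving $\alpha_{0}$; and applying $\jmath^{\ast}$ yields $\jmath^{\ast}\Psi(\underline{\alpha})=\rho^{\ast}\imath^{\ast}\alpha_{0}-\sum_{i=0}^{r-2}\zeta^{i+1}\rho^{\ast}\alpha_{i}$, which is already in the module basis with $c_{0}=\imath^{\ast}\alpha_{0}$ and $c_{i+1}=-\alpha_{i}$, so the $i$-th component of $\Phi(\Psi(\underline{\alpha}))$ is $-c_{i+1}=\alpha_{i}$. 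Hence $\Phi\circ\Psi=\mathrm{id}$, so $\Psi$ is injective; and since \cite[Theorem 3.7]{YY23} guarantees that the source and target of $\Psi$ are finite dimensional of the same dimension, $\Psi$ is an isomorphism (and then $\Phi=\Psi^{-1}$). Canonicity is inherited from that of $\pi^{\ast}$, $\jmath_{\ast}$, $\rho^{\ast}$ and $\zeta$.

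The step I expect to demand the most care is a matter of conventions and signs rather than of ideas: one must verify that the values $\rho_{\ast}(\zeta^{j})$ and the relation $\sum_{k=0}^{r}\zeta^{k}\rho^{\ast}\BC_{r-k}(N)=0$ on $E=\mathbb{P}(N)$ used above are exactly those dictated by the tautological sequence $0\to\CO_{E}(-1)\to\rho^{\ast}N\to Q_{E}\to0$ (so that no spurious Segre-class corrections of $N$ creep in when extracting the coefficients $c_{k}$), and that the sign built into $\Phi$ is the one making the last display come out as $\alpha_{i}$ rather than $-\alpha_{i}$. Once those normalizations are pinned down, the remaining manipulations are the formal push--pull bookkeeping indicated above.
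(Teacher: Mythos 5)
Your proof is correct and follows essentially the same route as the paper: both arguments reduce to injectivity plus the dimension count supplied by \cite[Theorem 3.7]{YY23}, and establish injectivity by applying $\pi_{\ast}$ (equivalently, injectivity of $\pi^{\ast}$) to recover the $H^{p,q}_{BC}(Y)$-component and then $\jmath^{\ast}$ together with the self-intersection formula $\jmath^{\ast}\jmath_{\ast}=\BC_{1}(\CO_{E}(-1))\cdot(-)$ and the projective bundle formula to recover the $X$-components. Packaging this as an explicit left inverse $\Phi$ (and getting the sign $-\zeta$ right, which the paper's displayed computation glosses over) is only a cosmetic difference.
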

\begin{proof}
Note that $\Psi$ is a $\mathbb{C}$-linear map of complex vector spaces with the same finite dimension.
To prove the assertion, it is sufficient to show that $\Psi$ is injective.
Let $\{\alpha\}_{p,q}\in H^{p,q}_{BC}(Y)$ and $\{\alpha\}_{p-i-1,q-i-1}\in H^{p-i-1,q-i-1}_{BC}(X)$ for each $0\leq i\leq r-2$.
Assume that
$$\Psi\bigl(\{\alpha\}_{p,q}, \{\alpha\}_{p-1,q-1},\cdots,\{\alpha\}_{p-c+1,q-c+1}\bigr)=0,$$
then it follows from the definition of $\Psi$ that the identities
$\pi^{\ast}\{\alpha\}_{p,q}=0$ and
$$\jmath_{\ast}\biggl[\sum^{r-2}_{i=0}\zeta^{i}\wedge\rho^{\ast}\{\alpha\}_{p-i-1,q-i-1}\biggr]=0$$
 are valid.
As $\pi^{\ast}$ is injective, we get $\{\alpha\}_{p,q}=0$.
It remains to show $\{\alpha\}_{p-i-1,q-i-1}=0$ for all $0\leq i\leq c-2$.
According to the self-intersection formula in Proposition \ref{sel-int}, we get
\begin{eqnarray*}
0 &=& \jmath^{\ast}\jmath_{\ast}\biggl[\sum^{r-2}_{i=0}
  \zeta^{i}\wedge\rho^{\ast}\{\alpha\}_{p-i-1,q-i-1} \biggr] \\
   &=& \zeta\wedge\biggl[\sum^{r-2}_{i=0}
  \zeta^{i}\wedge\rho^{\ast}\{\alpha\}_{p-i-1,q-i-1} \biggr]\\
   &=& \sum^{r-1}_{l=1}
  \zeta^{l}\wedge\rho^{\ast}\{\alpha\}_{p-l,q-l}.
\end{eqnarray*}
As a direct consequence of the projective bundle formula for Bott--Chern cohomology, for any $1\leq l\leq c-1$, we have $\{\alpha\}_{p-l,q-l}=0$ and therefore $\Psi$ is injective.
\end{proof}

Consider the tautological exact sequence \eqref{tau-ex}:
\begin{equation*}
\xymatrix@C=0.5cm{
  0 \ar[r] &  \CO_{E}(-1) \ar[r]^{} & \rho^{\ast}N \ar[r]^{} & Q_{E}  \ar[r] & 0.}
\end{equation*}
Based on the self-intersection formula for hypersurface (Proposition \ref{sel-int}) and the blow-up formula (Proposition \ref{b-l-f}),
we can establish the following key formula called the \emph{formule-clef} for Bott--Chern cohomology.

\begin{prop}\label{for-cl}
For any Bott--Chern cohomology class $\{\alpha\}\in H^{p,q}_{BC}(X)$,  the identity
$$\pi^{\ast}\imath_{\ast}\{\alpha\}=\jmath_{\ast}(\rho^{\ast}\{\alpha\}\cdot\BC_{r-1}(Q_{E}))$$
holds in $H^{p+r,q+r}_{BC}(\widetilde{Y})$, where $0\leq p,q \leq n-r$.
\end{prop}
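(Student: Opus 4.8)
The plan is to verify the identity inside $H^{p+r,q+r}_{BC}(\widetilde{Y})$ by pinning down both sides within the blow-up decomposition of Proposition~\ref{b-l-f}, using $\pi_{\ast}$ to read off the ``$\pi^{\ast}$-part'' and $\jmath^{\ast}$ to read off the ``exceptional part''. Set $w:=\pi^{\ast}\imath_{\ast}\{\alpha\}-\jmath_{\ast}\bigl(\rho^{\ast}\{\alpha\}\cdot\BC_{r-1}(Q_{E})\bigr)$; the goal is $w=0$. By Proposition~\ref{b-l-f} we may write $w=\pi^{\ast}\{\beta_{0}\}+\sum_{i=0}^{r-2}\jmath_{\ast}\bigl(\zeta^{i}\cdot\rho^{\ast}\{\beta_{i+1}\}\bigr)$ for uniquely determined $\{\beta_{0}\}\in H^{\bullet,\bullet}_{BC}(Y)$ and $\{\beta_{i+1}\}\in H^{\bullet,\bullet}_{BC}(X)$, so it suffices to show every $\{\beta_{j}\}$ vanishes. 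Throughout I will use that, by the Whitney formula (Proposition~\ref{whitney}) applied to the tautological sequence~\eqref{tau-ex} and $\BC(\CO_{E}(-1))=1-\zeta$, one has $\BC(Q_{E})=\rho^{\ast}\BC(N)(1-\zeta)^{-1}$, whence $\BC_{r-1}(Q_{E})=\sum_{j=0}^{r-1}\zeta^{j}\rho^{\ast}\BC_{r-1-j}(N)$, and that $\rho^{\ast}\BC_{r}(N)=-\zeta\cdot\BC_{r-1}(Q_{E})$ since $\mathrm{rk}\,Q_{E}=r-1$ forces $\BC_{r}(Q_{E})=0$.

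First I would apply $\pi_{\ast}$. As $\pi$ is a bimeromorphic morphism, $\pi_{\ast}1=1$ and the projection formula gives $\pi_{\ast}\pi^{\ast}=\mathrm{id}$; as $\pi\circ\jmath=\imath\circ\rho$ we have $\pi_{\ast}\jmath_{\ast}=\imath_{\ast}\rho_{\ast}$. The projective bundle formula yields $\rho_{\ast}\zeta^{i}=0$ for $0\le i\le r-2$ and $\rho_{\ast}\zeta^{r-1}=1$, so $\rho_{\ast}\BC_{r-1}(Q_{E})=1$ and $\pi_{\ast}\jmath_{\ast}(\zeta^{i}\rho^{\ast}\{\beta_{i+1}\})=0$ for $0\le i\le r-2$. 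Hence $\pi_{\ast}w=\imath_{\ast}\{\alpha\}-\imath_{\ast}\bigl(\{\alpha\}\cdot\rho_{\ast}\BC_{r-1}(Q_{E})\bigr)=0$, while on the other hand $\pi_{\ast}w=\{\beta_{0}\}$. Therefore $\{\beta_{0}\}=0$ and $w=\sum_{i=0}^{r-2}\jmath_{\ast}(\zeta^{i}\rho^{\ast}\{\beta_{i+1}\})$.

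Next I would apply $\jmath^{\ast}$. Using $\jmath^{\ast}\pi^{\ast}=\rho^{\ast}\imath^{\ast}$ together with the self-intersection formula of Proposition~\ref{sel-int} for the divisor $E$ (with $N_{E/\widetilde{Y}}\cong\CO_{E}(-1)$, so $\jmath^{\ast}\jmath_{\ast}(-)=-\zeta\cdot(-)$), one obtains $\jmath^{\ast}w=\rho^{\ast}\bigl(\imath^{\ast}\imath_{\ast}\{\alpha\}\bigr)+\zeta\cdot\rho^{\ast}\{\alpha\}\cdot\BC_{r-1}(Q_{E})$. Feeding in the codimension-$r$ self-intersection formula $\imath^{\ast}\imath_{\ast}\{\alpha\}=\{\alpha\}\cdot\BC_{r}(N)$ and the relation $\rho^{\ast}\BC_{r}(N)=-\zeta\cdot\BC_{r-1}(Q_{E})$ noted above, the two terms cancel, so $\jmath^{\ast}w=0$. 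Finally, applying $\jmath^{\ast}$ to the surviving expression gives $\jmath^{\ast}w=-\sum_{l=1}^{r-1}\zeta^{l}\rho^{\ast}\{\beta_{l}\}$; since $H^{\bullet,\bullet}_{BC}(E)$ is a free $H^{\bullet,\bullet}_{BC}(X)$-module with basis $1,\zeta,\dots,\zeta^{r-1}$ (the projective bundle formula, \cite[Corollary~3.3]{YY23}) and $\rho^{\ast}$ is injective, we conclude $\{\beta_{l}\}=0$ for all $l$, hence $w=0$.

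The step I expect to be the real obstacle is the input $\imath^{\ast}\imath_{\ast}\{\alpha\}=\{\alpha\}\cdot\BC_{r}(N)$, i.e. the self-intersection formula in codimension $r\ge 2$, which Proposition~\ref{sel-int} supplies only for hypersurfaces. I would deduce it from the deformation to the normal bundle set up in Subsection~\ref{s-sec-4-1}, which reduces the computation of $\imath^{\ast}\imath_{\ast}$ to the zero-section embedding of $X$ in the total space of $N$; there $\imath_{\ast}1$ is represented, via the Koszul resolution of $\CO_{X}$ and the generalized Poincar\'{e}--Lelong formula (as in the \emph{Example} following Theorem~\ref{RR-domi}), by the top Bott--Chern form of $N$, and the general case follows by the projection formula. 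Once this structural fact is in place, everything else — the behaviour of $\pi_{\ast}$ and $\jmath^{\ast}$, the projection and Whitney formulae, and the projective-bundle computations of $\rho_{\ast}\zeta^{i}$ — is routine bookkeeping; this is the \emph{formule-clef} exactly in the spirit of \cite[Proposition~1]{Gri10} and \cite[Proposition~6.5.4]{Wu20}.
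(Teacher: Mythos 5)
Your reduction is clean and, as far as it goes, correct: the uniqueness in Proposition~\ref{b-l-f}, the computation $\pi_{\ast}w=\{\beta_{0}\}$ via $\pi_{\ast}1=1$, $\rho_{\ast}\zeta^{i}=0$ for $i\le r-2$, $\rho_{\ast}\hat{c}_{r-1}(Q_{E})=1$, and the $\jmath^{\ast}$-computation using Proposition~\ref{sel-int} for the hypersurface $E$ and the identity $\rho^{\ast}\hat{c}_{r}(N)=-\zeta\cdot\hat{c}_{r-1}(Q_{E})$ are all legitimate. Note, however, that the paper does not argue this way at all: it simply declares the proof to be that of \cite[Proposition 13]{Wu23} with integral coefficients replaced by complex ones, so you have produced an argument where the paper offers only a citation.

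The genuine gap is the input $\imath^{\ast}\imath_{\ast}\{\alpha\}=\{\alpha\}\cdot\hat{c}_{r}(N)$ in codimension $r\ge 2$. Proposition~\ref{sel-int} supplies this only for hypersurfaces, and your own computation shows the issue is not a side condition but the whole theorem relocated: since $\rho^{\ast}$ is injective by the projective bundle formula, the vanishing of $\jmath^{\ast}w$ (given that $\{\beta_0\}=0$ is already forced by $\pi_\ast$) is \emph{equivalent} to $\rho^{\ast}\imath^{\ast}\imath_{\ast}\{\alpha\}=\rho^{\ast}(\{\alpha\}\cdot\hat{c}_{r}(N))$, i.e.\ to the higher-codimension self-intersection formula. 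So your proof establishes an equivalence between the formule-clef and that formula rather than a proof of either, and calling the rest ``routine bookkeeping'' undersells where the content sits. Your sketch of the remaining piece (deformation to the normal bundle, Koszul resolution of the zero section, Poincar\'e--Lelong) is the right idea, but the reduction of $\imath^{\ast}\imath_{\ast}$ to the zero-section embedding is delicate in Bott--Chern cohomology: Lemma~\ref{key-lem} controls pushforwards to $Y$, whereas here one must control pullbacks to $X$ along the special fibres of $q_{2}$, which needs a separate base-change/transversality argument for currents (this is exactly what \cite[\S 6.5]{Wu20} and \cite{Gri10} carry out, and one must also make sure that reference does not itself derive the self-intersection formula \emph{from} the formule-clef, on pain of circularity). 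Either carry that argument out, or cite the general self-intersection formula explicitly as an external input; as written, the proof is incomplete at its only non-formal step.
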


The formule-clef for integral Bott--Chern cohomology was proved by Wu \cite[Proposition 13]{Wu23} and the proof of the formula above is identical with the proof of \cite[Proposition 13]{Wu23} by replacing the integral Bott--Chern cohomology with the complex Bott--Chern cohomology throughout.

\begin{rem}
It was Grothendieck who first conjectured the self-intersection formula and the formule-clef for Chow rings of non-singular quasi-projective varieties.
In \cite{LMS75}, using the construction of deformation to the normal cone, Lascu--Mumford--Scott gave an elementary proof of these formulae.
By use of deformation to the normal bundle for complex manifolds, their analogues in Deligne cohomology and integral Bott--Chern cohomology are established in \cite{Gri10} and \cite{Wu20}, respectively.
\end{rem}


\end{document}